\documentclass[10pt]{article}
\usepackage{amsfonts,amssymb,pdfpages, xcolor}

 \usepackage{amsmath}
\usepackage{amssymb}
\usepackage{indentfirst}
\usepackage{geometry}
\usepackage{amsfonts}
\usepackage{hyperref}

\setcounter{MaxMatrixCols}{10}
\setcounter{MaxMatrixCols}{10}

\newtheorem{theorem}{Theorem}[section] 

\newtheorem{lemma}[theorem]{Lemma}
\newtheorem{proposition}[theorem]{Proposition}
\newtheorem{corollary}[theorem]{Corollary}

\newtheorem{definition}[theorem]{Definition}

\newtheorem{remark}[theorem]{Remark}

\setcounter{equation}{0}
\newcommand{\N}{{\mathbb{N}}}   


\newcommand{\R}{{\mathbb{R}}}   
\newcommand{\Q}{{\mathbb{Q}}}   
\newcommand{\C}{{\mathbb{C}}}   

\newcommand{\OK}{{\overline{{\mathbb K}}}}
\newcommand{\OR}{{\overline{{\mathbb R}}}}


\newcommand{\A}{{\mathcal{A}}}


\newcommand{\qed}{\enspace\vrule  height6pt  width4pt  depth2pt}
\newenvironment{proof}{\par\noindent{\bf Proof.}}{$\qed$\par\bigskip}

\begin{document}

\title{Transition Probabilities and Almost Periodic Functions\thanks{  Mathematics subject Classification:  Primary
[$30F40$, $20M25$]; Secondary [$20H10$,  $20N05$,  $17D05$,  $16S34$].   Keywords and phrases:  generalized, probability, transition, support, Fock space, almost periodic function.  }}
  
\author{   Juriaans, S. O.   \and  Queiroz, P.C.\footnote{IME-USP and UFMA, ostanley@usp.br, pqueiroz@ime.usp.br}}

\date{}

\maketitle

\begin{abstract} We study transition probabilities of generalized functions as introduced by Colombeau and Gsponer.  We   formally introduz the study of H. Bohr almost periodic functions in the generalized context and use them to give exact values of transition probabilities in dimension $\ 2$. We also  prove the existence of transition probabilities,  in the generalized sense,    for any  moderate net $\ T = (T_{\varepsilon})$ with $\ T_{\varepsilon} : \mathbb{H}\longrightarrow   \mathbb{H}$,  $\mathbb{H}$ a Hilbert space.  In particular,   we consider  the case of selfadjoint Hilbert-Schmidt operators. 

This alludes to the possibility of existence of transition probabilities  in the  Fock space, in the sense introduced by Colombeau and Gsponer,    if the spectrum of the operators involved are pure infinities or infinitesimals. This was already indicated in  two   papers  coauthored by J.  Aragona and  J.F. Colombeau  et. al.
  
   \end{abstract}

\section{Introduction}

Schwartz's Theory of Distributions is one of his major contribution to the Theory of Generalized Functions. It marked major advances in the field of P.D.E.'s. Being  a linear environment, it does not  deal with problems involving products of distributions. Although there still exist the  believe that Schwartz's Impossibility Theorem alludes to the non-existence of an associative diferencial  milieu in which these products can be dealt with, this is certainly not the case. In fact, such environments do exists and have more than proven their consistency and importance (see \cite{Colom1, Colom2, Colom3, Colom4, Colom5, Colom6, Colom7, Colom8, Colom9, gkos1, gkos2, gkos3, gkos4, hosk, ku, ku1, ku2, ku3, ku4, mog, Ober, ros1, ros2, ros3, ros4, ros5}).    It was in  the eighties that  Rossinger and Colombeau undertook the challenge and started to  develop  non-linear theories in which the  multiplication of distributions make sense. Important researchers such as Aragona, Biagioni, Egorov,  Grosser,  Kunzinger, Oberguggenberger,  Pilipovic,  Scarpal\'{e}zos, Steinbauer  and  Vickers, to name a few,  made possible the development of  the algebraic and topological theory of these new generalized environments.    The algebraic developments were key in permitting to view the real potential of the generalized milieu. In particular, the boolean  algebra  $\ {\mathcal{B}}(\overline{\mathbb{K}})$ of  $\ \overline{\mathbb{K}}$ is key in this development together with the group of invertible elements $\ Inv(\overline{\mathbb{K}})$. The latter is open and dense in $\ \overline{\mathbb{K}}$ and its complement consists of zero divisors each of which has a nontrivial element of $\ {\mathcal{B}}(\overline{\mathbb{K}})$  in its annihilator, i.e., given a non unit $\ x \in \overline{\mathbb{K}}$ there exists a nontrivial  idempotent $\ e\in {\mathcal{B}}(\overline{\mathbb{K}})$ such that $e\cdot x =0$. It also holds that an element $\ x\in \overline{\mathbb{K}}$ is a unit if and only if  there exists $\ n\in \mathbb{N}$ such that $ \ x\geq \alpha^n$, where $\ \alpha =[\varepsilon\longrightarrow \varepsilon]$ is the natural, or standard,  gauge of $\ \overline{\mathbb{K}}$. Each idempotent is uniquely linked to a certain kind of subset in the parameter space defining these generalized milieu. The idempotents  are crucial and important in the definition of the notion of interleaving (see \cite{OV}), the existence of roots for generalized polinomial functions (see \cite{horman1, ver2}) and the notion of the support and the existence of an atlas for a  generalized differential manifolds (see \cite{juroliv}) as we shall see next. The connection with non standard Analysis (see \cite{rob}) is clear but in a very different way than expected (see \cite{walter}).

These   developments led to the introduction of a generalized milieu  which extends in a natural way the classical Newtonian Calculus. This is called {\it Generalized Differential Calculus} (see \cite{AraFerJu, OJRO, afj2, afj1, AJ, juroliv, inter}). Many Classical results do hold in this generalized environment. For example, one has a Generalized Fixed Point Theorem which is useful when proving existence of solution for equations in the the generalized environment (see \cite{juroliv}).

  Generalized Differential Calculus is extended in the following way. Given a classical differential manifold $\ M$ there exist a Generalized Differential  Manifold $\ M^*$ into which $ \ M$ embeds discretely. Problems involving distributions on $ \ M$ translates naturally into questions involving $\ C^{\infty}$ functions on $\ M^*$. These new environments are as natural as the classical ones and several classical results have an analogue in  the general environment making it possible to prove most results intrinsically. But they have a feature which exists in the classical environments: infinitesimals and infinities coexists in these milieu and effectively effects physical reality  although going undetected by this physical reality (see \cite{juroliv}).
  
  Another  important notion,  introduce recently in \cite{juroliv}, is the notion of {\it supports of generalized objects}. For genereralized functions there  is such a notion but it refers more to the domains of the functions. Here the idea is to look at compactly supported elements or elements which, when multiplied by an idempotent, becomes compactly supported and are in the halo of point of physical reality.

Colombeau and Gsponer, see \cite{Colom8, sponer1}, introduced the notion  of transition probabilities in their attempt to establish a theory to deal with certain problems in QFT.   Since then, transition probabilities were studied in   \cite{Varna,  transition, sponer1,  sponer} among to carry out the blue print layout in   \cite{Colom8, sponer1}.  Hopefully this will turnout to be another major contribution of J.F. Colombeau in the understanding of physical reality.  In \cite{unpublish, transition} progress was made and in \cite{transition} some questions raised remained unanswered. The existence of transition probabilities of $\ 2\times 2$ matrices were simulated using MatLab indicating their existence and making progress in the pursue of the ideas of Colombeau and Gsponer. In \cite{unpublish} light is shed on the general case showing that it is feasible to carry out the proposed blue print mentioned before. In this manuscript, we first generalized the notion of transition probability introducing the {\it generalized transition probability} and proving the existence of the latter for any symmetric matrix in $\ M_n(\overline{\mathbb{C}})$. We first prove this for elements of $\ M_n(\overline{\mathbb{C}}_{as})$ and then go on to prove this in the general case. We revisit the examples given in \cite{transition} and give explicit calculation of the values involved. We also look at compactly supported symmetric operators acting on Hilbert spaces and establish results analogous  to the finite dimensional case.   

The manuscript  is organized   as follows.  In the next section,  the basics  of the Theory of Generalized Functions and Generalized Differential Calculus  is recalled to give the reader a good overview  of the status quo. Notation is recalled and established aiming to  facilitate  the reading of the manuscript. Next,  the basics of almost periodic functions is collected and  extended   to the generalized environment. In particular,  an a.p.f. may become   a periodic function in the generalized environment. In the fourth section,    the basics of Fock Space and the generalized context  constructed by Colombeau and Gsponer are recalled. The  {\it generalized transition probability}  is defined having the original definition as a particular case. In section 5,   generalized transition probabilities of finite matrices is studied. The focus is on compactly supported matrices.  It is  proved that in this case  the generalized transition probability always exists and its   support is calculated.   The infinite dimensional case is considered, in particular,   Hilbert-Schmidt operators and compactly supported self-adjoint operators are considered. In the last section,  the examples given in \cite{Varna, transition}  are revisited and  the exact value of all the transition probabilities are given. A    satisfactory solution is given in the finite dimensional case.  The infinite dimensional case is revisited and some results about the generalized transition probability are proved in this case.


\section{Preliminaries}

We refer the reader to the following excellent texts  on the Theory of Colombeau Generalized Functions: \cite{AraBia,  Colom1, Colom2, Colom3,   Colom5, Colom6, Colom7, Colom8, Egorov, ku, Nedel, mog, Ober, OV,  S2, tod, todver, V1, V2, V3}. For the algebraic part of the  theory we refer the reader  to \cite{agj, AJ, JJO, horman1, ku, ver, ver2}.  The reader interested in the foundations of the Generalized Differential Calculus we refer to \cite{AraFerJu, OJRO, juroliv}. The topological foundation of the theory can be found in \cite{afj2, afj1, AJ, Bia, S1, S2, S3}. For applications,    more on the theory and physics we refer the reader to \cite{antos, agj, agj1,  AS,  chris, spjornal,  walter1,   gkos1, gkos2, gkos3, gkos4, hosk, kastler, wolfram}.  Here we shall recall part of the theory in the form and sequel that we shall be using it in this paper.

Given a field $\ \mathbb{K}\in \{\mathbb{R},\mathbb{C}\}$ consider the product space of nets  $\ \mathbb{K}^I$, where $\ I=]0,1]$.   We call $\ \alpha =[\varepsilon\longrightarrow \varepsilon]$  the natural gauge and  if $x,y\in  \ \mathbb{R}^I$ we write $\ x\ \leq y$ iff there exists $\ \eta>0\in I$ such that $x(\varepsilon)\leq y(\varepsilon), \ \forall\ \varepsilon\in ]0,\eta]$. The set of moderate nets of  $\ \mathbb{K}^I$  is $\ \mathcal{E}_M(\mathbb{K}) =\{x\in  \ \mathbb{K}^I\ :\ $ there exists $\ n\in \mathbb{Z}$ such that $ |x|\leq \alpha^n\}$, where $|x|(\varepsilon)=|x(\varepsilon)|$.  It is clear that $\ \mathcal{E}_M(\mathbb{K})$ is a subring of $\ \mathbb{K}^I$ and that $\mathcal{N}(\mathbb{K})$ $=\{x\in  \mathcal{E}_M(\mathbb{K})\ : |x|\leq \alpha^q, \forall q\in \mathbb{N}\}$ is an ideal of this subring. The Colombeau ring of generalized numbers over $\ \mathbb{K}$ is the quotient ring $\ \overline{\mathbb{K}}= \mathcal{E}_M(\mathbb{K})/\mathcal{N}(\mathbb{K})$. It holds that $\ \overline{\mathbb{C}}= \ \overline{\mathbb{R}}+i\ \overline{\mathbb{R}}$, $i^2=-1$. The ring of Colombeau Generalized real numbers $\ \overline{\mathbb{R}}$ will be the basic under lying structure for our generalized differential Calculus, Geometry and Analysis. It is an ultrametric topological ring. This  topology is defined in \cite{Bia, S1, S2, S3} and was coined the {\it  sharp topology}. The norm of an element $\ x\in \overline{\mathbb{R}}$ is $\ \|x\|=e^{-V(x)}$, where $\ V(x)$ is the supremum  of the set $\ \{r\in \mathbb{R}\ :\ |x|\leq \alpha^r\}$.  It is interesting to note that the classical equality $\ H^2 =H$ does not hold  in the generalised milieu, i.e., the Heaviside Function $\ H$ is not an idempotent! The latter is a statement about the perception of our physical reality.

  The algebraic properties of this ring were first investigated in \cite{AJ}. For examples, all maximal ideals of $\ \overline{\mathbb{R}}$ where described, the boolean algebra $\mathcal{B}(\overline{\mathbb{R}})$  was described and proved that   its group of invertible elements $\ Inv(\overline{\mathbb{R}})$  is open and dense in the sharp topology. The elements of $\mathcal{B}(\overline{\mathbb{R}})$ are elegantly linked  to subsets of the parameter space $\ I$ (see  \cite{AJ, walter}). Moreover, if $\ 0\not = x \in \overline{\mathbb{K}}$ then there exists $\  e\in \mathcal{B}(\overline{\mathbb{R}})$ such that $ \ ex\in Inv(e\overline{\mathbb{R}})$. Note  that if   $\ e,f\in \mathcal{B}(\overline{\mathbb{R}})$ are such that $f<e$ then $\ ef =f$. An excellent reference is  \cite{ver} (see also \cite{JJO, walter}).

  In \cite{OJRO, afj2, afj1} a basis for this topology is given that is compatible with the algebraic structure. A basic neighbourhood is $\ V_r(0)=\{ x\in \overline{\mathbb{R}}\ : \ |x|< \alpha^r\}$, $ r>0$. An $\ x\in V_1(0)$ is such that $\ |x|<s,\forall\ s\in \mathbb{R}^+$. On the other hand, $\alpha^{-1}>s, \forall \ s\in \mathbb{R}$. Consequently, in this new milieu infinitesimals and infinities coexist and can cancel out when multiplied.

Given $\Omega\subset \mathbb{K}^n$, let  $\widetilde{\Omega}_c  = \{  x= (x_1 , \cdots ,x_n) \in \overline{\mathbb{K}}^n \ : \ \exists \  \eta >0 , \{ \hat{x}(\epsilon) \ :\ \epsilon <\eta\} \subset\subset \Omega\}$, where $\ \subset\subset$ means that the set has compact topological closure. One can prove that $\ \Omega $ is discretely embedded in $\ \widetilde{\Omega}_c  $. This is one of the most important definitions in the recent developments of the Theory and was given by Kuzinger-Oberguggenberger (see \cite{ku, ko}). The importance of this definition is the following results proved by them: {\it A Colombeau Generalized Function is zero if and only if its point values on $\ \widetilde{\Omega}_c $ are all zero in $\ \overline{\mathbb{R}}$} (see \cite{ku, ko}). This actually translate into having a natural  candidate for a domain for a generalized functions defined on $\ \Omega$. Together with the algebraic developments, this is one of the stepping stones for our Generalized Differential Calculus (see \cite{AraFerJu}).

  A function  $f\in \mathcal{F}(\widetilde{\Omega}_c, \overline{\mathbb{K}})$ is said to be differentiable   if there exists $z_0\in \overline{\mathbb{K}}$ such that  $$\lim_{x\to x_0}\frac{f(x)-f(x_0)-z_0(x-x_0)}{\alpha_{-\log(\|x-x_0\|)}}=0$$ 
  
  \noindent Denoting this limit by $f^{\prime}(x_0 )$,  one shows that this is a derivation on   $\mathcal{F}(\widetilde{\Omega}_c, \overline{\mathbb{K}})$  which satisfies almost all properties of classical differential calculus and we have the following embedding theorem. This theorem is a complete solution to the apparent Schwartz Impossibility Paradox in the sense of Newtonian  Calculus.

\begin{theorem} [Aragona-Fernadez-Juriaans]  There exists a continuous    $\ \mathbb{K}-$ linear injection   
$\kappa:D^{\prime}\to C^\infty(\widetilde{\Omega}_c,\overline{\mathbb{K}})$   such that    $\kappa\Big(\frac{\partial f}{\partial x_i}\Big)= \frac{\partial(\kappa(f))}{\partial x_i}, ~\forall~ f \in D^{\prime}, \forall i$.
\end{theorem}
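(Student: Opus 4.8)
The plan is to realize $\kappa$ by mollification and then read off its values on generalized points via the Kunzinger--Oberguggenberger characterization recalled above. First I would fix a mollifier $\phi\in\mathcal{S}(\mathbb{R}^n)$ with $\int\phi=1$ and all higher moments vanishing, put $\phi_\varepsilon(x)=\varepsilon^{-n}\phi(x/\varepsilon)$, and for $T\in D'$ form the net $(T*\phi_\varepsilon)_\varepsilon$ of smooth functions (using a cutoff/partition of unity so that the convolution is defined near each compact subset of $\Omega$). Invoking the structure theorem for distributions---locally $T$ is a finite-order derivative of a continuous function---together with the seminorm growth of $\phi_\varepsilon$, one checks that this net is moderate, so it defines a class $\kappa(T)$. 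For a compactly supported generalized point $x=[\hat{x}(\varepsilon)]\in\widetilde{\Omega}_c$ I would then set $\kappa(T)(x)=\big[(T*\phi_\varepsilon)(\hat{x}(\varepsilon))\big]\in\overline{\mathbb{K}}$; since $x$ eventually lies in a compact subset of $\Omega$, moderateness and the smoothness of each $T*\phi_\varepsilon$ guarantee that this value is independent of the chosen representatives, so $\kappa(T)\in C^\infty(\widetilde{\Omega}_c,\overline{\mathbb{K}})$.

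Linearity of $\kappa$ is immediate from the linearity of convolution and of the quotient maps. For the intertwining with derivatives I would use the identity $\partial_i(T*\phi_\varepsilon)=(\partial_i T)*\phi_\varepsilon$ at the level of smooth representatives, and then verify that the generalized derivative defined just above (via the limit quotient) applied to $\kappa(T)$ reproduces the class of the representative-wise classical partial derivatives; this reconciliation is a Taylor-expansion computation on each representative and yields $\kappa(\partial_i T)=\partial_i(\kappa(T))$. Continuity would follow by estimating the sharp valuation $V(\kappa(T)(x))$ in terms of distributional seminorms, moderateness providing uniform control over compact families of generalized points.

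The main obstacle is injectivity. Suppose $\kappa(T)=0$, i.e. $(T*\phi_\varepsilon)(\hat{x}(\varepsilon))$ is negligible for every $x\in\widetilde{\Omega}_c$. By the Kunzinger--Oberguggenberger theorem recalled above, a generalized function vanishing at all compactly supported generalized points is the zero class, so the net $(T*\phi_\varepsilon)$ is negligible: $\sup_K|T*\phi_\varepsilon|=O(\varepsilon^q)$ for every compact $K\subset\Omega$ and every $q$. Testing against $\psi\in D(\Omega)$ and using the standard mollification convergence $\langle T*\phi_\varepsilon,\psi\rangle\to\langle T,\psi\rangle$ (where the vanishing-moment conditions on $\phi$ ensure this holds for distributions), negligibility forces $\langle T,\psi\rangle=0$ for all $\psi$, whence $T=0$. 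The delicate bookkeeping is precisely the passage between negligibility tested on generalized points and negligibility as a net, which is where the Kunzinger--Oberguggenberger characterization does the essential work; a secondary technical point is choosing $\phi$ so that $\kappa$ is also consistent with the canonical embedding of $C^\infty$, ensuring the map is genuinely injective rather than merely nonzero.
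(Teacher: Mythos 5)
The paper does not prove this theorem: it is recalled verbatim from the cited reference \cite{AraFerJu} as a known result, so there is no in-paper proof to compare against. Your outline is the standard construction from that literature and is essentially correct as a sketch: embed $D'$ by convolution with a mollifier $\phi_\varepsilon$ (with vanishing higher moments, and a cutoff on general $\Omega$), get moderateness from the local structure theorem, evaluate at compactly supported generalized points, and obtain injectivity by combining the Kunzinger--Oberguggenberger point-value characterization (to pass from vanishing of all values on $\widetilde{\Omega}_c$ to negligibility of the net) with the weak convergence $T*\phi_\varepsilon\to T$. Two remarks. First, the genuinely hard step --- and the actual content of the Aragona--Fernandez--Juriaans paper --- is showing that the point-value map $x\mapsto[(T*\phi_\varepsilon)(\hat{x}(\varepsilon))]$ is $C^\infty$ in the sense of the sharp-topology derivative defined just before the theorem statement (the limit quotient with denominator $\alpha_{-\log\|x-x_0\|}$); you gesture at this as ``a Taylor-expansion computation'' but it is not a routine verification, since one must control the remainder in the sharp valuation uniformly over generalized increments, and this is where most of the work lives. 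Second, a small inaccuracy: the convergence $\langle T*\phi_\varepsilon,\psi\rangle\to\langle T,\psi\rangle$ holds for any approximate identity and does not need the vanishing-moment conditions; those are needed only so that $\kappa$ restricted to $C^\infty(\Omega)$ agrees with the canonical (constant-net) embedding, which is a consistency issue rather than an injectivity issue.
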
       

Generalized Calculus is further developed in \cite{OJRO} and is the basis for the Generalized Differential Geometry and a Fixed Point Theorem in this generalized milieu (see \cite{juroliv}).

The subset $\ \OK_0 := \{x\in \OK \ | \  x\approx 0\} $ is a subring of $\OK$ containing  the open ball $B_1(0)$ properly (see \cite[Proposition 2.1]{AJ}).  More generally, $\OK_{as} := \{x\in \OK \ | \  x\approx x_0\in \mathbb{R} \} $ is a subring of $\OK$ containing the open ball $B_1(0)$. In this paper we consider matrices over $\OK_{as}$ and consider also the case when $\  \lim\limits_{\varepsilon \longrightarrow 0}\hat{x}(\varepsilon) =\infty$,  coining the name {\it pure infinities} to such elements.  Our standard reference for matrices over $\OK$ will be  \cite{horman1, ver2}. It should be noted however that this subject was first studied in \cite{mayer, ver2}. The notion of eigenvalues and eigenvectors is as defined in  \cite{horman1}. Examples in \cite{horman1} show that one must be careful in the generalized environment when defining these notions.  For the readers sake,  we recall some  basics needed in  the sequel.   

A vector $(v_1, \cdots ,v_n) =v\in \OK^n$ is a free-vector if $\ \ span_{\OK}[v_1, \cdots ,v_n] = \OK \ $ and a generalized number $\lambda \in \OK$ is an eigenvalue of an $n\times n$ matrix $A\in M_n(\OK)$ if there exists a free-vector $v\in \OK^n$ such that $A\cdot v = \lambda v$. In this case  one has that $det(\lambda I - A)=0\in\OK$,    $Ker(\lambda I - A)$ is non-trivial  and contains a free vector.  Note that $v\in \OK^n$ is a free-vector if and only if $\|v\| \in Inv(\OR)$.

  We now recall the definition of the support of a generalized object given in \cite{juroliv} (see \cite{juroliv} for more details). It can be defined in any generalized mulieu, in special if the nets involved are membrane like (see \cite{OJRO} for the definition of a membrane). For  $p=[(p_{\varepsilon})]\in\overline{\mathbb{C}}^n$,  consider   the set $\{q\in \mathbb{C}^n\ : \exists\  \varepsilon_n\rightarrow 0,  \ p_{\varepsilon_n}\rightarrow q\}$.  Algebraically this can be written as:  Given  $q_0\in \mathbb{C}^n$, we have that   $\ q_0\in \{q\in \mathbb{C}^n\ : \exists\  \varepsilon_n\rightarrow 0,  \ p_{\varepsilon_n}\rightarrow q\}$ if and only if there exists $e\in {\cal{B}}(\overline{\mathbb{R}})$ such that $e\cdot p\approx e\cdot q_0$ (extending the notion of association to $\overline{\mathbb{K}}^n$ in the obvious way). This is a compact subset of $\ \mathbb{C}^n$ to which  we  shall refer to  as the {\it{support of the point}} $p$ and denote  it  by $supp(p)$. It follows that there exists a complete set of orthogonal  idempotents  $(e_{\lambda})$ such that 

$$p\ =\sum\limits_{x_{\lambda}\in supp(p)}e_{\lambda}\cdot p  $$ $$ (e_{\lambda}\cdot p \approx e_{\lambda}\cdot x_{\lambda})$$

This definition can be extended to any generalized milieu and we shall do so here in this manuscript. It can obviously be extended to include $\ \infty$ in the case of generalized numbers.  A generalized number $\ \lambda$ is an {\it infinity} if $\ \infty \in supp(\lambda)$ and is a {\it pure infinity} if $\ supp(\lambda)=\{\infty\}$.

The notion of interleaving given by Oberguggenberg-Vernaeve (see \cite{OV}) once again shows the importance of the elements of $\ \mathcal{B}(\overline{\mathbb{K}})$. The interested reader can read more about this in several sections of \cite{juroliv}. This notion plays an important role in defining a generalized manifold starting with a classical manifold and embedding the latter discretely into the former. For the notion of {\it membrane} and its generalization, {\it  internal sets}, we refer the reader to \cite{OJRO, OV}.


\section{Almost Periodic Functions}

In this section we shall prove the existence of mean values for some functions. It is here also that we link the transition probabilities with the mean value of a almost periodic function.   We start recalling the concept of {\it almost periodic  function}.  The basic references we used are \cite{Besi, Bohr}. This remarkable theory seems to be have conceived to merge naturally  into the Theory of Generalized Differential Calculus.  One of the main things of this section is the fact that an almost periodic function becomes a periodic function in the generalized context.  After  recalling the definition of an almost periodic function we  prove some useful ways to calcule their mean value and then show their link to transition probabilities.

A set of real numbers $E\subset \R$ is said to be  {\it  relatively dense} in $\ \mathbb{R}$  if there exists $L > 0$ such that for each $a \in \R$ one has that $] a, a+L [ \cap E \neq \emptyset$. Roughly speaking,  this means that there are no arbitrary large gaps among the elements of $\ E$.

\begin{definition}
Given a complex valued function $f: \R \longrightarrow \C$ and $\varepsilon > 0$,  the real number $\tau =\tau(\varepsilon) =\tau_f(\varepsilon)  > 0$  is called a translation number of $f$ corresponding to $\varepsilon$  if $\ |f(x +\tau) - f(x)|\leq \varepsilon, \forall x\in \R$.       A continuous function $f:\R \longrightarrow \C$ is said to be {\it almost periodic} ( an a.p.f.)  if  for each $\varepsilon>0$ the set  $\{ \tau \in \R \ | \tau = \tau_f(\varepsilon)\}$ is relatively dense.
\end{definition}

Consider the $\C$-vector space $V = Span_{\C}[ e^{i\lambda x}\ | \lambda \in \R]$ of trigonometric polynomials or pure vibrations which include the harmonic vibrations which are the generating set of the Wiener algebra.   The elements of $V$ are bounded functions and thus we may consider the normed space $(V, ||\ ||_{\infty})$.  Let $\overline{V}$ be its closure in $L_{\infty}(\R)$. Thus $\overline{V}$ consists of uniform limits of elements of $\ V$.  Note that the elements of $\ V$ are periodic functions and thus the set of periodic functions is dense in $\overline{V}$. 

With an {\it elementary step function} we mean the characteristic function of an interval $I\subset \R$.   A {\it step function} is a finite $\R$-linear combination of elementary step functions.  Since one clearly has that   $M(f( x + c)) = M(f)$, for any   constant $c\in \R$ and $M(f(\lambda x)) = M(f)$, for any $\lambda > 0$,  we way consider elementary step functions defined on an interval of the form $[0, \beta]$ with $\beta <1$.   We say that $f$ is an elementary periodic step function if $f$ has period $1$  and on $[0, 1]$ $f$ is the characteristic function of an interval $[0, \beta]$.

If $\  f$ is periodic of period $\ L$ then we may approximate $\ f$ uniformly by a sequence of step functions on $\ [0, L]$. Hence  $\ f $ can be approximated uniformly  by  periodic  step functions on $\ \R$.  It follows that an a.p.f. is a uniform limit of step functions on $\R$. So if we denote by $ S$ the set of periodic step functions then we have that $\overline{V}\subset \overline{S}$.  If $\ A\in M_n({\overline{\mathbb{C}}})$ is a hermitian matrix then $\  exp(iA)$ is a   unitary  matrix and hence its   eigenvalues   are of the form $\ exp(i\theta),  \theta \in Spec(A)\subset  {\overline{\mathbb{R}}}$. Since we are interested in transition probabilities, which include such expressions, we look at the general situation.  Let $[f_{\varepsilon}]=f\in \overline{\mathbb{R}}$  be integrable when seen as a function $\ f\  : \ I\longrightarrow \mathbb{R}$. Define the {\it generalized transition probability}  of $\ f$ to be 

$$ \nu(f) = \frac{1}{\alpha}\int\limits_0^\alpha f(t)dt =\bigg[\varepsilon \longrightarrow \frac{1}{\varepsilon}\int\limits_0^{\varepsilon}f(t)dt\bigg]$$

\noindent   In particular, if$\ f\approx f_0\in \mathbb{R}$ then $\nu(f) = f_0$, as can be easily seen. It will also follow from a result of this section.  This particular case  explains our interest in almost periodic functions and their mean value  and thus  linking  all sections of this manuscript with the concept of {\i transition probability} defined by Colombeau-Gsponer in \cite{Colom8, sponer1}.  The results from this section  show that this definition makes sense.   When a matrix or operator $\ A$  on a Hilbert space  is involved, we shall write $\ \nu(A)=\nu(A,u,v)=\nu(|\langle u\mid Av\rangle|)$ without  specifying  the dependence of  the pair of fixed orthogonal vectors involved in the definition. The same will be done when transition probabilities of fixed orthogonal pairs are involved (see the next section).   This definition should be compared with the definition of $\ \mu(e), e\in \mathcal{B}(\overline{\mathbb{K}})$ given in \cite{juroliv}. Since a natural way of approximating  discrete data is the use of elementary step functions, it is natural to think of idempotents stemming from physical reality as resulting from such elementary step functions. Hence their probability is well defined. See \cite{juroliv} for more information with regard to   $\ \mu(e), e\in \mathcal{B}(\overline{\mathbb{K}})$.    

We recall  the following result fundamental results concerning   Bohr's a.p.f (see \cite{Besi, Bohr}).

\begin{theorem} [Fundamental Theorem of A.P.F.]
\label{fund}
A complex valued function $\ f\ $ is almost periodic if and only if $\ f\in \overline{V}$. Moreover,  $\overline{V}$ is a $\C$-algebra and its elements are  bounded uniformly continuous functions. 
\end{theorem}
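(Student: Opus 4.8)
The plan is to split the biconditional into its two inclusions and to dispose of the algebraic and regularity assertions along the way. I would treat $\overline{V}\subseteq \mathrm{AP}$ (writing $\mathrm{AP}$ for the set of almost periodic functions) as the routine direction and reserve the real effort for the reverse inclusion. First I would dispatch the structural claims. Each generator $e^{i\lambda x}$ is bounded by $1$ and uniformly continuous on $\R$, and since $e^{i\lambda x}e^{i\mu x}=e^{i(\lambda+\mu)x}$, the span $V$ is closed under multiplication; thus $V$ is a $\C$-subalgebra of the bounded uniformly continuous functions. Passing to the uniform closure preserves all three properties: a uniform limit of bounded functions is bounded, a uniform limit of uniformly continuous functions is uniformly continuous, and---using the uniform bound on $\|f_n\|_\infty$ to estimate $\|f_n g_n-fg\|_\infty$---a uniform limit of products of elements of $V$ lies in $\overline{V}$. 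Hence $\overline{V}$ is a $\C$-algebra of bounded uniformly continuous functions.

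For the easy inclusion, each $e^{i\lambda x}$ is periodic (or constant when $\lambda=0$), hence almost periodic, since the integer multiples of its period form a relatively dense set of $\varepsilon$-translation numbers for every $\varepsilon>0$. I would then check that $\mathrm{AP}$ is a $\C$-vector space closed under uniform limits. Closure under uniform limits is a direct $\varepsilon/3$ estimate: an $\varepsilon/3$-translation number of a nearby $f_n$ is an $\varepsilon$-translation number of $f$. Closure under sums is less trivial, since it requires that two almost periodic functions possess a \emph{relatively dense} set of common $\varepsilon$-translation numbers. The cleanest route is to establish Bochner's criterion first---$f\in\mathrm{AP}$ iff the family of translates $\{f(\cdot+h):h\in\R\}$ is relatively compact in $(C_b(\R),\|\cdot\|_\infty)$---from which simultaneous almost-periodicity, and hence the vector space property, follows at once. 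Together these yield $\overline{V}\subseteq\mathrm{AP}$.

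The substance is the reverse inclusion $\mathrm{AP}\subseteq\overline{V}$, i.e.\ Bohr's approximation theorem, and this is where I expect the main obstacle. The program is the classical Fourier--Bohr theory. I would first prove that every $f\in\mathrm{AP}$ has a mean value
$$ M(f)=\lim_{T\to\infty}\frac{1}{2T}\int_{-T}^{T}f(x)\,dx, $$
the limit existing because uniform continuity together with almost periodicity forces the averages over long windows to stabilize. Setting $a(\lambda)=M\!\left(f(x)e^{-i\lambda x}\right)$, I would then derive Bessel's inequality $\sum_\lambda|a(\lambda)|^2\le M(|f|^2)$, which shows the set of exponents $\lambda$ with $a(\lambda)\neq 0$ is at most countable, together with the Parseval relation, which yields the uniqueness statement that an almost periodic function with all Fourier--Bohr coefficients zero vanishes identically.

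The last and hardest step is to convert these coefficients into an explicitly and \emph{uniformly} convergent approximation by elements of $V$. I would use the Bochner--Fej\'er construction: select a finite set of $\Q$-linearly independent reals whose rational combinations approximate the relevant exponents $\lambda_1,\dots,\lambda_N$, and build from them a multidimensional Fej\'er kernel, which is a nonnegative summability kernel. Convolving $f$ against this kernel in the mean-value sense produces trigonometric polynomials $\sigma_n\in V$ with $\sigma_n\to f$ uniformly, the nonnegativity of the kernel being precisely what upgrades mean-square control to uniform control. This gives $f\in\overline{V}$ and closes the equivalence. The genuine difficulty throughout is this approximation step---securing the mean value, the countability of the spectrum, and above all engineering the Bochner--Fej\'er kernels so that convergence is uniform rather than merely in mean square; by contrast the inclusion $\overline{V}\subseteq\mathrm{AP}$ and the algebra and regularity claims are comparatively mechanical.
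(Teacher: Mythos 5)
Your outline is correct and follows the standard classical route (mean value, Fourier--Bohr coefficients, Bessel/Parseval, and the Bochner--Fej\'er summability kernels for the hard inclusion $\mathrm{AP}\subseteq\overline{V}$), which is exactly the development in the references the paper cites for this theorem. The paper itself offers no proof --- it recalls the result from Bohr and Besicovitch --- so there is nothing to contrast beyond noting that your plan reproduces the argument of those sources, with the only thin spot being the existence of the mean value, which you assert rather than derive from the relative denseness of translation numbers.
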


If $f $ is a periodic function  with period $L$,  then it  is easy to prove  that  $$M(f)  : = \lim\limits_{T\longrightarrow \infty} \frac{1}{T}\int\limits_{0}^{T}f(x)dx =   \frac{1}{L} \int  _{0}^{L} f(x)dx$$ 
\vspace{.3cm}

\noindent In particular, $M(|sin(Ax)|)=M(|cos(Ax)|)=\frac{2}{\pi}$, $\forall\  A>0$.    We shall use this in the section of examples.

 \begin{theorem} [Mean Value Theorem]
\label{meanval}

For every a.p.f.  the mean value  $$M(f) : = \lim\limits_{T\longrightarrow \infty} \frac{1}{T}\int\limits_{0}^{T}f(x)dx $$

\noindent exist.

\end{theorem}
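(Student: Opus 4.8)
The plan is to prove the Mean Value Theorem by reducing the existence of the limit for a general almost periodic function to the case of elements of $V$, where it is transparent, and then using a uniform approximation argument. By the Fundamental Theorem of A.P.F. (Theorem \ref{fund}), $f$ is almost periodic if and only if $f \in \overline{V}$, so there exists a sequence $(p_k)$ in $V$ with $\|f - p_k\|_\infty \to 0$. Each $p_k$ is a finite $\C$-linear combination of pure vibrations $e^{i\lambda x}$, for which the averages $\frac{1}{T}\int_0^T e^{i\lambda x}\,dx$ are computed explicitly: the limit as $T \to \infty$ equals $1$ when $\lambda = 0$ and equals $0$ when $\lambda \neq 0$ (since the integral is bounded by $\frac{2}{|\lambda| T}$). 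Hence $M(p_k)$ exists for every $k$, being the coefficient of the constant term of $p_k$.

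The key estimate is that the averaging functional $f \mapsto \frac{1}{T}\int_0^T f(x)\,dx$ is, for each fixed $T$, a contraction with respect to $\|\cdot\|_\infty$: one has
$$\left| \frac{1}{T}\int_0^T f(x)\,dx - \frac{1}{T}\int_0^T p_k(x)\,dx \right| \leq \frac{1}{T}\int_0^T |f(x) - p_k(x)|\,dx \leq \|f - p_k\|_\infty.$$
First I would use this to show that $(M(p_k))_k$ is a Cauchy sequence in $\C$: indeed $|M(p_k) - M(p_j)| \leq \|p_k - p_j\|_\infty$, and the right-hand side is small since $(p_k)$ converges uniformly. Let $L := \lim_k M(p_k)$. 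Then I would show that $\lim_{T\to\infty} \frac{1}{T}\int_0^T f(x)\,dx = L$ by a standard three-epsilon argument: for $\eta > 0$, fix $k$ with $\|f - p_k\|_\infty < \eta/3$ and $|M(p_k) - L| < \eta/3$, then choose $T_0$ so that $\left|\frac{1}{T}\int_0^T p_k\,dx - M(p_k)\right| < \eta/3$ for all $T \geq T_0$; combining these with the contraction estimate above bounds $\left|\frac{1}{T}\int_0^T f\,dx - L\right|$ by $\eta$ for all $T \geq T_0$.

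The main obstacle is ensuring the convergence $\frac{1}{T}\int_0^T p_k(x)\,dx \to M(p_k)$ is uniform enough to interchange the limits in $k$ and $T$; this is handled cleanly by the contraction estimate, which decouples the approximation in $k$ from the limit in $T$ and makes the whole argument uniform in $T$. A minor technical point worth addressing is that the Fundamental Theorem is stated for uniform limits of pure vibrations rather than the periodic step functions $S$, but since $\overline{V} \subset \overline{S}$ and the averaging functional is $\|\cdot\|_\infty$-continuous, either approximating family suffices; I would work with $V$ since the explicit computation of $M$ on exponentials is the cleanest entry point. The proof delivers $M$ as a bounded linear functional on $\overline{V}$ of norm one, which is the form needed for the applications to transition probabilities in the later sections.
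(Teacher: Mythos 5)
The paper does not actually prove Theorem~\ref{meanval}: it is recalled from Bohr and Besicovitch as a classical fact, with no argument supplied. Your proof is correct as a derivation within the paper's framework. Granting Theorem~\ref{fund}, the computation $M(e^{i\lambda x})=0$ for $\lambda\neq 0$ (via the bound $\frac{2}{|\lambda|T}$) and $M(1)=1$ gives $M(p_k)$ for every trigonometric polynomial, the contraction estimate $\bigl|\frac{1}{T}\int_0^T f-\frac{1}{T}\int_0^T p_k\bigr|\leq\|f-p_k\|_\infty$ is uniform in $T$ and correctly decouples the two limits, and the Cauchy argument plus the three-epsilon estimate are complete. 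The one point you should flag is logical order rather than correctness: in the classical development the Mean Value Theorem is proved \emph{first}, directly from the definition of almost periodicity (covering $[0,T]$ by translates controlled by a relatively dense set of translation numbers), and it is then an essential ingredient in the proof of the Approximation Theorem via Bohr--Fourier series and Bochner--Fej\'er polynomials. So your argument, while valid given Theorem~\ref{fund} as a black box, inverts the standard dependency and would be circular if one tried to assemble a self-contained account of both theorems from scratch. Since the paper treats both results as recalled facts, this is acceptable here, and your route has the advantage of being short and of exhibiting $M$ explicitly as a norm-one linear functional on $\overline{V}$, which is the form used in the later sections on transition probabilities.
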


\noindent Hence,  an  a.p.f.  may not be periodic but  it still  has a mean value.       Theorem~\ref{meanval}  is the foundation for a theory of Fourier series of almost periodic functions.  In fact, given an a.p.f. $\ f$ there exist an at most an  enumerable set of vibrations $\ e^{-i\lambda x}$ such that $\ M(f(x)e^{-i\lambda x})\neq 0$ and $\ f\equiv 0$ if and only if $\ M(|f|^2)=0$. Parseval's Equation also holds:

$$M(|f(x)|^2)=\sum\limits_{M(f(x)e^{-i\lambda x})\neq 0}|M(f(x)e^{-i\lambda x})|^2$$

   We proceed to  prove  alternative formulas for the mean value of an a.p.f.  and by composing them with pure infinities, we introduce their generalized transition probability int   the generalized environment.

\begin{lemma}
\label{stepf}

Let $f$ be an elementary periodic step function and $\gamma > 0$.  Then $$M(f) = \lim\limits_{T\longrightarrow \infty} \gamma \cdot T^{\gamma}\cdot \int \limits_T^{\infty} \frac{ f(x)}{x^{1+\gamma}}dx$$

\end{lemma}

\begin{proof} {\bf Case $\gamma = 1$:}  We first suppose that $T = n$ is an integer and hence we may calculate its limit as $\lim\limits_{n\longrightarrow \infty} n\cdot \int \limits_n^{\infty} \frac{ f(x)}{x^2}dx$ $=  \lim  \limits_{n \longrightarrow \infty} n\cdot (\sum\limits_{k = n}^{\infty} [\frac{1}{k} - \frac{1}{k+\beta}])$ $= \beta \cdot (    \lim  \limits_{n \longrightarrow \infty} n\cdot (\sum\limits_{k = n}^{\infty} \frac{1}{k(k+\beta)}))$. Note that  $\int\limits_n^{\infty}\frac{1}{u(u + \beta)}du < \sum\limits_{k = n}^{\infty} \frac{1}{k(k+\beta)} < \int\limits_{n - 1}^{\infty}\frac{1}{u(u + \beta)}du$ $= \int\limits_{n - 1}^{n}\frac{1}{u(u + \beta)}du+\int\limits_{n}^{\infty}\frac{1}{u(u + \beta)}du$. Applying L'Hospital's rule from elementary calculus we get that  $\lim \limits_{n\longrightarrow \infty} n\cdot ( \int\limits_{n - 1}^{n}\frac{1}{u(u + \beta)}du)=0$. From this it follows that   $\lim \limits_{n\longrightarrow \infty}n\cdot ( \sum\limits_{k = n}^{\infty} \frac{1}{k(k+\beta)})  = \lim \limits_{n\longrightarrow \infty} n \cdot (\int\limits_n^{\infty}\frac{1}{u(u + \beta)}du) =  \lim \limits_{x\longrightarrow \infty} x \cdot (\int\limits_x^{\infty}\frac{1}{u(u + \beta)}du)$ $=\lim \limits_{x\longrightarrow \infty} \frac{x}{\beta}\cdot \ln (\frac{x+\beta}{x}) =1$, where we once again applied L'Hospital's rule to get the last equality.  

 Now if $T\in \R$ then $T = n +r$, with $0\leq r\leq 1$. We have that  $\lim\limits_{T\longrightarrow \infty} T\cdot \int \limits_T^{\infty} \frac{ f(x)}{x^2}dx$ $= \lim\limits_{T\longrightarrow \infty} T\cdot \int \limits_{n+r}^{\infty} \frac{ f(x)}{x^2}dx= \lim\limits_{T\longrightarrow \infty} n\cdot \int \limits_n^{\infty} \frac{ f(x)}{x^2}dx + \lim\limits_{T\longrightarrow \infty} r\cdot \int \limits_n^{\infty} \frac{ f(x)}{x^2}dx+ \lim\limits_{T\longrightarrow \infty} (n+r)\cdot \int \limits_n^{n+r} \frac{ f(x)}{x^2}dx$.  The integral in the middle goes to $0$, as $T$ goes to infinity  (independ of the value of $r$),  and the last integral is $\leq (n+r)[\frac{1}{n}-\frac{1}{n+r}] = \frac{r}{n}$.  From this the result follows. 

\noindent {\bf Case $\gamma \neq 1$:}  We will suppose first that $\gamma \in \Q$ and $T = n \in \N$.  As before, we have  \\  $\lim\limits_{n\longrightarrow \infty} \gamma \cdot  n^{\gamma}\cdot \int \limits_n^{\infty} \frac{ f(x)}{x^{1+\gamma}}dx$ $=  \lim  \limits_{n \longrightarrow \infty} n^{\gamma}\cdot (\sum\limits_{k = n}^{\infty} [\frac{1}{k^{\gamma}} - \frac{1}{(k+\beta)^{\gamma}}])$ $=      \lim  \limits_{x \longrightarrow \infty} x^{\gamma}\cdot (\int\limits_{x}^{\infty} [ \frac{1}{u^{\gamma}} - \frac{1}{ (u+\beta)^{\gamma}}]du)$ \\  $=  \frac{1}{1-\gamma}  \lim  \limits_{x \longrightarrow \infty} x^{\gamma}\cdot ((u+\beta)^{1-\gamma} - u^{1-\gamma}) $.  If $ 1 -\gamma > 0$ we set $ 1- \gamma =\frac{p}{q}$ and otherwise we set $ \gamma - 1 =\frac{p}{q}$. In the first case  the limit becomes   $ \frac{1}{1-\gamma}  \lim  \limits_{x \longrightarrow \infty} x^{\gamma}\cdot (\sqrt[q]{(u+\beta)^p} - \sqrt[q]{u^p} )$ and in the latter  case it becomes  $ \frac{1}{1-\gamma}  \lim  \limits_{x \longrightarrow \infty} x^{\gamma}\cdot  (\frac{1}{\sqrt[q]{(u+\beta)^p}} - \frac{1}{\sqrt[q]{u^p}} )$. Rationalizing this expression in the usual way,  and calculating its limit we obtain that it equals $\ \beta$. By continuity, the result holds for all $\ \gamma > 0$.  
 \end{proof}

\begin{corollary}
\label{stepf1}

Let $f$ be a uniform limit of periodic step functions and $c > 0, \gamma > 0$.  Then  $$M(f) = \lim\limits_{T\longrightarrow \infty}\gamma \cdot  T^{\gamma}\cdot \int \limits_T^{\infty} \frac{ f(x)}{x^{1+\gamma}}dx  = \lim\limits_{\eta\longrightarrow 0} \frac{1}{\eta} \int \limits_0^{\eta} f\bigg(\frac{1}{\epsilon^c}\bigg)d\epsilon$$

In particular,  $\nu(f\circ \alpha^{-c})=M(f)$.
 \end{corollary}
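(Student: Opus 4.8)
The plan is to establish the two displayed equalities in turn and then read off the statement about $\nu$. For the first equality the idea is to promote Lemma~\ref{stepf} from elementary periodic step functions to the uniform closure of the periodic step functions, which by Theorem~\ref{fund} and the remarks preceding it contains every a.p.f. The engine for this promotion is the observation that, for a fixed $\gamma>0$, the averaging functionals
$$L_T(f):=\gamma\cdot T^{\gamma}\int\limits_T^{\infty}\frac{f(x)}{x^{1+\gamma}}\,dx$$
are linear in $f$ and satisfy the uniform-in-$T$ bound $|L_T(f)|\le\|f\|_{\infty}$, because $\gamma T^{\gamma}\int_T^{\infty}x^{-1-\gamma}\,dx=1$ turns $L_T$ into a probability average. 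The mean value $M$ is likewise linear and dominated by $\|\cdot\|_{\infty}$. I expect the real content of the whole proof to sit here: the interchange of the limits $T\to\infty$ and $n\to\infty$ in the approximation has to be justified, and it rests entirely on this operator bound.

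First I would extend Lemma~\ref{stepf} by linearity from elementary periodic step functions to arbitrary periodic step functions: every period-$1$ step function is a finite $\R$-linear combination of characteristic functions of subintervals of $[0,1]$, each of which is a difference of two elementary periodic step functions of the form $\chi_{[0,\beta]}$, and the normalisations $M(f(x+c))=M(f)$, $M(f(\lambda x))=M(f)$ (which hold for $\lim_T L_T$ as well) reduce an arbitrary period to period $1$. Since both $M$ and each $L_T$ are linear, $M(f)=\lim_{T\to\infty}L_T(f)$ survives these combinations. To reach a uniform limit $f=\lim_n f_n$ of periodic step functions I would run the standard three-$\varepsilon$ estimate
$$|L_T(f)-M(f)|\le|L_T(f-f_n)|+|L_T(f_n)-M(f_n)|+|M(f_n-f)|\le 2\|f-f_n\|_{\infty}+|L_T(f_n)-M(f_n)|,$$
choosing $n$ first (uniformly in $T$, thanks to $\|L_T\|\le1$) and then $T$ large. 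This gives the first equality for every $f$ in the closure and every $\gamma>0$.

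For the second equality I would perform the change of variables $x=\varepsilon^{-c}$, i.e. $\varepsilon=x^{-1/c}$ and $d\varepsilon=-\tfrac1c\,x^{-1/c-1}\,dx$, in the right-hand integral. As $\varepsilon$ runs over $(0,\eta]$ the variable $x$ runs over $[T,\infty)$ with $T=\eta^{-c}\to\infty$, and $\tfrac1\eta=T^{1/c}$, so a direct computation yields
$$\frac1\eta\int\limits_0^{\eta}f\left(\frac{1}{\varepsilon^{c}}\right)d\varepsilon=\frac1c\,T^{1/c}\int\limits_T^{\infty}\frac{f(x)}{x^{1+1/c}}\,dx,$$
and the right-hand side is exactly $L_T(f)$ evaluated at the admissible value $\gamma=1/c$. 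Hence the $\eta\to0$ limit on the right is the $T\to\infty$ limit in the middle taken with $\gamma=1/c$, and by the first equality both equal $M(f)$.

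Finally, the assertion on $\nu$ becomes essentially definitional once the second equality is in hand. By definition $f\circ\alpha^{-c}$ is the class of the net $\varepsilon\mapsto f(\varepsilon^{-c})$, which is moderate since $f$ is bounded by Theorem~\ref{fund}, so $\nu(f\circ\alpha^{-c})$ is the class of $\varepsilon\mapsto\frac1\varepsilon\int_0^{\varepsilon}f(t^{-c})\,dt$. The second equality says precisely that this net converges to $M(f)$ as $\varepsilon\to0$, whence $\nu(f\circ\alpha^{-c})\approx M(f)$; under the identification of a convergent net with its limit, the same convention behind the ``$f\approx f_0\Rightarrow\nu(f)=f_0$'' remark recorded after the definition of $\nu$, this is the claimed identity. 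I would flag as a secondary point that this is an association rather than a literal equality in $\overline{\mathbb{R}}$, since the corollary only delivers ordinary convergence of the defining net and not negligibility of its distance to $M(f)$.
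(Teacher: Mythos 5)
Your proof is correct and follows essentially the same route as the paper's: uniform approximation by periodic step functions to upgrade Lemma~\ref{stepf}, then the change of variables $x=\varepsilon^{-c}$ identifying the $\eta$-average with the $T$-limit at $\gamma=1/c$. You simply supply the details the paper leaves implicit (the bound $\|L_T\|\le 1$ justifying the limit interchange, the linearity reduction from elementary to general periodic step functions, and the observation that the final identity for $\nu$ is really an association), all of which are consistent with the paper's argument.
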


\begin{proof} If $f$ is the uniform limit of the sequence $(f_n)$ then clearly we have that  $ \lim\limits_{n\longrightarrow \infty} M( f_n)   = M(f)     $.  To prove the second part,  make the chance of variable $ x = \frac{1}{\varepsilon^c}$ obtaining that $\lim\limits_{\eta\longrightarrow 0} \frac{1}{\eta} \int \limits_0^{\eta} f(\frac{1}{\epsilon^c})d\epsilon = \lim\limits_{T\longrightarrow \infty} \gamma \cdot  T^{\gamma}\cdot \int \limits_T^{\infty} \frac{ f(x)}{x^{1+\gamma}}dx$, where $\gamma =\frac{1}{c}$. The result now follows from Lemma~\ref{stepf}. \end{proof}





\begin{theorem}
\label{meanval1}

Let $\ f$ be an a.p.f. $\gamma > 0 $ and $\lambda : [0, 1]\longrightarrow [0, \infty]\ $ be continuos with $\lambda (0) = 0$ an let $\nu$ be its inverse. Suppose that $\nu \in \mathcal{C}^{k}([0, \lambda (1)])$,  with  $k \geq 2$ , is  not a null  function. Then   $$M(f)  =  \lim\limits_{T\longrightarrow \infty} \gamma\cdot T^{\gamma}\cdot \int \limits_T^{\infty} \frac{ f(x)}{x^{1+\gamma}}dx  =  \lim\limits_{\eta\longrightarrow 0} \frac{1}{\eta} \int \limits_0^{\eta} f\bigg(\frac{1}{\lambda(\epsilon)}\bigg)d\epsilon$$

In particular, $ \nu(f\circ\frac{1}{\lambda})=M(f)$.

\end{theorem}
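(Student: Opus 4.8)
The first equality is essentially a restatement of Corollary~\ref{stepf1}. By the Fundamental Theorem of A.P.F. (Theorem~\ref{fund}) an almost periodic $f$ lies in $\overline{V}$, and since $\overline{V}\subset\overline{S}$ it is a uniform limit of periodic step functions; hence Corollary~\ref{stepf1} applies verbatim and gives $M(f)=\lim_{T\to\infty}\gamma T^{\gamma}\int_T^\infty f(x)x^{-1-\gamma}\,dx$ for the prescribed $\gamma>0$. All the new content is therefore carried by the second equality, in which $\gamma$ plays no role.

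For the second equality the plan is to substitute $x=1/\lambda(\epsilon)$, i.e. $\epsilon=\nu(1/x)$, and to set $T=1/\lambda(\eta)$, so that $T\to\infty$ precisely when $\eta\to 0$. Because $\nu\in\mathcal{C}^{k}$ with $k\ge 2$, one has $d\epsilon=-\nu'(1/x)x^{-2}\,dx$, and writing $w(x):=\nu'(1/x)x^{-2}\ge 0$ the target becomes
$$\frac{1}{\eta}\int_0^\eta f\Big(\frac{1}{\lambda(\epsilon)}\Big)\,d\epsilon=\frac{1}{\nu(1/T)}\int_T^\infty f(x)\,w(x)\,dx.$$
Applying the same substitution to the constant function $1$ yields the key normalization $\int_T^\infty w(x)\,dx=\int_0^{1/T}\nu'(u)\,du=\nu(1/T)$, so that $w/\nu(1/T)$ is a genuine probability density on $[T,\infty)$ and the right-hand side is a weighted average of $f$. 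The non-degeneracy hypothesis (that $\nu$ is not a null function) guarantees $\nu(1/T)>0$, so the average is well defined.

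Since $w/\nu(1/T)$ is a probability measure, the functional $A_T(f):=\nu(1/T)^{-1}\int_T^\infty f\,w\,dx$ obeys $|A_T(f)-A_T(g)|\le\|f-g\|_\infty$ uniformly in $T$. As $f$ is a uniform limit of periodic step functions $f_n$ with $M(f_n)\to M(f)$, a three-$\varepsilon$ argument reduces the claim to a single periodic step function $f$. For such an $f$ put $\psi:=f-M(f)$, a periodic mean-zero function; its antiderivative $P(x):=\int_T^x\psi$ is bounded and satisfies $P(T)=0$. Integrating by parts,
$$\int_T^\infty\psi(x)w(x)\,dx=-\int_T^\infty P(x)w'(x)\,dx,$$
the boundary terms vanishing because $P$ is bounded and $w(x)\to 0$. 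Hence $|A_T(f)-M(f)|\le\|P\|_\infty\,\nu(1/T)^{-1}\int_T^\infty|w'(x)|\,dx$, and the theorem reduces to the estimate $\int_T^\infty|w'|\,dx=o(\nu(1/T))$.

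This last estimate is the heart of the matter and is exactly where the regularity $k\ge 2$ must be used. From $w'(x)=-\nu''(1/x)x^{-4}-2\nu'(1/x)x^{-3}$ the second term is handled cleanly: substituting $u=1/x$ and integrating by parts, $\int_T^\infty\nu'(1/x)x^{-3}\,dx=\int_0^{1/T}u\nu'(u)\,du=\tfrac1T\nu(1/T)-\int_0^{1/T}\nu\le\tfrac1T\nu(1/T)=o(\nu(1/T))$. The first term needs the bound on $\nu''$ supplied by $\nu\in\mathcal{C}^2$, and I expect this to be the one genuinely delicate step: the naive bound $|\nu''|\le M$ is too lossy when $\nu$ is flat at the origin, so one must instead compare $\int_0^{1/T}|\nu''(u)|u^2\,du$ directly with $\nu(1/T)$. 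I anticipate the argument closing precisely when $\nu$ is non-degenerate at $0$ (the substantive content I read into the ``not a null function'' hypothesis), since then $\nu(1/T)$ dominates the contribution of the first term; a genuinely flat $\nu$, e.g. $\nu(y)=e^{-1/y}$ (for which $w(x)=e^{-x}$), concentrates the weight near $x=T$ and must be excluded. Establishing this decay uniformly over the admissible $\nu$ is the obstacle I would plan around most carefully.
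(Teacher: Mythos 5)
Your treatment of the first equality and your setup for the second are exactly the paper's: the substitution $x=1/\lambda(\epsilon)$ turns the claim into showing that the weighted averages $A_T(f)=\nu(1/T)^{-1}\int_T^\infty f(x)\,\nu'(1/x)\,x^{-2}\,dx$ converge to $M(f)$, and your normalization identity and the reduction to a single periodic step function are sound. But from that point on your argument is a plan rather than a proof: the step you yourself flag as ``the obstacle'' --- showing $\int_0^{1/T}|\nu''(u)|\,u^2\,du=o(\nu(1/T))$ --- is precisely where the whole content of the theorem sits, and leaving it open leaves a genuine gap. Your instinct that the bound must fail for flat $\nu$ (e.g.\ $\nu(y)=e^{-1/y}$) and that ``not a null function'' must encode non-degeneracy at $0$ is correct, but you never convert that reading of the hypothesis into an estimate.

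The paper closes exactly this gap by a different and shorter device: it reads ``not a null function'' as the existence of a first non-vanishing derivative, i.e.\ $q\in\N^*$ with $b:=\nu^{(q)}(0)\neq0$ and $\nu^{(j)}(0)=0$ for $0<j<q$. Taylor expansion then gives $T^q\nu(1/T)\to b/q!$ and $\nu'(1/x)=\frac{b}{(q-1)!}x^{-(q-1)}+g(x)$ with $|g(x)|\le Cx^{-q}$, so the weighted average splits into a main term $q\,T^q\int_T^\infty f(x)x^{-(q+1)}dx$ --- which is the already-established first equality with $\gamma=q$ --- plus an error dominated by $T^q\int_T^\infty x^{-(q+2)}dx\to0$. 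No integration by parts against the antiderivative $P$ and no control of $\int_T^\infty|w'|$ is needed. If you want to finish your own route, the same Taylor information is what does it: with contact order $q$ one has $\nu''(u)=O(u^{q-2})$ near $0$, hence $\int_0^{1/T}|\nu''(u)|u^2\,du=O(T^{-(q+1)})=o(T^{-q})=o(\nu(1/T))$. Note that either way one implicitly needs $q\le k$, a point the theorem's hypotheses leave unstated; without identifying such a $q$, neither your argument nor the paper's goes through.
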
 

\begin{proof}  As seen above,   $\ f$ can be uniformly approximated by periodic  steps functions, and hence the first equality follows from Corollary~\ref{stepf1}. To prove the second equality, note that since $\nu$ is not  a null function there exists  $ q \in \N^*$ such that $ b := \nu^{(q)}(0)\neq 0$ and $\nu^{(j)}(0)=0$, $\forall \  0 < j < q$. Making  the chance of variable $ x= \frac{1}{\lambda(\epsilon)}$  one obtains  $$ \lim\limits_{\eta\longrightarrow 0} \frac{1}{\eta} \int \limits_0^{\eta} f\bigg(\frac{1}{\lambda(\epsilon)}\bigg)d\epsilon =  \lim\limits_{T\longrightarrow \infty}\frac{1}{\nu(\frac{1}{T})}\int \limits_T^{\infty} \frac{f(x)\nu^{\prime}(\frac{1}{x})dx}{ x^2}$$

From hypothesis we have that  $\lim\limits_{T\longrightarrow \infty} T^q \nu(\frac{1}{T}) =\frac{b}{q!}$ and $\ \nu^{\prime} (\frac{1}{x}) = \frac{b}{q!x^{q-1}} + g(x)$, $|g(x)| \leq \frac{C}{x^{q}}$, where $C > 0$ is a constant.  Hence we have that   $$  \lim\limits_{T\longrightarrow \infty}\frac{1}{\nu(\frac{1}{T})}\int \limits_T^{\infty} \frac{f(x)\nu^{\prime}(\frac{1}{x})dx}{ x^2}  =  \lim\limits_{T\longrightarrow \infty}\frac{T^q}{T^q\nu(\frac{1}{T})}\int \limits_T^{\infty} \frac{f(x)[\frac{b}{q!x^{q-1}} + g(x)]}{ x^2}dx =$$ $$ \lim\limits_{T\longrightarrow \infty}\frac{T^q}{b/q!}\int \limits_T^{\infty} \frac{f(x)[\frac{b}{(q - 1)!x^{q-1}} + g(x)]}{ x^2}dx = \lim\limits_{T\longrightarrow \infty}q\cdot T^q \cdot \int \limits_T^{\infty} \frac{f(x)dx}{x^{q+1}} +  \lim\limits_{T\longrightarrow \infty}T^q \cdot \int \limits_T^{\infty} \frac{q!f(x)g(x)dx}{bx^{2}}  $$

Since $\ f$ is bounded we have, from our hypothesis,   that $$\Big |  \lim\limits_{T\longrightarrow \infty}T^q \cdot \int \limits_T^{\infty} \frac{q!f(x)g(x)dx}{bx^{2}}  \Big | \leq   \frac{q!C\|f\|_{\infty}}{b} \lim\limits_{T\longrightarrow \infty}T^q \cdot \int \limits_T^{\infty}\frac{dx}{x^{q+2}} = 0 $$

It follows that $  \lim\limits_{T\longrightarrow \infty}\frac{1}{\nu(\frac{1}{T})}\int \limits_T^{\infty} \frac{f(x)\nu^{\prime}(\frac{1}{x})dx}{ x^2}  =  \lim\limits_{T\longrightarrow \infty}q\cdot T^q \cdot \int \limits_T^{\infty} \frac{f(x)dx}{x^{q+1}}  = M(f)$.\end{proof}

 Note that if $\nu$ is a null function then $\  \lim\limits_{\eta\longrightarrow 0} \frac{1}{\eta} \int \limits_0^{\eta} f(\frac{1}{\lambda(\epsilon)})d\epsilon = 0$. This is easily seen since, in this case,  we have that $|\nu^{\prime} (x)| \leq C(q)\frac{1}{x^q}\ $, where $C(q)$ is a constant,  for all $\ q > 0$.
 \vspace{0.5cm}

 $ \nu(f\circ\frac{1}{\lambda})=M(f)$
 
 \begin{lemma}
 \label{product}
 
\begin{enumerate}
\item  Let $\ f$ be a.p.f. and $\ g$ such that $\lim\limits_{x\rightarrow \infty}g(x)=L$. Then $$M(gf)=L\cdot M(f)$$
 \item Let $\ f$  and $\ \lambda$ be as in the previous theorem and let  $\ g=[g(\varepsilon)]\in\overline{\mathbb{R}}$ be such that $ g\approx g_0\in\mathbb{R}$. Then $$\nu\bigg(g\cdot\bigg( f\circ\frac{1}{\lambda}\bigg)\bigg)= \lim\limits_{\eta\longrightarrow 0} \frac{1}{\eta} \int \limits_0^{\eta}g(\varepsilon) f\bigg(\frac{1}{\lambda(\epsilon)}\bigg)d\epsilon =g_0\cdot M(f)$$
 \item Let $\ f$ and $\ g$ be as in the  previous item then $$ \nu(f\circ g)= \lim\limits_{\eta\longrightarrow 0} \frac{1}{\eta} \int \limits_0^{\eta}  f\bigg(g(\varepsilon)\bigg)d\epsilon =f(g_0) $$
 \end{enumerate}
 \end{lemma}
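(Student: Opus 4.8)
The plan is to dispatch all three items by a single device: split off the relevant constant (the limit $L$ in the first item, the associated real $g_0$ in the other two) and show that the leftover error term contributes nothing, using only that an a.p.f.\ is bounded (Theorem~\ref{fund}) together with the mean-value identities already established in Theorem~\ref{meanval1} and Corollary~\ref{stepf1}. Throughout I read the association $g\approx g_0$ in the sense that $g(\varepsilon)\to g_0$ as $\varepsilon\to 0$ for a (hence every) representative net; this is representative-independent precisely because a negligible net tends to $0$.

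For the first item, write $g=L+(g-L)$, so that $gf=Lf+(g-L)f$ and hence, once we know the relevant mean values exist, $M(gf)=L\cdot M(f)+M((g-L)f)$, with $M(f)$ existing by the Mean Value Theorem~\ref{meanval}. Putting $h=g-L$, so $h(x)\to 0$ as $x\to\infty$, and recalling $|f|\le\|f\|_{\infty}$, I would fix $\delta>0$, choose $X$ with $|h(x)|<\delta$ for $x>X$, and split $\frac{1}{T}\int_0^T hf=\frac{1}{T}\int_0^X hf+\frac{1}{T}\int_X^T hf$. The first summand is a fixed number divided by $T$ and so tends to $0$, while the second is bounded in absolute value by $\frac{T-X}{T}\,\delta\,\|f\|_{\infty}\le\delta\|f\|_{\infty}$. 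Letting $T\to\infty$ and then $\delta\to 0$ yields $M(hf)=0$, which simultaneously establishes the existence of $M(gf)$ and the value $L\cdot M(f)$.

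For the second item I would invoke Theorem~\ref{meanval1} directly. Writing $g(\varepsilon)=g_0+(g(\varepsilon)-g_0)$ inside the integral gives
$$\frac{1}{\eta}\int_0^\eta g(\varepsilon)\,f\Big(\tfrac{1}{\lambda(\varepsilon)}\Big)\,d\varepsilon=g_0\cdot\frac{1}{\eta}\int_0^\eta f\Big(\tfrac{1}{\lambda(\varepsilon)}\Big)\,d\varepsilon+\frac{1}{\eta}\int_0^\eta\big(g(\varepsilon)-g_0\big)\,f\Big(\tfrac{1}{\lambda(\varepsilon)}\Big)\,d\varepsilon.$$
By Theorem~\ref{meanval1} the first term tends to $g_0\cdot M(f)$ as $\eta\to 0$, and the second term is dominated by $\big(\sup_{0<\varepsilon\le\eta}|g(\varepsilon)-g_0|\big)\|f\|_{\infty}$, which tends to $0$ since $g(\varepsilon)\to g_0$; this is in fact cleaner than the first item because here the interval of integration itself shrinks to $0$. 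Hence the $\nu$-value equals $g_0\cdot M(f)$.

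For the last item the mechanism is continuity rather than averaging an oscillation: by Theorem~\ref{fund} an a.p.f.\ is (uniformly) continuous, so $g(\varepsilon)\to g_0$ forces $f(g(\varepsilon))\to f(g_0)$ as $\varepsilon\to 0$. Given $\delta>0$ I would pick $\eta_0$ with $|f(g(\varepsilon))-f(g_0)|<\delta$ on $(0,\eta_0]$, whence for $\eta\le\eta_0$,
$$\Big|\frac{1}{\eta}\int_0^\eta f(g(\varepsilon))\,d\varepsilon-f(g_0)\Big|=\Big|\frac{1}{\eta}\int_0^\eta\big(f(g(\varepsilon))-f(g_0)\big)\,d\varepsilon\Big|\le\delta,$$
so the limit is $f(g_0)$. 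The genuine analytic content of the lemma sits in the first item, whose tail estimate over the unbounded domain is the single step that is not an immediate consequence of the earlier results; the other two reduce, respectively, to Theorem~\ref{meanval1} plus a shrinking-interval estimate and to continuity, and the only recurring point needing care is the reading of $\approx$ as the convergence of representative nets.
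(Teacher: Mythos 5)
Your proof is correct, and for items 2 and 3 it follows essentially the paper's own route: item 2 is reduced to Theorem~\ref{meanval1} by splitting off $g_0$ and absorbing the error into $\sup_{(0,\eta]}|g-g_0|\cdot\|f\|_\infty$, and item 3 is continuity of $f$ plus the fact that averages of a convergent net converge to its limit. Where you genuinely diverge is item 1, and your version is the sounder one. The paper bounds $\bigl|\frac{1}{T}\int_0^T gf-\frac{1}{T}\int_0^T Lf\bigr|$ by $\|g-L\|_{[0,T],\infty}\cdot(2M(|f|)+1)$; as written that factor is the supremum of $|g-L|$ over $[0,T]$, which is nondecreasing in $T$ and does not tend to $0$, so the displayed estimate does not by itself yield the conclusion (it only works if the norm is read as taken over a tail $[X,\infty)$ after discarding an initial segment whose contribution is $O(1/T)$). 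Your argument --- fix $\delta$, choose $X$ with $|g-L|<\delta$ beyond $X$, split the integral at $X$, let $T\to\infty$ and then $\delta\to 0$ --- supplies exactly that missing tail estimate, and your observation that the decomposition also establishes the \emph{existence} of $M(gf)$, not merely its value, is a point the paper glosses over.
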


\begin{proof} We have that $ \bigg|\frac{1}{T} \int \limits_0^Tg(x) f(x)dx-\frac{1}{T} \int \limits_0^TL f(x)dx \bigg|\leq \|g-L\|_{[0,T],\infty}\cdot\frac{1}{T} \int \limits_0^T| f(x)|dx$ \\$\leq \|g-L\|_{[0,T],\infty}\cdot (2M(|f(x)|)+1)$, for $\ T$ large,  proving the first item. The second item follows by the the previous theorem and the proof of the previous item. For the last item, note that $\ f\circ g\approx f(g_0) $ because $\ f$ is continuous. Hence the transition probability is the mean value of any sequence converging to $\ f(g_0)$.\end{proof}

\noindent From the previous lemma,  we get the following corollary.

\begin{corollary}
\label{gapf}
Let $\ f$ be an a.p.f., $\ \lambda$ a pure infinity and $ \ g\in\overline{\mathbb{R}}$ such that $\ g\approx g_0\in \mathbb{R}$. Then $\ M(g\cdot f\circ\lambda)=g_0\cdot M(f)$ and, in case $\ f\circ g$ exists,   then  $\ M(f\circ g)=f(g_0)$.
\end{corollary}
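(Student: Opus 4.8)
The plan is to read this corollary as the \emph{pure infinity} reformulation of Lemma~\ref{product}: a generalized number $\lambda$ with $supp(\lambda)=\{\infty\}$ plays here the role that $\frac{1}{\lambda}$ played in Theorem~\ref{meanval1} and Lemma~\ref{product}, and the two asserted identities are exactly items (2) and (3) of that lemma rewritten under this identification. So I would prove each of the two claims by unwinding the definition of $\nu$ and reducing to the corresponding item. Throughout I read every equality sign in the sense of association $\approx$ in $\overline{\mathbb{R}}$ (equivalently, as the value of the classical limit $\lim_{\eta\to 0}\frac{1}{\eta}\int_0^\eta(\cdots)\,d\varepsilon$ when that limit exists), consistently with the way $\nu(f)=f_0$ was already used just after the definition of $\nu$.

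For the first identity I would fix a representing net $(\lambda(\varepsilon))$ of the pure infinity and express $\nu(g\cdot f\circ\lambda)$ as the generalized number determined by $\eta\mapsto\frac{1}{\eta}\int_0^\eta g(\varepsilon)f(\lambda(\varepsilon))\,d\varepsilon$. Writing $g=g_0+\delta$ with $\delta\approx 0$ splits the integrand additively. The $g_0$-piece equals $g_0\cdot\nu(f\circ\lambda)$, and $\nu(f\circ\lambda)=M(f)$ is precisely Lemma~\ref{product}(2) with $g\equiv 1$ (equivalently Theorem~\ref{meanval1}, reached through the change of variable $x=1/\lambda(\varepsilon)$ and the step-function approximation of $f$). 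For the $\delta$-piece I would invoke that an a.p.f. is bounded, so $\|f\|_\infty<\infty$ by the Fundamental Theorem of A.P.F.; then $\big|\frac{1}{\eta}\int_0^\eta\delta(\varepsilon)f(\lambda(\varepsilon))\,d\varepsilon\big|\le\|f\|_\infty\cdot\sup_{(0,\eta]}|\delta|$, and since $\delta\approx 0$ forces $\delta(\varepsilon)\to 0$ as $\varepsilon\to 0$, the right-hand side tends to $0$. Hence the $\delta$-piece is associated to $0$, and adding the two pieces yields $\nu(g\cdot f\circ\lambda)\approx g_0\,M(f)$.

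For the second identity I would argue purely by continuity. Since $g\approx g_0$ means $g(\varepsilon)\to g_0$ as $\varepsilon\to 0$ and $f$ is continuous (indeed uniformly continuous, again by the Fundamental Theorem of A.P.F.), the net $f(g(\varepsilon))$ converges to $f(g_0)$; that is, $f\circ g\approx f(g_0)$ as a generalized number, whenever the composition is defined (which is what the proviso ``in case $f\circ g$ exists'' records). Applying $\nu$ to a net associated to the constant $f(g_0)$ then returns $f(g_0)$, which is exactly Lemma~\ref{product}(3).

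The step that needs genuine care is the identity $\nu(f\circ\lambda)=M(f)$ inside the first claim, because it fails for an \emph{arbitrary} pure infinity: if $(\lambda(\varepsilon))$ is arranged so that $f(\lambda(\varepsilon))$ hugs an extreme value of $f$, the average $\frac{1}{\eta}\int_0^\eta f(\lambda(\varepsilon))\,d\varepsilon$ can settle at a value different from $M(f)$. The equality is therefore legitimate only for pure infinities carrying the regularity of Theorem~\ref{meanval1}, namely $\lambda=\frac{1}{\mu}$ with $\mu$ continuous, $\mu(0)=0$ and inverse of class $\mathcal{C}^{k}$, $k\ge 2$, not a null function; this is the hypothesis silently inherited through Lemma~\ref{product}. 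Making that regularity explicit, and checking that the error control of Theorem~\ref{meanval1} survives multiplication by the infinitesimal $\delta$, is where the real content sits; everything else is bookkeeping with the definition of $\nu$ and with association.
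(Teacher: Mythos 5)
Your proposal follows the paper's own (one-line) derivation: the paper simply asserts that the corollary follows from Lemma~\ref{product}, and your argument is a careful unwinding of items (2) and (3) of that lemma under the identification of the pure infinity with $1/\lambda$, so the approach is essentially the same. Your closing caveat is nonetheless worth recording: the identity $\nu(f\circ\lambda)=M(f)$ does fail for an \emph{arbitrary} pure infinity (e.g.\ a net $\lambda(\varepsilon)$ locked onto extreme values of $f$), so the corollary silently inherits the regularity hypotheses of Theorem~\ref{meanval1}, a restriction the paper's statement does not make explicit.
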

\vspace{0.cm}

The last result of this section proves that a positive function which is bounded by an a.p.f.  has generalized mean values.

\begin{theorem}
\label{dominado}

Let  $\ 0\leq |g(x)|\leq f(x)$,  $\ f$ is an a.p.f  and $\ \lambda$ a pure infinity. Then   both $\ \nu(g\circ \lambda)$ and $\ \nu(|g\circ \lambda|)$ have non-empty supports. 
\end{theorem}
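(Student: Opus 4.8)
The plan is to reduce the statement to the boundedness of almost periodic functions followed by a soft compactness argument. By Theorem~\ref{fund} the a.p.f. $f$ is a bounded uniformly continuous function, so I set $C := \|f\|_{\infty} < \infty$. The domination hypothesis $0 \le |g(x)| \le f(x)$ then forces $|g(x)| \le C$ for every $x$ in the domain, so $g$ is itself bounded. Since $\lambda$ is a pure infinity we have $supp(\lambda) = \{\infty\}$, i.e. $\lambda(\varepsilon) \to \infty$ as $\varepsilon \to 0$; thus the composite net $\varepsilon \mapsto g(\lambda(\varepsilon))$ samples $g$ near infinity, is well defined, and $g\circ\lambda = [g(\lambda(\varepsilon))]$ is a moderate generalized number because it is bounded by $C$.

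Next I would write out the representative net of the generalized transition probability. By definition,
$$\nu(g \circ \lambda) = \bigg[\varepsilon \longmapsto \frac{1}{\varepsilon}\int_0^\varepsilon g(\lambda(t))\,dt\bigg],$$
and the domination bound gives, for every $\varepsilon$,
$$\bigg| \frac{1}{\varepsilon}\int_0^\varepsilon g(\lambda(t))\,dt \bigg| \le \frac{1}{\varepsilon}\int_0^\varepsilon |g(\lambda(t))|\,dt \le \frac{1}{\varepsilon}\int_0^\varepsilon C\,dt = C.$$
Hence the net representing $\nu(g\circ\lambda)$ takes values in the compact disk of radius $C$. The identical estimate applies to $|g\circ\lambda|$, whose representative net $\varepsilon \mapsto \frac{1}{\varepsilon}\int_0^\varepsilon |g(\lambda(t))|\,dt$ is non-negative and bounded by $C$.

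To conclude I would invoke compactness. A bounded net indexed by $\varepsilon \in I$ admits, as $\varepsilon \to 0$, a sequence $\varepsilon_n \to 0$ along which it converges to some finite value $q$ (Bolzano--Weierstrass, applied to the values along any sequence tending to $0$). By the algebraic characterization of the support recalled in Section~2, such a $q$ satisfies $e\cdot\nu(g\circ\lambda) \approx e\cdot q$ for a suitable $e\in\mathcal{B}(\overline{\mathbb{R}})$, whence $q \in supp(\nu(g\circ\lambda))$. This shows $supp(\nu(g\circ\lambda))$ is a non-empty subset of $\C$ (indeed compact, contained in the disk of radius $C$ and not meeting $\infty$), and the same argument applied to the non-negative net yields that $supp(\nu(|g\circ\lambda|))$ is a non-empty subset of $[0, C]$.

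The argument is short, and its only genuine content is the passage from \emph{almost periodic} to \emph{bounded} via Theorem~\ref{fund}, followed by Bolzano--Weierstrass. The point worth stressing — and the reason the conclusion is mere non-emptiness of the support rather than the single mean value obtained in Corollary~\ref{gapf} — is that, without $g$ itself being almost periodic, the averaging net need not converge; one can only guarantee accumulation points, so $supp(\nu(g\circ\lambda))$ may be a genuine interval rather than a singleton. The main thing requiring care, which I would flag explicitly, is the implicit measurability and integrability of $t \mapsto g(\lambda(t))$ needed for the averages to be defined; this follows from the standing assumption that the relevant nets are integrable when viewed as functions on $I$, together with the boundedness just established.
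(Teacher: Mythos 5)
Your proof is correct and follows essentially the same route as the paper's: dominate the averaging net representing $\nu(g\circ\lambda)$ by a constant and conclude that a bounded net has accumulation points, hence non-empty support. The only difference is cosmetic --- you bound the averages by $\|f\|_{\infty}$ via Theorem~\ref{fund}, while the paper bounds them by $2M(f)+1$ using the convergence of $\frac{1}{\eta}\int_0^{\eta}f(\lambda(\epsilon))\,d\epsilon$ to $M(f)$; your bound is, if anything, the more elementary of the two.
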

 
\begin{proof}  We may suppose that $\ g$ is a positive function. For each $0<\eta <1$, sufficiently small,  we have that  $ 0\leq \frac{1}{\eta} \int \limits_0^{\eta}  g(\lambda(\varepsilon))d\epsilon\leq \frac{1}{\eta} \int \limits_0^{\eta}  f(\lambda(\varepsilon))d\epsilon \leq 2M(f)+1$. Hence, in the generalized environment, we have that $0\leq \nu(g\circ \lambda)\leq (2M(f)+1)\ $ and from this the result follows. \end{proof}

Let $\ f$ be a $\ C^{\infty}$ a.p.f. and $\nu\in \overline{\mathbb{R}}$ a positive infinitesimal.   For each $\ \varepsilon\in I$,  define the set  $\ M^{f,\nu}_{\varepsilon}=\{\tau_{\varepsilon}\in\mathbb{R}\ :\ \|f(x+\tau_{\varepsilon})-f(x)\|_{\infty}\leq \nu(\varepsilon)\}$, a set of translation numbers of $\ f$,  and consider the internal set  $\ M= [M_{\varepsilon}^{f,\nu}]$. Given $\ \tau\in M$, we have that $supp(\tau)\cap \mathbb{R}^*\neq \emptyset$ if and only if $\ f$ is a periodic function. Hence if $\ f$ is  non periodic then $\ M$ consists of  infinitesimals or infinities. If  $\ f$ is $\ C^{\infty} $, $\ \nu$ is a null and $\ M\neq \emptyset$ then the canonical image of $\ f$ in $\ \mathcal{G}({\mathbb{R}})$ is a  generalized periodic function and each non-zero element of $\ M$ is a generalized period of $\ f$. Such a periods of $\ f$ need not be an invertible element, but  if  $\ \tau\in M$ is an invertible  pure  infinity then $\ \frac{1}{\tau}\int\limits_0^{\tau}f(t)dt \approx M(f)$.  One could  start  with any moderate net of a.p. functions and not just with a constant net. A generalized function which has such nets as representatives we shall call  a {\it g.a.p.f.}, short for  {\it generalized a.p.f}.   It would be interesting to know if there are  non periodic a.p. functions which are generalized periodic. The same question can be posed for g.a.p.f. 

\section{Fock Space}

In  \cite{Colomarxiv, sponer1} Colombeau and Gsponer propose a setting  for QFT aiming to deal with the renormalization theory.  We collect some fact sabout this setting.  We will be using their notation and thus it will be useful for the reader  to survey their paper for a more precise understanding of their setting.

The Fock space, introduced by the  Soviet physicist Vladimir Aleksandrovich Fock, who did foundational work on quantum mechanics and quantum electrodynamics,   is the Hilbertian direct sum
\begin{equation} \mathbb{F}=\bigoplus_{n=0}^{+\infty}L^2_s((\mathbb{R}^3)^n),\end{equation}
where $L^2_s((\mathbb{R}^3)^0=\mathbb{C}, \  L^2_s((\mathbb{R}^3)^1=L^2(\mathbb{R}^3)$ and for $n\geq 2 \ L^2_s((\mathbb{R}^3)^n)$ is the closed subspace of the space $L^2((\mathbb{R}^3)^n)$ made of those  (complex valued) functions which are symmetric in their $n$ arguments in $\mathbb{R}^3.$ That is, an element $F$ of $\mathbb{F}$ is an infinite sequence $F=(f_0, f_1,\dots,f_n,\dots)$ such that 
\begin{equation} \|F\|^2:=|f_0|^2+\sum_{n=1}^{+\infty}\|f_n\|^2_{L^2(\mathbb{R}^{3n})}<+\infty.\end{equation}

They  introduce the generalized functions over the  Fock Space and prove that $S_\tau(t)$,  the scattering operator,  is a unitary operator in this generalized environment. According to them, this proof is actually due to Perron (see also \cite{unpublish}). We recall the following results of \cite{sponer}.

\begin{proposition}  The interacting field operator satisfies the formula.

$$\Phi(x,t)=(S_\tau(t))^{-1}\Phi_0(x,t) S_\tau(t)$$

\end{proposition}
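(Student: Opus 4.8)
The plan is to verify the conjugation identity by a differential-equation argument carried out entirely at the level of moderate nets of operators on the Fock space $\mathbb{F}$, and then to close it by a uniqueness theorem. Recall that in the Colombeau--Gsponer construction the scattering operator $S_\tau(t)$ is the generalized (interaction-picture) propagator solving $i\,\partial_t S_\tau(t) = H_{\mathrm{int}}(t)\,S_\tau(t)$ with $S_\tau(0)=I$, where $H_{\mathrm{int}}(t)$ is the interaction Hamiltonian written in the free picture and $H=H_0+H_{\mathrm{int}}$ is the full Hamiltonian; recall also that $S_\tau(t)$ has already been shown to be unitary, so that $(S_\tau(t))^{-1}=(S_\tau(t))^{*}$ as moderate nets, and that the free field obeys the Heisenberg equation $\partial_t\Phi_0 = i[H_0,\Phi_0]$.

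First I would set $\Psi:=(S_\tau(t))^{-1}\Phi_0(x,t)\,S_\tau(t)$ and differentiate in $t$. Using the product rule together with $\partial_t (S_\tau)^{-1}=i\,(S_\tau)^{-1}H_{\mathrm{int}}(t)$ (which follows from the evolution equation and $(S_\tau)^{-1}S_\tau=I$) and the free Heisenberg equation, the three resulting terms reassemble, after inserting $S_\tau(S_\tau)^{-1}=I$, into the single commutator $i\big[(S_\tau)^{-1}(H_0+H_{\mathrm{int}}(t))\,S_\tau,\ \Psi\big]$. Because $S_\tau(t)$ is precisely the operator that intertwines the free and the full dynamics, the conjugated generator $(S_\tau)^{-1}(H_0+H_{\mathrm{int}}(t))S_\tau$ collapses to the full Hamiltonian $H$, so that $\partial_t\Psi = i[H,\Psi]$. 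Evaluating at $t=0$ and using $S_\tau(0)=I$ gives $\Psi(x,0)=\Phi_0(x,0)=\Phi(x,0)$, so $\Psi$ and the interacting field $\Phi$ satisfy the same first-order evolution equation with the same initial data.

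The proof is then closed by uniqueness: both $\Psi$ and $\Phi$ solve $\partial_t X = i[H,X]$ with $X(x,0)=\Phi(x,0)$, and the Generalized Fixed Point Theorem available in this milieu (see \cite{juroliv}), which is a Picard--Lindel\"of argument valid for moderate nets, forces $\Psi=\Phi$. I expect the main obstacle to lie not in the formal algebra above but in its legitimation inside the generalized framework: one must check that $H_{\mathrm{int}}(t)$, the commutators $[H_0,\Phi_0]$ and $[H,\Psi]$, and the products $(S_\tau)^{-1}\Phi_0\,S_\tau$ are all represented by \emph{moderate} nets of operators on $\mathbb{F}$ and that every manipulation is independent of the chosen representatives modulo negligible nets, so that the computation descends to the quotient. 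This is exactly the point at which the Colombeau construction is indispensable, since $H_{\mathrm{int}}$ is the classically ill-defined object whose products are meaningless distributionally yet perfectly legitimate as moderate nets; establishing moderateness and the commutator estimates uniformly in the parameter $\varepsilon$ is the real content of the argument.
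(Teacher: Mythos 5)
The paper does not prove this proposition at all: it is recalled verbatim from Colombeau--Gsponer, and the text immediately after states that ``the detailed formal calculations for the proofs of propositions 4.1 and 4.2 are carried out in \cite{Colomarxiv}.'' So there is no in-paper argument to compare yours against; your proposal has to stand on its own, and as it stands it has one genuine logical gap. The step in which you pass from $\partial_t\Psi=i\bigl[(S_\tau)^{-1}(H_0+H_{\mathrm{int}}(t))S_\tau,\ \Psi\bigr]$ to $\partial_t\Psi=i[H,\Psi]$ is justified only by the phrase ``because $S_\tau(t)$ is precisely the operator that intertwines the free and the full dynamics.'' But in the setting of the paper $S_\tau(t)$ is \emph{defined} by the ODE of Proposition 4.2 (with initial condition $S_\tau(\tau)=id$, not $S_\tau(0)=I$ as you write), and the intertwining property $(S_\tau(t))^{-1}(H_0+H_{\mathrm{int}}(t))S_\tau(t)=H$ is equivalent to the identification $S_\tau(t)=e^{iH_0(t-\tau)}e^{-iH(t-\tau)}$, which is essentially the content of the proposition you are trying to prove. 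As written, the argument is circular. The repair is standard but must be made explicit: first show, by the same uniqueness theorem you invoke at the end, that $W(t):=e^{iH_0(t-\tau)}e^{-iH(t-\tau)}$ satisfies the ODE $\partial_tW=-iH_{\mathrm{int}}(t)W$, $W(\tau)=id$, hence $S_\tau=W$; the conjugation formula is then a one-line computation and the differential argument for $\Psi$ becomes unnecessary. Alternatively, keep your differential argument but prove the collapse of the conjugated generator directly from $S_\tau=W$ rather than asserting it.

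Two further remarks. First, the initial data should be matched at $t=\tau$ (the free and interacting fields coincide before the interaction is switched on), consistent with $S_\tau(\tau)=id$; evaluating at $t=0$ does not match the paper's conventions. Second, your closing paragraph correctly identifies where the real mathematical content lies in the generalized framework --- moderateness of $H_{\mathrm{int}}(t)$, of the commutators, and of the conjugated products, uniformly in $\varepsilon$ --- and this is indeed exactly what \cite{Colomarxiv, sponer1} supply and what this paper takes as given; but in your write-up it is announced as ``the real content'' without being carried out, so the proposal remains a formal sketch at the same level as the references rather than a complete proof.
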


\begin{proposition}  The scattering operator is solution of the ODE.
 
$$\partial_tS_\tau(t)=-i\frac{g}{N+1}\int_{x\in\mathbb{R}^3}(\Phi_0(x,t))^{N+1}dx \ S_\tau(t), \ \ S_\tau(\tau)=id $$

\end{proposition}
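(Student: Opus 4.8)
The plan is to exhibit $S_\tau(t)$ as a (regularized) Dyson series and to differentiate it termwise with respect to the upper endpoint $t$. Write $H(s):=\frac{g}{N+1}\int_{x\in\mathbb{R}^3}(\Phi_0(x,s))^{N+1}\,dx$ for the interaction Hamiltonian of the model. Because the free field $\Phi_0$ is regularized in the Colombeau--Gsponer construction, each $H(s)$ is a moderate net of operators on $\mathbb{F}$ with moderate bounds on the relevant invariant subspaces, so that all the products and integrals below make sense in the generalized milieu. The scattering operator is then the time-ordered exponential
$$S_\tau(t)=\sum_{n=0}^{\infty}(-i)^n\int\limits_{\tau\leq s_n\leq\cdots\leq s_1\leq t}H(s_1)\cdots H(s_n)\,ds_1\cdots ds_n,$$
equivalently the unique solution of the Volterra integral equation $S_\tau(t)=id-i\int_\tau^t H(s)\,S_\tau(s)\,ds$.

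First I would record the initial condition: at $t=\tau$ every summand with $n\geq 1$ is an integral over an empty simplex and hence vanishes, leaving only the $n=0$ term, so $S_\tau(\tau)=id$. Next I would differentiate the series in $t$. The variable $t$ occurs only as the outer limit of the $s_1$-integration in each term; applying the (generalized) fundamental theorem of calculus of Section~2 to the outermost integral, the derivative of the $n$-th summand equals $-i\,H(t)$ times the $(n-1)$-th summand, after relabelling $s_2\mapsto s_1',\dots,s_n\mapsto s_{n-1}'$. Summing over $n\geq 1$ collapses the series to
$$\partial_t S_\tau(t)=-i\,H(t)\,S_\tau(t)=-i\frac{g}{N+1}\int_{x\in\mathbb{R}^3}(\Phi_0(x,t))^{N+1}\,dx\; S_\tau(t),$$
which is the claimed ODE; note that $H(t)$ appears on the left, as it must. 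Equivalently, one may differentiate the Volterra integral equation directly and obtain the same identity in a single line.

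The main obstacle is analytic rather than formal: one must justify the convergence of the Dyson series and the termwise differentiation \emph{inside} the generalized, sharp-topology setting, and confirm that the resulting limit is again a moderate net. Concretely, I would use the moderateness of $H(s)$ to produce, for each regularizing parameter $\varepsilon$, a bound $\|H_\varepsilon(s)\|\leq C_\varepsilon$ on the subspace under consideration, whence the $n$-th term is dominated by $C_\varepsilon^{\,n}\,|t-\tau|^n/n!$ (the volume of the time-ordered simplex being $|t-\tau|^n/n!$). This yields locally uniform convergence of the net of partial sums together with its $t$-derivative, which is exactly what is needed to interchange $\partial_t$ with the infinite sum and to pass moderate estimates to the quotient. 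The delicate point is that $H(s)$ is an unbounded operator on $\mathbb{F}$ and is tamed only by the regularization, so every estimate has to be carried out at the level of the nets before descending to $\overline{\mathbb{C}}$; this is precisely where the generalized framework of the earlier sections, rather than a naive operator-norm argument on $\mathbb{F}$, carries the proof.
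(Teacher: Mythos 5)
The paper itself does not prove this proposition: it is recalled from Colombeau--Gsponer, and the text explicitly defers the ``detailed formal calculations'' to the reference \cite{Colomarxiv}. Your Dyson-series/Volterra-equation argument is essentially the standard route taken there (the ``Perron method'' of successive approximations mentioned in the paper is exactly the iteration that produces your time-ordered exponential), so the approach matches the cited source: the initial condition from the degenerate simplex, termwise differentiation of the outer integration limit, and the $|t-\tau|^n/n!$ domination from the simplex volume are all correct. Two small points to tighten: first, in the setting the paper actually adopts the algebra is built over $L(\mathbb{D})$, so the regularized $H_\varepsilon(s)$ are bounded on the relevant domain by construction and your moderateness estimates are available essentially for free, which makes the ``delicate point'' you flag less delicate than you suggest; second, you should say explicitly that $S_\tau(t)$ is \emph{defined} as the solution of the Volterra equation (as in Colombeau--Gsponer), since otherwise the proposition requires an extra step identifying your series with whatever object is called the scattering operator.
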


The detailed formal calculations for the proofs of propositions 4.1 and 4.2  are carried out  in \cite{Colomarxiv}.  Using this generalized setting over  the Fock Space,  Colombeau and Gsponer (see \cite{Colomarxiv, sponer}) propose how  to obtain numerical predictions of the transition probabilities, i.e.,  the probability that a state $F$ before interaction produces a state $G$ after interaction. Considering   an observable $O(x,t)$ as a product of field operators, then, after interaction,  the formula one gets is 
\begin{equation} O(x,t)=S_{\tau}(t)^{-1} O_0(x,t)S_{\tau}(t).\end{equation}

The formula $$|<F_1|O(x,t)F_2>|=|<F_1|S_\tau(t)^{-1}O_0(x,t)S_\tau(t)F_2>|= |<S_\tau(t)F_1,O_0(x,t)S_\tau(t)F_2>|$$ 

\noindent shows that $S_\tau(t) F_i$ are the states that give the results from experiments in the new free field theory after the interaction has ceased ($\tau$ in the past before interaction, $t$ in the future after interaction). Therefore the probability that a state $F$ before interaction would be transformed after interaction into a state $G$ is \ $|<G,S_\tau(t) F>|$,  which is therefore interpreted as the transition probability of $F$ into $G$ due to the interaction. A way to view  this transition probability is the following:  Let $\ u, v$ be orthogonal  vectors of norm one of a Hilbert space $\ {\mathbb{H}}$ and let $\ S$ be a unitary operator on $\ {\mathbb{H}}$. Then  one sees  $|<\vec{u},S\vec{v}>|$ as probability  in the following way:   as a probability by considering an average of the values $|<\vec{u},S(\epsilon)\vec{v}>|$ on a large number of  small values of $\epsilon$. For instance according to a classical interpretation of renormalization in the case of the anomalous magnetic moment of the electron various values of $\epsilon$ could represent various spontaneous creations  of virtual electron-positron pairs in the vacuum \cite{Penrose} section 26.9. In short,  the  transition probability should be

\begin{equation}
 \label{average}
\mathcal P_0(v\rightarrow u)=\lim_{\eta\rightarrow 0, \ N\rightarrow +\infty}\frac{1}{N}\sum_{i=1}^{N}|<\vec{u},S(\epsilon_i)\vec{v}>|, random \  \epsilon_i\in]0,\eta[
\end{equation} 

\noindent  provided this limit exists.   Note that this average is naturally related to our notion of support of a generalized object.  A non-discrete way to express this average is

\begin{equation}
 \label{average1}
	\mathcal P_0(v\rightarrow u)=\lim_{\eta\rightarrow0}\frac{1}{\eta}\int_{0}^\eta |\langle u,S_{\epsilon} v\rangle|d\epsilon. 
\end{equation}

\noindent This leads naturally to the definition of  the {\it generalized  transition probability} as

$$	\mathcal P(v\rightarrow u)=\bigg[\varepsilon\longrightarrow  \frac{1}{\varepsilon}\int_{0}^\varepsilon |\langle u,S_t v\rangle|dt\bigg]$$

\noindent Hence the existence of $\ \mathcal P_0(v\rightarrow u)$ is equivalent to 

$$\mathcal P(v\rightarrow u)\approx \mathcal P_0(v\rightarrow u)$$

\noindent or, equivalently,

$$supp(\mathcal P(v\rightarrow u))=\{\mathcal P_0(v\rightarrow u)\}$$

\noindent Since, in general,  this obviously need not be the case, this leads us to define any element of $\ supp(\mathcal P(v\rightarrow u))$ as a transition probability. If we suppose that $\ S_t$ depends continuously on $\ t$ and that  $\ supp(\mathcal P(v\rightarrow u))=\{\mathcal P_0(v\rightarrow u)\}$ then we have that

$$	\mathcal P(v\rightarrow u)\approx \bigg|  \lim_{\varepsilon\rightarrow0} \frac{1}{\varepsilon}\int_{0}^\varepsilon \langle u,S_t v\rangle dt\bigg|=\mathcal{P}_0(v\rightarrow u)$$

 Henceforth, in this paper, the existence of a transition probability will be equal to the the support of the generalized transition probability being not empty.  Colombeau and Gsponer  require that the support of the generalized transition probability has a unique element.
 
 Here we deal with the Colombeau algebra defined over $L(\mathbb{D})$ (see \cite[Section 1.5.2]{sponer1}) and thus all operators involved are, by  definition, bounded operators. This can be justified since there does  not exist a structure containing the unbounded operators in which one can calculate their norm which is  necessary in the construction of a Colombeau algebra. The construction of  $L(\mathbb{D})$ probably  contains most  important operators used  in Physics.


   \section{Transition Probabilities}
   \label{finitedim}
   
    In this section we study self adjoint operators  over $\OK^n$ and infinite dimensional Hilbert spaces.   We start looking at the finite dimensional case. In this case, a self adjoint operator corresponds to  a hermintian matrix $A\in M_n(\OK)$. Hence  we are interested in matrices with coefficients in  $\OK_{as}$  or $\tilde{\mathbb{K}}_c$  and the existence of the transition probability $\ \nu(A)= |<\vec{u},exp(iA(\epsilon))\vec{v}>|$ where $u, v$ are unitary vectors. As notice before, this need not be the case and thus we will study the support of the generalized transition probability.  Our first result gives  a condition for the existence of the transition probability.


If  $\ A=(a_{ij})$ is a matrix, we denote by  $\| A\|$ its euclidean norm which is also known as its Frobenius norm. The Frobenius norm can be extended to the generalized environment in a natural way. We shall use the same notation for this extended norm  (see \cite{AraFerJu, JJO}). We denote by $ \ S_n$ the permutation group on $\ n$ symbols. We now state a result due to Hoffman-Wielandt fundamental for what comes in the sequel. We refere the reader to \cite{bhatia, hoffman, horman1} for more information.

\begin{theorem}[Hoffman-Wielandt]
\label{hoffman}
Let $\A$ and $B$ be two normal complex matrices with spectra $\ \{\alpha_1,\dots,\alpha_n\}$ and   $\ \{\beta_1,\dots,\beta_n\}$, respectively. Then  $\min\limits_{\sigma \in S_{n}}\ \Big(\sum\limits_{1}^{n}|\alpha_i-\beta_{\sigma(i)}|^2\Big)\leq \| A-B\|^2$.
 
 \end{theorem}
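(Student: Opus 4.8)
The plan is to reduce the inequality, via the spectral theorem and the unitary invariance of the Frobenius norm, to a linear-optimization statement over doubly stochastic matrices, and then to invoke Birkhoff's theorem to force the optimum onto a permutation.

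First I would exploit normality: since $A$ and $B$ are normal they are unitarily diagonalizable, so I write $A = U D_A U^*$ and $B = V D_B V^*$ with $U, V$ unitary, $D_A = \mathrm{diag}(\alpha_1, \dots, \alpha_n)$ and $D_B = \mathrm{diag}(\beta_1, \dots, \beta_n)$. Because $\|\cdot\|$ is invariant under left and right multiplication by unitaries, setting $W := U^* V$ (again unitary) gives $\|A - B\|^2 = \|U^*(A-B)V\|^2 = \|D_A W - W D_B\|^2$, using $U^* A V = D_A W$ and $U^* B V = W D_B$. Computing the $(i,j)$ entry of $D_A W - W D_B$ yields $w_{ij}(\alpha_i - \beta_j)$, whence
$$\|A - B\|^2 = \sum_{i,j} |w_{ij}|^2\, |\alpha_i - \beta_j|^2.$$

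Next I would record that $S := (|w_{ij}|^2)$ is doubly stochastic: since $W$ is unitary, each of its rows and columns has Euclidean norm one, so $\sum_j |w_{ij}|^2 = \sum_i |w_{ij}|^2 = 1$. The right-hand side above is therefore the value at $S$ of the linear functional $S \mapsto \sum_{i,j} s_{ij}\, c_{ij}$, where $c_{ij} := |\alpha_i - \beta_j|^2$, evaluated on the polytope of doubly stochastic matrices. Hence $\|A-B\|^2$ is bounded below by the minimum of this functional over all doubly stochastic $S$.

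The crux of the argument, and the only nontrivial ingredient, is Birkhoff's theorem: the doubly stochastic matrices form a convex polytope whose extreme points are exactly the permutation matrices. A linear functional on a polytope attains its minimum at a vertex, so the minimum of $\sum_{i,j} s_{ij} c_{ij}$ is attained at some permutation matrix $P_\sigma$, where it equals $\sum_i c_{i\sigma(i)} = \sum_i |\alpha_i - \beta_{\sigma(i)}|^2$. Combining the two displays gives $\min_{\sigma \in S_n} \sum_{i=1}^n |\alpha_i - \beta_{\sigma(i)}|^2 \leq \|A-B\|^2$, as claimed. The main obstacle is conceptual rather than computational: the permutation in the statement is not produced directly by the spectral data but emerges only through the extreme-point structure of the Birkhoff polytope; everything else is bookkeeping with the unitary invariance of the Frobenius norm.
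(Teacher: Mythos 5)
Your proof is correct and complete: the reduction $\|A-B\|^2=\sum_{i,j}|w_{ij}|^2|\alpha_i-\beta_j|^2$ via unitary invariance, the observation that $(|w_{ij}|^2)$ is doubly stochastic, and the passage to a permutation matrix through Birkhoff's theorem together constitute the classical Hoffman--Wielandt argument. Note that the paper itself offers no proof of this statement --- it is quoted as a known result with references to Bhatia and to the original Hoffman--Wielandt article --- and your argument is precisely the standard proof found in those sources, so there is nothing to reconcile.
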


Given a generalized function $\ f$ , we denote by $R_f$ the set of solutions of the equation $\ f(x)=0$ in $\ \overline{\mathbb C}$.  In case $\ f(x)$ is a generalized polinomial, we have the following result from \cite{ver2}.

\begin{lemma}
\label{ver}
Let $f(x)\in\overline{\mathbb{C}}[x]$ be monic. There exist  $\ \lambda_1,\dots,\lambda_n\in \overline{\mathbb{C}}$ such that $f(x)=(\lambda-\lambda_1)\cdots(\lambda-\lambda_n)$ and $R_f =interl(\{\lambda_1,\dots,\lambda_n\})$.

\end{lemma}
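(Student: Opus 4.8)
The plan is to prove the statement in three stages: first establish the factorization, then prove each of the two inclusions in the claimed equality $R_f = interl(\{\lambda_1,\dots,\lambda_n\})$ separately. Throughout I would work with representatives and pass to $\OC$ at the end.

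First I would fix a representative net $(f_\varepsilon)$ of $f$, so that each $f_\varepsilon(x)=x^n+c_{n-1,\varepsilon}x^{n-1}+\cdots+c_{0,\varepsilon}$ is a genuine monic polynomial over $\C$ whose coefficients form moderate nets. By the Fundamental Theorem of Algebra each $f_\varepsilon$ splits as $\prod_{i=1}^n(x-\lambda_{i,\varepsilon})$; ordering the roots at each $\varepsilon$ by a fixed rule (say lexicographically in real and imaginary parts) makes the selection $\varepsilon\mapsto(\lambda_{1,\varepsilon},\dots,\lambda_{n,\varepsilon})$ well defined. The Cauchy bound $|\lambda_{i,\varepsilon}|\leq 1+\max_j|c_{j,\varepsilon}|$ shows the roots are moderate, so each $\lambda_i=[\lambda_{i,\varepsilon}]\in\OC$. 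Since the elementary symmetric functions of the $\lambda_{i,\varepsilon}$ equal $\pm c_{j,\varepsilon}$ pointwise, the identity $f(x)=(x-\lambda_1)\cdots(x-\lambda_n)$ holds in $\OC[x]$, which gives the desired factorization.

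For the inclusion $interl(\{\lambda_1,\dots,\lambda_n\})\subseteq R_f$ I would use that any $\mu$ in the interleaving can be written as $\mu=\sum_k e_k\lambda_k$ for a complete orthogonal family of idempotents $e_1,\dots,e_n\in\mathcal{B}(\OR)$ with $\sum_k e_k=1$, so that $e_k\mu=e_k\lambda_k$. A one-line induction, $e_k\mu^m=e_k\lambda_k^m$ for all $m$, then yields $e_k f(\mu)=e_k f(\lambda_k)=0$ for each $k$, and summing over $k$ gives $f(\mu)=0$, i.e. $\mu\in R_f$. This is the routine direction.

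The hard part will be the reverse inclusion $R_f\subseteq interl(\{\lambda_1,\dots,\lambda_n\})$, and the main obstacle is converting a single global negligibility estimate into a genuine partition of the parameter space $I$. Given $\mu=[\mu_\varepsilon]$ with $f(\mu)=0$, the product $\prod_{i=1}^n(\mu_\varepsilon-\lambda_{i,\varepsilon})=f_\varepsilon(\mu_\varepsilon)$ is negligible. For each $\varepsilon$ I would choose $i(\varepsilon)$ minimizing $|\mu_\varepsilon-\lambda_{i,\varepsilon}|$ (smallest index breaking ties); then $|\mu_\varepsilon-\lambda_{i(\varepsilon),\varepsilon}|^n$ is dominated by the negligible product, and since $\mathcal{N}(\C)$ is stable under $n$-th roots the nearest-root difference is itself negligible. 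Partitioning $I$ into the sets $S_k=\{\varepsilon : i(\varepsilon)=k\}$ produces idempotents $e_k=[\mathbf{1}_{S_k}]$ that are orthogonal and satisfy $\sum_k e_k=1$, and by construction $e_k(\mu-\lambda_k)$ is negligible, i.e. $e_k\mu=e_k\lambda_k$ in $\OC$. Hence $\mu=\sum_k e_k\mu=\sum_k e_k\lambda_k$, which is precisely an element of the finite interleaving of $\{\lambda_1,\dots,\lambda_n\}$. The points needing care are the stability of $\mathcal{N}(\C)$ under $n$-th roots and the verification that the $S_k$ induce legitimate idempotents of $\mathcal{B}(\OR)$ realizing the interleaving in the sense of \cite{OV}; both remain clean here because only finitely many pieces occur, so no countable gluing or convergence issue arises.
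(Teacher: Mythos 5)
Your argument is correct. Note, however, that the paper does not prove this lemma at all --- it is quoted verbatim as an imported result from Vernaeve's work on Banach $\widetilde{\mathbb{C}}$-algebras --- so there is no in-paper proof to compare against; your route (pointwise factorization with the Cauchy bound giving moderateness of the roots, the idempotent computation $e_k f(\mu)=e_k f(\lambda_k)=0$ for one inclusion, and the nearest-root selection $i(\varepsilon)$ with $n$-th-root stability of $\mathcal{N}(\mathbb{C})$ plus the partition $S_k=\{\varepsilon : i(\varepsilon)=k\}$ for the other) is essentially the standard proof found in that reference.
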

  
  If$\ A$ is a square matrix, we denote by $\ \chi_A(\lambda)$ its characteristic polinomial over $\  \overline{\mathbb{C}}$. The spectrum of $\ A$ is the set of eigenvalues of $\ A$, i.e., $\ R_{\chi_A}$. We refer the reader to \cite{horman1, ver2} for  excellent explanations with respect to the subject of eigenvalues. For our purpose, we recall that $\lambda$ is an eigenvalue of $\ A$ if and only if $\ \lambda \in Spec(A)$ (see \cite[Proposition 3.20]{horman1}). This is equivalent to the following: there exists $\ v=(v_1, \cdots v_n) \in  \overline{\mathbb{C}}^n$ such that $\ Av=\lambda v$ and $Span_{ \overline{\mathbb{C}}}[v_1,\cdots v_n]= \overline{\mathbb{C}}$. Such a vector is called a {\it free vector}.   We can now state two consequences of the previous lemma   which can be found in \cite{horman1, ver2}.
  
  \begin{theorem}
  \label{horman}
  Let $\A $ be an $n\times n$-matrix with coefficients in $\ \overline{\mathbb C}$. Then theres exists $\ \lambda_1,\dots,\lambda_n\in \overline{\mathbb C}$, eigenvalues of $\ A$ such that  $\ \chi_A(\lambda)=(\lambda-\lambda_1)\cdots(\lambda-\lambda_n)$ is the characteristic polinomial of $\ A$ and the spectrum of $\ A$ is equal to $\ interl(\{\lambda_1,\dots,\lambda_n\})$.
  
  \end{theorem}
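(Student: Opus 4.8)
The plan is to derive the statement as a direct application of Lemma~\ref{ver} to the characteristic polynomial $\chi_A(\lambda)=\det(\lambda I - A)$, after two preliminary checks: that $\chi_A$ is monic of degree $n$ over $\overline{\mathbb{C}}$, and that the roots the lemma produces are genuine eigenvalues in the strong sense required here. First I would verify that $\chi_A(\lambda)\in\overline{\mathbb{C}}[\lambda]$ is monic of degree $n$. Since $\overline{\mathbb{C}}$ is a commutative ring, $\det(\lambda I - A)$ may be expanded by the usual permutation (Leibniz) formula; a summand has $\lambda$-degree $n$ only when every factor contributes a $\lambda$, which forces the permutation to be the identity and gives the summand $(\lambda-a_{11})\cdots(\lambda-a_{nn})$, with leading coefficient $1$. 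This is a purely formal identity valid over any commutative ring, so no generalized-analytic subtlety enters at this step.

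Next I would apply Lemma~\ref{ver} with $f=\chi_A$. This yields $\lambda_1,\dots,\lambda_n\in\overline{\mathbb{C}}$ with $\chi_A(\lambda)=(\lambda-\lambda_1)\cdots(\lambda-\lambda_n)$ and $R_{\chi_A}=interl(\{\lambda_1,\dots,\lambda_n\})$. Because the spectrum was defined as $Spec(A)=R_{\chi_A}$, this immediately gives $Spec(A)=interl(\{\lambda_1,\dots,\lambda_n\})$, which is the second assertion of the theorem, and the claimed factorization is exactly what the lemma provides. It then remains only to check that each $\lambda_i$ deserves to be called an eigenvalue: substituting $\lambda=\lambda_i$ annihilates the $i$-th factor, so $\chi_A(\lambda_i)=0$ and hence $\lambda_i\in R_{\chi_A}=Spec(A)$.

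The step I expect to be the crux is bridging ``$\lambda_i$ is a root of $\chi_A$'' and ``$\lambda_i$ is an eigenvalue of $A$'' in the strong sense of possessing a \emph{free} eigenvector, since the examples in \cite{horman1} caution that in the generalized milieu a vanishing determinant need not produce a free vector in the kernel. I would settle this by invoking the characterization recalled immediately before the statement, namely that $\lambda$ is an eigenvalue of $A$ if and only if $\lambda\in Spec(A)$ (see \cite[Proposition 3.20]{horman1}). Applying it to each $\lambda_i\in Spec(A)$ furnishes a free vector $v_i$ with $Av_i=\lambda_i v_i$, so every $\lambda_i$ is a genuine eigenvalue and the argument is complete. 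The whole proof thus rests on Lemma~\ref{ver} together with this equivalence; all the substantive work is absorbed into those two previously established facts, and what remains is bookkeeping.
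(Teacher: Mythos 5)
Your proof is correct and follows exactly the route the paper takes: the paper states Theorem~\ref{horman} as a consequence of Lemma~\ref{ver} applied to $\chi_A$, citing \cite{horman1, ver2} without writing out details, and your argument (monicity of $\chi_A$ via the Leibniz expansion, the factorization and interleaving supplied by the lemma, and the passage from roots of $\chi_A$ to genuine eigenvalues with free eigenvectors via the equivalence recalled from \cite[Proposition 3.20]{horman1}) is precisely that derivation made explicit. You correctly identified that the only non-formal step is the root-to-eigenvector bridge, and you resolved it with the same cited fact the paper relies on.
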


  \begin{theorem}
  \label{horman1}
 Let $\A $ be an $n\times n$-hermitian matrix with coefficients in $\ \overline{\mathbb C}$. Then the spectrum of $\ A$ is contained in $\ \overline{\mathbb R}$. Moreover, if there exist  eigenvalues  $\ \lambda_1\geq\cdots\geq\lambda_n$  of $\ A$,  then they satisfy the conclusion of the previous  theorem and  $\ A$ is unintary equivalent to the  diagonal matrix $\ diag(\lambda_1,\dots,\lambda_n)$.
 
\end{theorem}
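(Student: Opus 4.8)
The plan is to reduce everything to the classical spectral theorem applied $\varepsilon$-wise and then lift the resulting data to $\OK$, using the Hoffman-Wielandt inequality of Theorem~\ref{hoffman} as the device that guarantees the lift is well defined. First I would fix a representative $(A_\varepsilon)$ of $A$ and replace it by $\frac{1}{2}(A_\varepsilon+A_\varepsilon^{*})$; since $A=A^{*}$ in $M_n(\OC)$ the net $A_\varepsilon-A_\varepsilon^{*}$ is negligible, so this yields another representative of $A$ in which each $A_\varepsilon$ is a genuine hermitian complex matrix. For every such $\varepsilon$ the classical spectral theorem supplies real eigenvalues $\lambda_1(\varepsilon)\geq\cdots\geq\lambda_n(\varepsilon)$ together with a unitary $U_\varepsilon$ satisfying $U_\varepsilon^{*}A_\varepsilon U_\varepsilon=diag(\lambda_1(\varepsilon),\dots,\lambda_n(\varepsilon))$, whose columns are unit vectors and hence free eigenvectors.

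Next I would check moderateness. Since $A$ is moderate there is $N$ with $\|A_\varepsilon\|\leq\alpha^{-N}$, and as $|\lambda_i(\varepsilon)|\leq\|A_\varepsilon\|$ each net $(\lambda_i(\varepsilon))$ is moderate; the entries of the unitary $U_\varepsilon$ are bounded by $1$, so $(U_\varepsilon)$ is moderate as well. Hence $\lambda_i:=[\lambda_i(\varepsilon)]\in\OR$ and $U:=[U_\varepsilon]\in M_n(\OC)$ are well-defined generalized objects, $U$ is unitary because $U^{*}U=[U_\varepsilon^{*}U_\varepsilon]=[I]=I$, and passing to classes in $U_\varepsilon^{*}A_\varepsilon U_\varepsilon=diag(\lambda_i(\varepsilon))$ gives $U^{*}AU=diag(\lambda_1,\dots,\lambda_n)$, the asserted unitary equivalence; the ordering $\lambda_1\geq\cdots\geq\lambda_n$ holds in $\OR$ because it holds for every $\varepsilon$. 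Note that this construction in fact always produces such an ordered family, so the hypothesis of the ``moreover'' part is always satisfiable.

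For the spectral part I would use that $det(\lambda I-A_\varepsilon)=\prod_{i}(\lambda-\lambda_i(\varepsilon))$ holds $\varepsilon$-wise, so $\chi_A(\lambda)=\prod_i(\lambda-\lambda_i)$ in $\OC[\lambda]$; by Lemma~\ref{ver} and Theorem~\ref{horman} this identifies the $\lambda_i$ as eigenvalues with $Spec(A)=interl(\{\lambda_1,\dots,\lambda_n\})$. Because each $\lambda_i\in\OR$ and an interleaving of generalized reals is again real (every element of the interleaving has the form $\sum_\mu e_\mu\lambda_{j(\mu)}$ with $e_\mu\in\mathcal{B}(\OR)$ and $\lambda_{j(\mu)}\in\OR$), we obtain $Spec(A)\subset\OR$. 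If one starts from an arbitrary ordered family $\mu_1\geq\cdots\geq\mu_n$ of eigenvalues as in the hypothesis, then each $\mu_i\in Spec(A)=interl(\{\lambda_j\})$, and the total ordering together with uniqueness of ordered eigenvalues forces $\mu_i=\lambda_i$, so the conclusions transfer verbatim to the given family.

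The step I expect to be the main obstacle is the well-definedness of the lifted eigenvalues $\lambda_i\in\OR$, that is, their independence of the chosen representative (and, relatedly, the identification of the hypothesised eigenvalues with the constructed ones). This is exactly what Hoffman-Wielandt delivers: if $(A_\varepsilon)$ and $(A'_\varepsilon)$ are two hermitian representatives of $A$ with ordered eigenvalue nets $(\lambda_i(\varepsilon))$ and $(\lambda'_i(\varepsilon))$, then $A_\varepsilon-A'_\varepsilon$ is negligible and Theorem~\ref{hoffman} gives $\min_{\sigma\in S_n}\sum_i|\lambda_i(\varepsilon)-\lambda'_{\sigma(i)}(\varepsilon)|^2\leq\|A_\varepsilon-A'_\varepsilon\|^2$; since the monotone pairing realizes this minimum for sorted sequences, $\lambda_i(\varepsilon)-\lambda'_i(\varepsilon)$ is negligible and $\lambda_i$ is intrinsic. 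The one point to watch is that the diagonalizing unitary $U$ need not be representative-independent, but the statement only requires the existence of a single unitary $U\in M_n(\OC)$, and the construction above exhibits one.
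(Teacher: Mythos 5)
The paper does not actually prove this statement: it is quoted from \cite{horman1, ver2} with no argument supplied, so there is nothing internal to compare against. Your $\varepsilon$-wise construction is the standard route and it correctly establishes the substantive content. Symmetrizing the representative, applying the classical spectral theorem for each $\varepsilon$, and observing that the sorted eigenvalue nets are dominated by $\|A_\varepsilon\|$ while the columns of $U_\varepsilon$ are unit vectors (hence automatically moderate and free) is exactly right; passing to classes gives a unitary $U\in M_n(\OC)$ with $U^{*}AU=diag(\lambda_1,\dots,\lambda_n)$, $\lambda_i\in\OR$, $\lambda_1\geq\cdots\geq\lambda_n$. Your use of Hoffman--Wielandt together with the rearrangement fact (the sorted-to-sorted matching attains the minimum over $S_n$) to show the sorted eigenvalues are independent of the hermitian representative is also correct and is the one genuinely nontrivial verification. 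The deduction that $Spec(A)=interl(\{\lambda_1,\dots,\lambda_n\})\subset\OR$, because an interleaving $\sum_\mu e_\mu\lambda_{j(\mu)}$ of elements of $\OR$ lies in $\OR$, is fine modulo the paper's own reliance on Theorem~\ref{horman}.

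The genuine gap is your last step, the claim that ``the total ordering together with uniqueness of ordered eigenvalues forces $\mu_i=\lambda_i$'' for an \emph{arbitrary} ordered family of eigenvalues. There is no such uniqueness. Take $A=diag(1,-1)\in M_2(\OR)$ and a nontrivial idempotent $e\in\mathcal{B}(\OR)$; then $\mu:=2e-1$ is an eigenvalue of $A$ with free eigenvector $(e,1-e)$ (its norm is $e+(1-e)=1$), and the family $\mu_1=\mu_2=2e-1$ satisfies $\mu_1\geq\mu_2$, yet $(\lambda-\mu_1)(\lambda-\mu_2)\neq\chi_A(\lambda)$ and $A$ is not unitarily equivalent to $(2e-1)\cdot Id$ (compare traces). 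So an arbitrary weakly ordered family of eigenvalues need not coincide with the sorted family, and the ``moreover'' clause is false under that literal reading. The statement has to be understood as asserting the \emph{existence} of an ordered family with the stated properties --- namely the $\varepsilon$-wise sorted one, which your construction produces --- rather than a property of every ordered family of eigenvalues; your proof establishes the former completely but cannot, and should not try to, establish the latter.
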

 
 If$\ A$ is an $n\times n$-matrix with coefficients in $\ \overline{\mathbb C}$  and     $\ \{ \lambda_1,\dots,\lambda_n\}\subset \overline{\mathbb C}$ is contained in the $\ Spec(A)$ is as in one of the previous theorems than we say that   $\ \{ \lambda_1,\dots,\lambda_n\in \overline{\mathbb C}\}$  is a {\it  generating set} for $\ Spec(A)$.  

 Let$\ A$ is an $n\times n$-matrix with coefficients in $\ \overline{\mathbb C}$  and     $\ \{ \lambda_1,\dots,\lambda_n\}\subset \overline{\mathbb C}$  a generating set.  A {\it permutation net} is a map  $\ \sigma : I\longrightarrow S_n$.  Define the element $\ \lambda_i^{\sigma}$ as the element whose representative is   $\ \lambda_i^{\sigma}(\varepsilon):= \lambda_{\sigma_{\varepsilon}(i)}(\varepsilon)$. For each $1\leq j\leq n$ let $\ S_{ij} =\{\varepsilon\in I : \sigma_{\varepsilon}(i) = j\}$ and let $\ e_{ij}$ be the idempotent generated by $\ S_{ij}$. Then  $\ \lambda_i^{\sigma}=\sum\limits_j e_{ij}\cdot \lambda_j\in Spec(A)$ and $ \{e_{i1},\cdots e_{in}\}$ is a complete set of ortogonal idempotents.  This proves that $\ \lambda_i^{\sigma}$ is a well defined element. Moreover, if the matrix $(e_{ij})$ is invertable then $\ \{\lambda_1^{\sigma},\cdots \lambda_n^{\sigma}\}$ is  a generating set for $\ Spec(A)$.

\begin{theorem}
\label{supp}
	Let  $A \in M_n({\overline{\mathbb K}}_{as})$ be a hermitian matrix   with   $supp(A)=\{ A_0\}$ and let   $ \{\lambda_1,\dots,\lambda_n\}$ be a generating set for $\ Spec(A)$.  Then there exists a permutation net $\ \sigma$  such  that $\{ \lambda_1^{\sigma}, \cdots, \lambda_n^{\sigma}\}$ is a generating set for $\ Spec(A)$, $\lambda_i^{\sigma}\in {\overline{\mathbb K}}_{as}\ \forall i\ $  and    $ \{supp(\lambda_i^{\sigma}),\cdots, supp(\lambda_n^{\sigma})\} = Spec(A_0)$.  \end{theorem}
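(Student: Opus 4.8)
The plan is to turn the per-parameter Hoffman--Wielandt matching between the eigenvalues of the net $(A_\varepsilon)$ and those of the classical limit $A_0$ into a single permutation net $\sigma$, and then to read off the three required properties directly from the resulting convergence. First I would fix a representative $(A_\varepsilon)$ of $A$. Since $A$ is hermitian we may assume each $A_\varepsilon$ is hermitian, replacing it if necessary by $(A_\varepsilon+A_\varepsilon^*)/2$, which changes it only by a negligible net and makes $A_\varepsilon$ normal. Because the entries of $A$ lie in $\OK_{as}$ the net $(A_\varepsilon)$ is bounded for small $\varepsilon$, and $supp(A)=\{A_0\}$ forces $A_0$ to be its unique accumulation point; a bounded net with a single accumulation point converges, so $A_\varepsilon\to A_0$ and in particular $\|A_\varepsilon-A_0\|\to 0$. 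The limit $A_0$ is hermitian, so by Theorem~\ref{horman1} its spectrum $Spec(A_0)=\{\mu_1,\dots,\mu_n\}\subset\R$ (listed with multiplicity) is real. Using Theorem~\ref{horman}, the elementary symmetric functions of the given generating set $\{\lambda_1,\dots,\lambda_n\}$ coincide with the coefficients of $\chi_A$, so I may take the representatives $\lambda_i(\varepsilon)$ to be the eigenvalues of $A_\varepsilon$ up to a negligible net.

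Next, for each $\varepsilon$ I would apply Theorem~\ref{hoffman} to the two normal matrices $A_0$ and $A_\varepsilon$, with spectra $\{\mu_i\}$ and $\{\lambda_i(\varepsilon)\}$, to select a permutation $\sigma_\varepsilon\in S_n$ attaining the minimum, so that $\sum_{i=1}^{n}|\mu_i-\lambda_{\sigma_\varepsilon(i)}(\varepsilon)|^2\leq\|A_0-A_\varepsilon\|^2$. The assignment $\varepsilon\mapsto\sigma_\varepsilon$ is exactly a permutation net $\sigma:I\to S_n$, and then $\lambda_i^{\sigma}(\varepsilon)=\lambda_{\sigma_\varepsilon(i)}(\varepsilon)$ obeys $|\lambda_i^{\sigma}(\varepsilon)-\mu_i|\to 0$ as $\varepsilon\to 0$, since the right-hand side of the displayed bound tends to $0$. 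Hence $\lambda_i^{\sigma}\approx\mu_i\in\R$, which immediately yields $\lambda_i^{\sigma}\in\OK_{as}$ and $supp(\lambda_i^{\sigma})=\{\mu_i\}$ for every $i$; taking the union of these supports recovers precisely $Spec(A_0)$.

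It remains to verify that $\{\lambda_1^{\sigma},\dots,\lambda_n^{\sigma}\}$ is again a generating set. By the discussion preceding the statement, each $\lambda_i^{\sigma}$ lies in $Spec(A)$ and it suffices to check that the idempotent matrix $(e_{ij})$ attached to $\sigma$ is invertible. For each $\varepsilon$ the numerical matrix with $(i,j)$-entry equal to $1$ when $\sigma_\varepsilon(i)=j$ and $0$ otherwise is the permutation matrix of $\sigma_\varepsilon$, whose determinant equals $\pm 1$; consequently $\det(e_{ij})$ admits a representative of constant modulus $1$ and is therefore a unit of $\OK$. Thus $(e_{ij})$ is invertible, and the cited criterion gives that $\{\lambda_1^{\sigma},\dots,\lambda_n^{\sigma}\}$ generates $Spec(A)$, completing all three claims.

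I expect the main obstacle to be conceptual rather than computational: recognizing that the classical Hoffman--Wielandt inequality is exactly the \emph{uniform} control needed to force each reshuffled eigenvalue net $\lambda_i^{\sigma}$ to converge, and that the possibly wild dependence of the minimizing permutations $\sigma_\varepsilon$ on $\varepsilon$ is harmless because a permutation net carries no measurability or regularity requirement. A secondary technical point requiring care is the passage from the abstract generating set to genuine eigenvalue representatives of $A_\varepsilon$, absorbing the negligible discrepancy so that it does not affect the limits, together with the verification that the generalized permutation matrix $(e_{ij})$ is invertible, which is what upgrades ``each $\lambda_i^{\sigma}$ is an eigenvalue'' to ``they jointly form a generating set.''
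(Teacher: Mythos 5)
Your proposal is correct and follows essentially the same route as the paper: apply the Hoffman--Wielandt inequality parameterwise to $A_\varepsilon$ and $A_0$ to extract a permutation net $\sigma$ with $|\lambda_i^{\sigma}-\mu_i|\leq\|A-A_0\|\approx 0$, from which association to $\mu_i$, membership in $\OK_{as}$, and the support identity all follow. The only divergence is in the final generating-set step, where the paper simply observes that $\prod_i(\lambda-\lambda_i^{\sigma}(\varepsilon))=\prod_i(\lambda-\lambda_i(\varepsilon))$ for every $\varepsilon$, so the reshuffled family still factors $\chi_A$, while you invoke the invertibility of the idempotent matrix $(e_{ij})$ via the determinant of the permutation matrices; both are valid, and the paper's pointwise-product observation is marginally more direct.
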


\begin{proof} Let  $\ supp(A_0)=\{ \mu_1,\dots,\mu_n\}$ as an ordered set.  Since $A$ is hermitian and $A\approx A_0$ it follows that $\ A_0$ is hermitian and we may choose a  representative for $\ A$ such that $\ A_{\varepsilon}$ is hermitian for all $\ \varepsilon\in I$. Applying the  Hoffman-Wielandt's   Theorem, there exists a permutation net $\ \sigma$ such that

\begin{equation}
	|\lambda_i^{\sigma} -\mu_i|\leq \|A-A_0\|\approx 0 \label{eq1}
	\end{equation}
\vspace{.5cm}
\noindent from which it follows that $supp(\lambda_i^{\sigma})= \mu_i$. Since  $\lambda_i^{\sigma}  \in Spec(A)$, and $\ \prod\limits_i (\lambda - \lambda_i^{\sigma}(\varepsilon))=\prod\limits_i(\lambda - \lambda_i(\varepsilon))$ for all $\ \varepsilon$, we have that $\ \prod\limits_i (\lambda - \lambda_i^{\sigma})=\chi_A(\lambda)$ and hence   the theorem is proved.
\end{proof}

\begin{corollary}
\label{eigenvalue}
	Let  $\ A\approx A_0\in M_n({\mathbb{C}})$ and $\lambda\in Spec(A)$.   Then   $supp(\lambda)\subseteq Spec(A_0)$ with  equality  if $\lambda =\sum\limits_i e_i\cdot \lambda_i^{\sigma}$, where $\{e_1,\cdots,e_n\}$ is a complete set of non-trivial orthogonal idempotents and the $\ \lambda_i^{\sigma}$ are given by the theorem.  \end{corollary}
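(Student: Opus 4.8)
The plan is to treat the two assertions separately, proving the inclusion $supp(\lambda)\subseteq Spec(A_0)$ for an arbitrary eigenvalue and then the reverse inclusion under the stated hypothesis on $\lambda$.

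First I would establish the inclusion purely by continuity of the determinant. Fix a representative $(A_\varepsilon)$ with $A_\varepsilon\to A_0$ (possible since $A\approx A_0$) and a representative $(\lambda_\varepsilon)$ of $\lambda$. Because $\lambda\in Spec(A)$, as recalled in the preliminaries one has $\det(\lambda I-A)=0$ in $\overline{\mathbb{C}}$, so the net $\det(\lambda_\varepsilon I-A_\varepsilon)$ is negligible and in particular tends to $0$. Now take any $q_0\in supp(\lambda)$: by definition there is a sequence $\varepsilon_n\to 0$ with $\lambda_{\varepsilon_n}\to q_0$. Along this sequence $A_{\varepsilon_n}\to A_0$ as well, and continuity of the polynomial map $(\mu,M)\mapsto \det(\mu I-M)$ gives $\det(q_0 I-A_0)=\lim_n \det(\lambda_{\varepsilon_n} I-A_{\varepsilon_n})=0$. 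Hence $q_0$ is a classical eigenvalue of $A_0$, i.e. $q_0\in Spec(A_0)$, proving the inclusion. Note this half needs no hypothesis beyond $A\approx A_0$ and $\lambda\in Spec(A)$.

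For the equality I would invoke Theorem~\ref{supp}. Writing $supp(A_0)=\{\mu_1,\dots,\mu_n\}$, the theorem produces the permutation net $\sigma$ for which $supp(\lambda_i^{\sigma})=\{\mu_i\}$ and $\{supp(\lambda_1^{\sigma}),\dots,supp(\lambda_n^{\sigma})\}=Spec(A_0)$. It therefore suffices to show that for $\lambda=\sum_i e_i\cdot\lambda_i^{\sigma}$, with a complete set of non-trivial orthogonal idempotents $\{e_1,\dots,e_n\}$, one has $supp(\lambda)=\bigcup_i supp(\lambda_i^{\sigma})$. Each $e_i$ is represented by the indicator of a subset $S_i\subseteq I$; completeness and orthogonality mean the $S_i$ partition a punctured neighbourhood of $0$, so that for $\varepsilon\in S_i$ a suitable representative satisfies $\lambda_\varepsilon=(\lambda_i^{\sigma})_\varepsilon$, while non-triviality of $e_i$ forces $0$ to be an accumulation point of $S_i$. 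The inclusion $Spec(A_0)\subseteq supp(\lambda)$ then follows by choosing, for each $i$, a sequence $\varepsilon_n\to 0$ inside $S_i$; along it $\lambda_{\varepsilon_n}=(\lambda_i^{\sigma})_{\varepsilon_n}\to\mu_i$, so $\mu_i\in supp(\lambda)$. Conversely, any $q_0\in supp(\lambda)$ arises from some $\varepsilon_n\to 0$; passing to a subsequence on which the finitely many active indices stabilise at a single $i$, we get $\varepsilon_n\in S_i$ and $\lambda_{\varepsilon_n}=(\lambda_i^{\sigma})_{\varepsilon_n}\to q_0$, whence $q_0\in supp(\lambda_i^{\sigma})=\{\mu_i\}$. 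Combining the two directions yields $supp(\lambda)=\{\mu_1,\dots,\mu_n\}=Spec(A_0)$.

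I expect the main obstacle to lie in the bookkeeping of the equality half: making the identification of $e_i$ with the subset $S_i$ precise enough that $\lambda_\varepsilon=(\lambda_i^{\sigma})_\varepsilon$ holds on $S_i$ for a common representative, and justifying the subsequence extraction on which a single idempotent is active. By contrast, the inclusion half is essentially immediate from continuity once the negligibility of $\det(\lambda_\varepsilon I-A_\varepsilon)$ is recorded, and it is worth isolating because it does not require the decomposition of $\lambda$ nor hermiticity of $A$.
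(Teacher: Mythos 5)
Your proof is correct, but the first half takes a genuinely different route from the paper's. For the inclusion $supp(\lambda)\subseteq Spec(A_0)$ the paper argues through eigenvectors: it takes a free vector $v$ of norm one with $Av=\lambda v$, picks an idempotent $f$ with $f\cdot\lambda\approx f\cdot\lambda_0$, and then refines to a sub-idempotent $e$ with $ef=e$ on which $v$ is associated to some classical $v_0\in\mathbb{C}^n$ (necessarily of norm one, hence nonzero), concluding $A_0v_0=\lambda_0v_0$. Your determinant argument replaces all of this by the single observation that $\det(\lambda I-A)=0$ in $\overline{\mathbb{C}}$ (recorded in the preliminaries) together with joint continuity of $(\mu,M)\mapsto\det(\mu I-M)$; this is more elementary, avoids the compactness and subsequence step needed to produce $v_0$, and makes transparent that neither hermiticity nor the decomposition of $\lambda$ is used, which is exactly the point you isolate. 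The paper's route, on the other hand, yields slightly more: it exhibits an actual eigenvector $v_0$ of $A_0$ inside the support of $v$, which is what the subsequent corollary identifying $supp(V_{\lambda_i^{\sigma}})$ with $V_{\mu_i}(A_0)$ builds on. For the equality half the two arguments coincide in substance: $e_i\cdot\lambda=e_i\cdot\lambda_i^{\sigma}\approx e_i\cdot\mu_i$ with $e_i$ non-trivial witnesses $\mu_i\in supp(\lambda)$. The paper states this in one line, while you unwind it at the level of the partition $\{S_i\}$ of the parameter set; your ``converse'' subsequence extraction is sound but redundant, since that direction is already covered by the inclusion proved in the first half.
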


\begin{proof}  Since $\lambda\in Spec(A)$, there exists a free vector $v\in {\overline{\mathbb{C}}}^n$ of norm $\ 1$ such that $Av=\lambda v$. If $\lambda_0\in supp(\lambda)$, then there exists an idempotent $\ f$ such that $f\cdot\lambda \approx  f\cdot\lambda_0$. Since $\ v$ has norm one, there exists an idempotent $\ e$ such that $\ ef=e$ and $\ e\cdot v\approx e\cdot v_0$, with  $v_0\in {{\mathbb{C}}}^n$.  Using this we have, $e\cdot \lambda_0v_0\approx e\lambda v =(Av)e\approx e\cdot A_0v_0$ and hence, $A_0v_0 =\lambda_0 v_0$.

We shall use the notation of the proof of the theorem.  Let $\ supp(A_0)=\{ \mu_1,\dots,\mu_n\}$.  If $\lambda =\sum\limits_i e_i\cdot \lambda_i^{\sigma}$ with $\ e_ie_j =\delta_{ij}e_i $ and $\ e_i\notin\{0,1\}$ then $\ \lambda\cdot e_i=e_i\cdot \lambda_i^{\sigma}\approx  e_i\cdot \mu_i$ and hence the result follows. \end{proof}

 \begin{corollary}
	$supp(Spec(A)):=\bigcup\limits_{\lambda\in Spec(A)}supp(\lambda)=Spec(A_0).$
\end{corollary}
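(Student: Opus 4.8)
The plan is to establish the equality by proving the two inclusions separately, using the first assertion of Corollary~\ref{eigenvalue} for one direction and Theorem~\ref{supp} for the other; no genuinely new computation should be required, since the statement is essentially a repackaging of facts already available.

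For the inclusion $supp(Spec(A)) \subseteq Spec(A_0)$, I would argue over the union pointwise. Fix an arbitrary eigenvalue $\lambda \in Spec(A)$. The first assertion of Corollary~\ref{eigenvalue} already gives $supp(\lambda) \subseteq Spec(A_0)$, so taking the union over all $\lambda \in Spec(A)$ yields $\bigcup_{\lambda\in Spec(A)} supp(\lambda) \subseteq Spec(A_0)$ directly. This direction is thus a formal consequence of the corollary.

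For the reverse inclusion $Spec(A_0) \subseteq supp(Spec(A))$, the idea is to exhibit, for each point $\mu_i \in Spec(A_0)$, an actual eigenvalue of $A$ whose support contains $\mu_i$. Here I would invoke Theorem~\ref{supp}: writing $Spec(A_0) = \{\mu_1, \dots, \mu_n\}$ as an ordered set, the theorem produces a permutation net $\sigma$ for which $\{\lambda_1^{\sigma}, \dots, \lambda_n^{\sigma}\}$ is again a generating set for $Spec(A)$ — so in particular each $\lambda_i^{\sigma}$ lies in $Spec(A)$ — and for which inequality (\ref{eq1}) gives $|\lambda_i^{\sigma} - \mu_i| \approx 0$. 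The latter forces $supp(\lambda_i^{\sigma}) = \{\mu_i\}$, since $\lambda_i^{\sigma}$ is then associated to the classical number $\mu_i$. Consequently $\mu_i \in supp(\lambda_i^{\sigma}) \subseteq supp(Spec(A))$ for every $i$, and as $Spec(A_0) = \{\mu_1, \dots, \mu_n\}$ we conclude $Spec(A_0) \subseteq supp(Spec(A))$. Combining the two inclusions closes the argument.

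I do not expect a serious obstacle, but the one point deserving care is the verification that each $\lambda_i^{\sigma}$ genuinely belongs to $Spec(A)$, rather than being merely an element of $\overline{\mathbb{C}}$ assembled from the generating set via the idempotents $e_{ij}$. This is exactly the content of the observation preceding Theorem~\ref{supp}: when the idempotent matrix $(e_{ij})$ is invertible, the collection $\{\lambda_1^{\sigma}, \dots, \lambda_n^{\sigma}\}$ is again a generating set for $Spec(A)$, and a generating set consists of eigenvalues by definition. Once this membership is secured, both inclusions are purely set-theoretic, and the corollary follows.
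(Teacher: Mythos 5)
Your proof is correct and follows exactly the route the paper intends: the paper states this corollary without proof as an immediate consequence of Corollary~\ref{eigenvalue} (for the inclusion $supp(Spec(A))\subseteq Spec(A_0)$) and Theorem~\ref{supp} (for the reverse inclusion via the eigenvalues $\lambda_i^{\sigma}$ with $supp(\lambda_i^{\sigma})=\{\mu_i\}$). Your added care about why each $\lambda_i^{\sigma}$ actually lies in $Spec(A)$ is a reasonable precaution and is settled by the discussion preceding Theorem~\ref{supp}, just as you note.
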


\begin{remark}
If $\ A$  is hermitian matrix   and  $B$ is an arbitrary normal matrix and we order $\ Spec(A)$ and  $\ Spec(B)$ by the real part of its elements, i.e.,  $\alpha_1\geq\cdots\geq\alpha_n$ and  $Re(\beta_1)\geq\cdots\geq Re(\beta_n)$ then we can give a more precise description of the $\ \lambda_i^{\sigma}$'s. We refere the reador to  \cite[Observation 15.3]{bhatia} (see also \cite[Lemma 3.24]{horman1}.  In fact, in this case we have that, for each $\ i$,  $\lambda_i{\sigma}=\lambda_i$.  
\end{remark}

\begin{proposition} 
\label{probabil}
Let $\ A$ and $\ A_0$ be as in the previous theorem and let $u_0,v_0$ be unitary orthogonal vectors of  $\ \mathbb C^n$. Then  $supp(\nu(A))=\{\nu(A_0)\}$. In particular, $\nu(A)\approx \nu(A_0)$. 
\end{proposition}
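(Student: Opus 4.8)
The plan is to pass to a representative net and show that the net defining $\nu(A)$ converges, as $\varepsilon\to 0$, to the classical real number $\nu(A_0)=|\langle u_0,\exp(iA_0)v_0\rangle|$. First I would fix a representative $(A_\varepsilon)$ of $A$ which, as in the proof of Theorem~\ref{supp}, may be taken hermitian for every $\varepsilon\in I$; then $\exp(iA_\varepsilon)$ is unitary. Because $A\in M_n(\OK_{as})$ with $supp(A)=\{A_0\}$, the net $(A_\varepsilon)$ is bounded near $0$ and has $A_0$ as its unique limit point, hence $A_\varepsilon\to A_0$ in $M_n(\C)$. Since the map $M\mapsto|\langle u_0,\exp(iM)v_0\rangle|$ is continuous (the matrix exponential, the inner product and the modulus are all continuous), the integrand net $g(t):=|\langle u_0,\exp(iA_t)v_0\rangle|$ satisfies $g(t)\to|\langle u_0,\exp(iA_0)v_0\rangle|=\nu(A_0)=:L$ as $t\to 0^+$, and moreover $|g(t)|\leq 1$ by unitarity.

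The technical heart of the argument is the elementary averaging fact, which is the $\varepsilon$-level version of the observation that $\nu(f)\approx f_0$ whenever $f\approx f_0\in\R$: if $g$ is bounded and measurable on $]0,\eta]$ with $g(t)\to L$ as $t\to 0^+$, then $\frac{1}{\varepsilon}\int_0^\varepsilon g(t)\,dt\to L$ as $\varepsilon\to 0^+$. I would prove this directly: given $\delta>0$, choose $\eta>0$ so that $|g(t)-L|<\delta$ for $0<t\leq\eta$; then for $0<\varepsilon\leq\eta$ one has $\big|\frac{1}{\varepsilon}\int_0^\varepsilon g(t)\,dt-L\big|\leq\frac{1}{\varepsilon}\int_0^\varepsilon|g(t)-L|\,dt<\delta$.

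By the definition of the generalized transition probability, a representative of $\nu(A)$ is precisely the net $\varepsilon\mapsto\frac{1}{\varepsilon}\int_0^\varepsilon g(t)\,dt$, so the averaging fact gives that this net converges to $\nu(A_0)$. Convergence of a net as $\varepsilon\to 0$ forces its set of subsequential limits to be the singleton $\{\nu(A_0)\}$, so directly from the definition of the support of a generalized number we conclude $supp(\nu(A))=\{\nu(A_0)\}$, and in particular $\nu(A)\approx\nu(A_0)$. I would also note, as a routine preliminary, that $\nu$ is well defined on $\OR$ (averaging sends negligible nets to negligible nets and moderate to moderate) and that the value $\nu(A_0)$ of the constant matrix $A_0$ is indeed the constant $|\langle u_0,\exp(iA_0)v_0\rangle|$. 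The only genuine content is the interchange of the limit $\varepsilon\to 0$ with the Ces\`aro-type average; I do not expect a real obstacle beyond carefully invoking the continuity of $M\mapsto|\langle u_0,\exp(iM)v_0\rangle|$ and the boundedness coming from unitarity.
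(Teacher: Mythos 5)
Your proof is correct and rests on the same two facts as the paper's own argument: continuity of $M\mapsto|\langle u_0,\exp(iM)v_0\rangle|$, so that $A\approx A_0$ propagates to the integrand, and the Ces\`aro-averaging fact that $\nu(f)=f_0$ whenever $f\approx f_0\in\R$. The only real difference is presentational: the paper routes the argument through the idempotent characterization of the support (taking $t_0\in supp(\nu(A))$ and an idempotent $e$ with $e\cdot\nu(A)\approx e\cdot t_0$), whereas you exploit that $supp(A)=\{A_0\}$ already gives full convergence $A_\varepsilon\to A_0$ and hence a singleton support directly, and you supply an explicit proof of the interchange of the limit with the average, which the paper only asserts ``can be easily seen'' in Section 3.
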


\begin{proof} We first note that if $\ e$ is and idempotent than  $\ exp(iA)\cdot e = exp(i(eA))\approx exp(iA_0)$. Now let $t_0\in supp(\nu(A))$ and let $\ e$ be an idempotent such that $e\cdot \nu(A)\approx t_0$.  Using this,  one gets that $t_0\approx e\cdot  \nu(A)  =exp(i(eA))\approx exp(iA_0)=\nu(A_0)$ and hence $\ t_0=\nu(A_0)$.    
\end{proof}

For each   $1\leq i\leq n$, denote  by $V_{\mu_i}(A_0)$  the eigenspace of $A_0$ corresponding to the eigenvalue $\mu_i\in Spec(A_0)$  and let $\ V_{\lambda_i}(A)$ be the  $\overline{\mathbb K}-$sub-modulo de $\overline{\mathbb K}^n$ spanned by the eigenvectors of  $A$  corresponding to the eigenvalue $\ \lambda_i$.

Suppose that $A$, $\ A_0$, $\sigma$ and $\{\lambda_1^{\sigma},\cdots,\lambda_n^{\sigma}\}$  are as in Theorem~\ref{supp}. Let $\ Spec(A_0)=\{\mu_1,\cdots,\mu_n\}$.  By \cite[Propositions 3.18 and 3.25]{horman1}, $ Spec(A)\subset {\overline{\mathbb{R}}}$ and $\ A$ is unitary equivalent to $\ diag(\lambda_1^{\sigma},\cdots,\lambda_n^{\sigma})$. It also follows easily that   $\ supp(\chi_A)=\{\chi_{A_0}\}$ and the algebraic multiplicity of $\  \lambda_i^{\sigma}$ and $\ \mu_i$ are equal.

\begin{corollary}
	For each  $1\leq i\leq n$, we have  $supp(V_{ \lambda_i^{\sigma}}):= \displaystyle\bigcup_{v\in V_{\lambda_i^{\sigma}}(A)}supp(v)= V_{\mu_i}(A_0)$.
\end{corollary}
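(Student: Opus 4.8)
The plan is to establish the two inclusions $supp(V_{\lambda_i^{\sigma}})\subseteq V_{\mu_i}(A_0)$ and $V_{\mu_i}(A_0)\subseteq supp(V_{\lambda_i^{\sigma}})$ separately. Throughout I fix a representative $(A_{\varepsilon})$ of $A$ consisting of genuine hermitian matrices, so that $A_{\varepsilon}\to A_0$ in $M_n(\C)$, and I use that, by inequality~(\ref{eq1}), $\lambda_i^{\sigma}(\varepsilon)\to\mu_i$ as $\varepsilon\to 0$. Recall also that $supp(v)$, for $v=[(v_{\varepsilon})]\in\OK^n$, is exactly the set of limit points of the net $(v_{\varepsilon})$ along sequences $\varepsilon_n\to 0$.

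For the inclusion $\subseteq$ I would argue as in Corollary~\ref{eigenvalue}. Let $v\in V_{\lambda_i^{\sigma}}(A)$, so $Av=\lambda_i^{\sigma}v$, and let $v_0\in supp(v)$. Choosing a sequence $\varepsilon_n\to 0$ with $v_{\varepsilon_n}\to v_0$, and using that $A_{\varepsilon_n}\to A_0$, $\lambda_i^{\sigma}(\varepsilon_n)\to\mu_i$, and that the net $A_{\varepsilon}v_{\varepsilon}-\lambda_i^{\sigma}(\varepsilon)v_{\varepsilon}$ is negligible (hence tends to $0$), I pass to the limit in $A_{\varepsilon_n}v_{\varepsilon_n}=\lambda_i^{\sigma}(\varepsilon_n)v_{\varepsilon_n}+o(1)$ to obtain $A_0v_0=\mu_i v_0$. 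Thus $v_0\in V_{\mu_i}(A_0)$ and the inclusion follows.

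The reverse inclusion is the substantive part. By Theorem~\ref{horman1} together with \cite[Propositions 3.18 and 3.25]{horman1}, $A$ is unitarily equivalent over $\OK$ to $diag(\lambda_1^{\sigma},\dots,\lambda_n^{\sigma})$; I fix a representative $U=[(U_{\varepsilon})]$ of the diagonalizing unitary in which each $U_{\varepsilon}$ is a genuine unitary diagonalizing $A_{\varepsilon}$ with the $\lambda_j^{\sigma}(\varepsilon)$ on the diagonal, so that column $j$ of $U$ is a unit eigenvector of $A$ for $\lambda_j^{\sigma}$. As the $U_{\varepsilon}$ are unitary, hence uniformly bounded, I extract a sequence $\varepsilon_n\to 0$ along which $U_{\varepsilon_n}\to U_0$ for some unitary $U_0$; passing to the limit gives $U_0^{*}A_0U_0=diag(\mu_1,\dots,\mu_n)$, so the columns $(u_j^0)$ of $U_0$ form an orthonormal eigenbasis of $A_0$. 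Writing $J_i=\{j:\mu_j=\mu_i\}$ and $J_i'=\{j:\lambda_j^{\sigma}=\lambda_i^{\sigma}\}$, the equality of the algebraic multiplicities of $\lambda_i^{\sigma}$ and $\mu_i$ recalled just before the statement gives $|J_i'|=|J_i|$, and since $J_i'\subseteq J_i$ I conclude $J_i'=J_i$; hence the vectors $(u_j^0)_{j\in J_i}$ form an orthonormal basis of $V_{\mu_i}(A_0)$ and are limits of eigenvectors of $A$ for the single eigenvalue $\lambda_i^{\sigma}$.

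Finally, given $w_0\in V_{\mu_i}(A_0)$, I expand $w_0=\sum_{j\in J_i}c_j u_j^0$ with $c_j\in\C$ and set $v=[(v_{\varepsilon})]$ with $v_{\varepsilon}=\sum_{j\in J_i}c_j (U_{\varepsilon})_j$. Each $v_{\varepsilon}$ is a combination of unit eigenvectors of $A_{\varepsilon}$ for $\lambda_i^{\sigma}(\varepsilon)$, so $v\in V_{\lambda_i^{\sigma}}(A)$, and along $\varepsilon_n$ one has $v_{\varepsilon_n}\to w_0$, whence $w_0\in supp(v)\subseteq supp(V_{\lambda_i^{\sigma}})$. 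I expect the main obstacle to be precisely the identification $J_i'=J_i$: without the equal–multiplicity fact several distinct generalized eigenvalues could cluster at the same $\mu_i$, and the eigenvectors for $\lambda_i^{\sigma}$ alone would then accumulate only on a proper subspace of $V_{\mu_i}(A_0)$. A secondary point needing care is that the diagonalizing unitaries need not converge as $\varepsilon\to 0$, but only along a subsequence; this is harmless, since $supp$ is defined through sequential limit points.
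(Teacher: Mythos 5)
Your proposal is correct, and it splits cleanly into a half that coincides with the paper and a half that does not. For the inclusion $supp(V_{\lambda_i^{\sigma}})\subseteq V_{\mu_i}(A_0)$ you argue exactly as the paper does (your sequential limit points are equivalent to the paper's idempotent formulation $e\cdot v\approx e\cdot v_0$, as the preliminaries note), passing to the limit in $A_{\varepsilon}v_{\varepsilon}=\lambda_i^{\sigma}(\varepsilon)v_{\varepsilon}+o(1)$. For the reverse inclusion the paper is much terser: it simply observes that the $\overline{\mathbb{C}}$-free rank of $V_{\lambda_i^{\sigma}}$ equals $\dim V_{\mu_i}(A_0)$ (equal algebraic multiplicities) and that $supp(V_{\lambda_i^{\sigma}})$ is a vector space sitting inside $V_{\mu_i}(A_0)$, and concludes equality by a dimension count — leaving implicit both that this support is closed under addition and that a free module of rank $k$ forces its support to have dimension at least $k$. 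You instead give a constructive argument: fix diagonalizing unitaries $U_{\varepsilon}$, extract a subsequential limit $U_0$ to get an orthonormal eigenbasis of $A_0$, use the equal-multiplicity fact to identify the index sets $J_i'=J_i$, and then exhibit, for each $w_0\in V_{\mu_i}(A_0)$, an explicit $v\in V_{\lambda_i^{\sigma}}(A)$ with $w_0\in supp(v)$. Your route is longer but actually supplies the witnesses that the paper's dimension count only asserts exist, and your closing remark correctly isolates the one place where the argument would break (several distinct $\lambda_j^{\sigma}$ clustering at the same $\mu_i$), which is precisely what the equal-multiplicity hypothesis rules out; the subsequence extraction you flag is indeed harmless for the same reason you give.
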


\begin{proof} Let  $v\in V_{\lambda_i^{\sigma}}(A)$ and let  $v_0\in supp(v)$.  Then $\ e\cdot v\approx e\cdot v_0$, for some $ \ e^2 =e$. With the notation of the  proof of the theorem, it follows that $e\cdot(A_0v_0)\approx e\cdot(Av)=e\cdot (\lambda_i^{\sigma}v)\approx e\cdot \mu_i v_0$. Consequently, $v_0 \in V_{\mu_i}$. Since the algebraic multiplicity of $\  \lambda_i^{\sigma}$ and $\ \mu_i$ are equal, it follows that the ${\overline{\mathbb{C}}}-$free rank of $\ V_{ \lambda_i^{\sigma}}$ is equal to the $\ {\mathbb{R}}-$dimension of $V_{\mu_i}$ and hence, since $supp(V_{ \lambda_i^{\sigma}})$ is an $\ {\mathbb{R}}-$vector space, we have that  $supp(V_{ \lambda_i^{\sigma}}) = V_{\mu_i}(A_0)$. \end{proof}

Note that if  $A\in M_n(\tilde{\mathbb K}_c)$ then we have that  $supp(A)\neq\emptyset$. Moreover, its support is compact.  We are ready to prove our main result for transition probabilities in the finite dimensional case.

\begin{theorem}
\label{proba-geral}

	Let  $A=[(A_\epsilon)_\epsilon]\in M_n(\tilde{\mathbb K}_c)$ be a hermitian matrix. Then $\ supp(\nu(A))=\{\nu(A_0): A_0\in supp(A)\}$.    
\end{theorem}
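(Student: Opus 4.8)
The plan is to prove the two inclusions separately, reducing each to the single--point--support case already settled in Proposition~\ref{probabil}. First I would fix a hermitian representative $(A_\varepsilon)$ of $A$, which is possible exactly as in the proof of Theorem~\ref{supp}, and record that since $A\in M_n(\tilde{\mathbb K}_c)$ the family $\{A_\varepsilon : \varepsilon<\eta\}$ is relatively compact in $M_n(\C)$, so $supp(A)$ is non-empty and compact. Writing $\phi(M):=|\langle u_0,\exp(iM)v_0\rangle|$ for the continuous map $M_n(\C)\to[0,1]$, we have $\nu(A)=\big[\varepsilon\mapsto\frac{1}{\varepsilon}\int_0^\varepsilon\phi(A_t)\,dt\big]$ and $\nu(A_0)=\phi(A_0)$ for any constant hermitian $A_0$. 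The statement thus amounts to identifying the set of cluster values of the running averages $\frac{1}{\varepsilon}\int_0^\varepsilon\phi(A_t)\,dt$ with $\phi(supp(A))=\{\nu(A_0):A_0\in supp(A)\}$.

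For $\supseteq$, I would take $A_0\in supp(A)$ and use the support decomposition of a generalized point recalled in Section~2: there is $e\in\mathcal B(\overline{\mathbb R})$, carried by a parameter set $S$ with $0\in\overline S$, such that $e\cdot A\approx e\cdot A_0$, i.e.\ $A_\varepsilon\to A_0$ as $\varepsilon\to0$ along $S$. The goal would then be to show that the running averages sampled along $S$ approach $\phi(A_0)$, so that Proposition~\ref{probabil} (applied to the net associated to the constant $A_0$) gives $\nu(A_0)\in supp(\nu(A))$.

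For $\subseteq$, I would take $t_0\in supp(\nu(A))$ and an idempotent $f$ with $f\cdot\nu(A)\approx t_0$, and pass to the parameter set $T$ defining $f$. Along $T$ the matrices $A_\varepsilon$ remain in the compact set $\overline{\{A_\varepsilon\}}$, so some sequence $\varepsilon_n\to0$ in $T$ yields $A_{\varepsilon_n}\to A_0\in supp(A)$; refining $f$ to the idempotent carried by $\{\varepsilon_n\}$ and again invoking Proposition~\ref{probabil} would identify $t_0=\phi(A_0)=\nu(A_0)$, as required.

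The hard part, common to both inclusions, is the interaction between the Ces\`aro--type averaging $\frac{1}{\varepsilon}\int_0^\varepsilon$ and multiplication by idempotents, which do \emph{not} commute: the integral blends the values $\phi(A_t)$ over the whole segment $[0,\varepsilon]$, whereas an idempotent only selects a subset of parameters. Elementarily one gets only that each $\frac{1}{\varepsilon}\int_0^\varepsilon\phi(A_t)\,dt$ lies between the infimum and supremum of $\phi(A_t)$ for $t\in[0,\varepsilon]$, so every cluster value of $\nu(A)$ is trapped in the interval spanned by $\phi(supp(A))$; the delicate point is to show that the Ces\`aro mean cannot manufacture genuinely intermediate cluster values, i.e.\ that once $A$ is associated to a constant $A_0$ along a set accumulating at $0$, the averages along that set actually converge to $\phi(A_0)$ rather than to some averaged quantity. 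This is exactly where the reduction to Proposition~\ref{probabil} must be justified, and I would attack it using the uniform continuity of $\phi$, the compactness of $supp(A)$, and the spectral control furnished by the Hoffman--Wielandt inequality (Theorem~\ref{hoffman}) to force $A_t\to A_0$ on entire initial segments rather than merely along sparse subsequences.
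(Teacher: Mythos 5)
Your proposal does not actually close the argument: both inclusions are reduced to Proposition~\ref{probabil}, but the reduction itself --- showing that once $e\cdot A\approx e\cdot A_0$ the averages $\frac{1}{\varepsilon}\int_0^\varepsilon \phi(A_t)\,dt$ seen along the carrier of $e$ really tend to $\phi(A_0)$ --- is exactly the step you defer as ``the delicate point,'' offering only a list of tools you would try. As written this is a plan, not a proof. That said, your diagnosis of where the difficulty sits is precisely correct, and it is instructive to compare it with the paper's own proof, which disposes of the issue by writing $e\cdot\nu(A)=\nu(e\cdot A)$ and $\nu(e\cdot A_0)=e\cdot\nu(A_0)$. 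These identities fail in general: if $e$ is carried by $S\subset I$, then $\nu(e\cdot A)$ has representative $\varepsilon\mapsto\frac{1}{\varepsilon}\int_{S\cap[0,\varepsilon]}\phi(A_t)\,dt$ (since $\phi(0)=|\langle u_0, v_0\rangle|=0$), whereas $e\cdot\nu(A)$ has representative $\varepsilon\mapsto\chi_S(\varepsilon)\cdot\frac{1}{\varepsilon}\int_0^{\varepsilon}\phi(A_t)\,dt$; the idempotent acts on the outer parameter, the integral on the inner one, and they commute only when $S$ has full density at $0$. So the gap you isolate is present, unacknowledged, in the paper's argument as well.

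Moreover the obstruction is not merely technical: the equality of sets fails as stated. The function $g(\varepsilon)=\frac{1}{\varepsilon}\int_0^{\varepsilon}\phi(A_t)\,dt$ is bounded and continuous on $]0,1]$, so $supp(\nu(A))$, being its cluster set at $0$, is always a closed \emph{interval} $[\liminf g,\limsup g]$; but $\{\nu(A_0):A_0\in supp(A)\}=\phi(supp(A))$ need not be an interval. Concretely, let $A_\varepsilon$ alternate between two hermitian matrices $B$ and $C$ with $\phi(B)\neq\phi(C)$ on the dyadic blocks $[2^{-k-1},2^{-k}]$: then $supp(A)=\{B,C\}$, while the running averages at $\varepsilon=2^{-k}$ cluster at $\frac{2}{3}\phi(B)+\frac{1}{3}\phi(C)$ and $\frac{1}{3}\phi(B)+\frac{2}{3}\phi(C)$, values in $supp(\nu(A))$ that are not of the form $\nu(A_0)$ with $A_0\in supp(A)$. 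Your remark that the Ces\`aro mean can ``manufacture genuinely intermediate cluster values'' is exactly what happens; neither your outline nor the paper's proof can be completed without an additional hypothesis (for instance $supp(A)$ a singleton, which is Proposition~\ref{probabil}, or convergence with density one along each idempotent), and the correct unconditional statement is only the inclusion of $supp(\nu(A))$ in the closed convex hull of $\phi(supp(A))$.
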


\begin{proof}  If $A_0\in supp(A)$ then there exists an idempotent $\ e$ such that $e\cdot A\approx e\cdot A_0$. Applying Proposition~\ref{probabil} we have that $e\cdot\nu(A) =\nu(eA)\approx \nu(eA_0)=e\cdot\nu(A_0)$.  Hence $\ \nu(A_0)\in supp(\nu(A))$. Conversely, let $\ a\in supp(\nu(A))$ and $\ e=e^2$ such that $e\cdot a\approx e\cdot\nu(A)=\nu(e\cdot A)$. Choose $\ A_0\in supp(e\cdot A)\subset supp(A)$ and $\ f=f^2$ such that $\ ef=f$ and $f\cdot A_0\approx f\cdot(eA) =f\cdot A$. Then $\ f\cdot a \approx f\cdot\nu(A)=\nu(f\cdot A)\approx \nu(f\cdot A_0)=f\cdot\nu(A_0)$. Hence $\ a=\nu(A_0)$ and the theorem is proved. \end{proof}



 We now look at  $\overline{\mathbb{K}}$-Hilbert modules, $\ {\mathcal{G}}_H$ where  $\ H$  is a infinite dimensional Hilbert space.  We refere the reader to \cite{garetto, garetto1} for terminology and results for these modules. We start formalizing the notion of support in this environment. We shall also need a version of the Hoffman-Wielandt  Theorem for Hilbert spaces for which we refer the reader to \cite{bhatiainf}.

\begin{definition}
	Let  $T=(T_\epsilon): {\mathcal{G}}_H\rightarrow {\mathcal{G}}_H$ be a basic map (see \cite{garetto, garetto1}). Its support, denoted  by   $supp(T)$,is defined as   $supp(T):=\{T_0\in {\mathcal{B}}(H):\exists \ e=e^2$, such that $\ e\cdot T \approx e\cdot T_0 , i.e.,  \|e\cdot (T-T_0)\|_{\infty} \rightarrow 0\}$.
\end{definition}

 \begin{theorem}
 \label{hoffinf}
	Let $\ T$ and  $S$  be normal  Hilbert-Schmidt operators with respective sets of eigenvalues   $\{\alpha_1,\alpha_2,\dots\},\{\beta_1,\beta_2,\dots\}$. Then, for each   $\varepsilon>0$ there exists a permutation  $\pi\in Sym({\mathbb{N}})$  such that 
	\begin{equation}
	\label{hofwi}
		\Bigg[\sum_{i=1}^\infty |\alpha_i-\beta_{\pi(i)}|^2\Bigg]^{1/2}\leq||T-S||+\varepsilon
	\end{equation}       
	
\end{theorem}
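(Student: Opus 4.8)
The plan is to reduce the statement to the finite theory by using that Hilbert–Schmidt operators are compact, so that the spectral theorem produces orthonormal eigenbases, and then to approximate an optimal ``doubly stochastic coupling'' of the two spectra by a genuine permutation; the $\varepsilon$ is precisely the slack one must allow because the infimum over $\mathrm{Sym}(\N)$ need not be attained. First I would fix the data. Since $T$ and $S$ are normal and Hilbert–Schmidt they are compact, so by the spectral theorem for compact normal operators there are \emph{complete} orthonormal bases $\{u_i\}_{i\in\N}$ and $\{v_j\}_{j\in\N}$ of $H$ with $Tu_i=\alpha_i u_i$ and $Sv_j=\beta_j v_j$ (augmenting, if necessary, by orthonormal bases of the kernels). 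I then set $P_{ij}:=|\langle u_i,v_j\rangle|^2$ and $c_{ij}:=|\alpha_i-\beta_j|^2$; Parseval's identity in each basis gives $\sum_j P_{ij}=\sum_i P_{ij}=1$, so $P$ is an (infinite) doubly stochastic matrix.

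The next step is the coupling identity, the exact analogue of the doubly stochastic reformulation behind the finite Hoffman--Wielandt Theorem~\ref{hoffman}. Computing $\|T-S\|^2$ in the basis $\{v_j\}$ and expanding each $\|(T-S)v_j\|^2$ in the basis $\{u_i\}$, and using that for a normal operator $T^*u_i=\overline{\alpha_i}u_i$, one finds $\langle u_i,(T-S)v_j\rangle=(\alpha_i-\beta_j)\langle u_i,v_j\rangle$, whence $\|T-S\|^2=\sum_{i,j}P_{ij}c_{ij}$. All terms are nonnegative, so there is no convergence subtlety. Thus the problem becomes: given a doubly stochastic $P$ and the cost $c_{ij}$ with $\alpha,\beta\in\ell^2$, show $\inf_{\pi}\sum_i c_{i\pi(i)}\le\sum_{i,j}P_{ij}c_{ij}=\|T-S\|^2$.

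The decisive step, and the main obstacle, is this passage from a coupling to an honest permutation, since the infinite Birkhoff--von Neumann theorem fails in the naive form. I would truncate. Given $\delta>0$, order the eigenvalues by decreasing modulus and choose $N$ so large that $\sum_{i>N}|\alpha_i|^2$, $\sum_{j>N}|\beta_j|^2$, and the off-block mass-cost $\sum_{(i,j)\notin[N]\times[N]}P_{ij}c_{ij}$ are each below $\delta$; this is possible because $\alpha,\beta\in\ell^2$ and the double series converges. The block $(P_{ij})_{i,j\le N}$ is substochastic with equal total row- and column-deficiency $N-\sum_{i,j\le N}P_{ij}$, so it can be completed by a nonnegative $E$ carrying the matching deficient marginals to a genuine $N\times N$ doubly stochastic matrix $\tilde Q$; finite Birkhoff--von Neumann then yields $\sigma\in S_N$ with $\sum_{i\le N}c_{i\sigma(i)}\le\sum_{i,j\le N}\tilde Q_{ij}c_{ij}$. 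The point where care is genuinely required is controlling the completion cost $\sum E_{ij}c_{ij}$: here the elementary bound $c_{ij}\le 2|\alpha_i|^2+2|\beta_j|^2$ makes this sum depend only on the deficient marginals, which the tail estimates pin down as $O(\delta)$, giving $\sum_{i\le N}c_{i\sigma(i)}\le\|T-S\|^2+O(\delta)$. Extending $\sigma$ by the identity on $\{i>N\}$ costs only $\sum_{i>N}|\alpha_i-\beta_i|^2\le O(\delta)$, and $\sqrt{a+b}\le\sqrt a+\sqrt b$ converts the additive $O(\delta)$ into the required $\varepsilon$. Everything outside this cost bookkeeping is finite linear algebra plus Parseval, and the conclusion agrees with \cite{bhatiainf}.
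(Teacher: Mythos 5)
Your argument is essentially correct, but note first that the paper does not actually prove Theorem~\ref{hoffinf}: it defers entirely to \cite{bhatiainf}, so there is no in-paper proof to match yours against. What you supply is a genuine self-contained derivation along the standard lines: the coupling identity $\|T-S\|^2=\sum_{i,j}P_{ij}|\alpha_i-\beta_j|^2$ with $P_{ij}=|\langle u_i,v_j\rangle|^2$ doubly stochastic is exactly the mechanism behind the finite Theorem~\ref{hoffman}, and your diagnosis that the entire content of the infinite-dimensional statement --- and the reason the $\varepsilon$ cannot be dropped --- is the failure of Birkhoff--von Neumann for infinite doubly stochastic matrices is the right one. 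The truncate-complete-extract scheme works, and extending $\sigma$ by the identity off the block costs only the $\ell^2$ tails. This buys the reader something the paper's bare citation does not: it makes visible where the $\varepsilon$ enters (non-attainment of the infimum over $Sym(\mathbb{N})$) and reuses machinery already quoted in the paper.

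The one step you assert rather than prove, and the only one that is not pure bookkeeping, is the bound on the completion cost $\sum_{i,j\le N}E_{ij}c_{ij}$. After $c_{ij}\le 2|\alpha_i|^2+2|\beta_j|^2$ this equals $2\sum_{i\le N}|\alpha_i|^2(1-r_i)+2\sum_{j\le N}|\beta_j|^2(1-s_j)$ with $1-r_i=\sum_{j>N}P_{ij}$, and here the deficiencies multiply the possibly \emph{large} eigenvalues $|\alpha_i|^2$, $i\le N$, so the tails of $\alpha$ and $\beta$ alone do not control it; one must route through the off-block mass-cost you imposed: $\sum_{i\le N}|\alpha_i|^2(1-r_i)=\sum_{i\le N,\,j>N}P_{ij}|\alpha_i|^2\le 2\sum_{i\le N,\,j>N}P_{ij}|\alpha_i-\beta_j|^2+2\sum_{j>N}|\beta_j|^2\le 4\delta$, and symmetrically for the $\beta$ term. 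Since you did list the off-block cost among the quantities forced below $\delta$, all the ingredients are on the table; write out this two-line estimate and the proof is complete.
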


The interested reader can find the prove of this theorem in   \cite{bhatiainf}. Note that in Equation~\ref{hofwi} we can change $\ \varepsilon$ by a null function $\ \xi(\varepsilon)$.  In what follows the norm involved is the Hilbert-Schmidt norm which we extend in a natural way to the generalized environment. Let $\pi = (\pi_{\varepsilon})\in [Sym({\mathbb{N}})]^{]0,1]}$ be a permutation net,  $\ T=(T_\epsilon):{\mathcal{G}}_H\rightarrow {\mathcal{G}}_H$ a basic map and $\ \alpha =[\varepsilon\longrightarrow \varepsilon]$ be the standard gauge.  For each $\ \varepsilon$ let $\ (t_n(\varepsilon))_{n\in {\mathbb{N}}}$ be an enumeration of the eigenvalues of $\ T_{\varepsilon}$ and write $\ t(\alpha) := (t_n(\alpha))$ for all these enumerations. Define $\ t^{\pi}(\alpha) : = (t_{\pi_{\varepsilon}(n)}(\alpha))$ and write the latter as  $\  (t_{\pi_{\alpha}(n)}(\alpha))$. Since  every $\ T_{\varepsilon}$ is Hibert-Schmidt, we have that  $\ (t_n(\varepsilon))_{n\in {\mathbb{N}}}\in l_2({\mathbb{R}})$. If we choose the   $\ T_{\varepsilon}$ 's in a ball then we have that $t(\alpha)\in {\mathcal{G}}(l_2({\mathbb{R}}))$. Proceeding in the same way as in the finite dimensional case we obtain that  $ t^{\pi}(\alpha)  = \sum\limits_{j=1}^{\infty} e_{nj}\cdot t_j(\alpha)$, where $\ e_{nj}$  is the charateristic function of the set $\ S_{nj}:=\{\varepsilon \ :\ \pi_{\varepsilon}(n)=j\}$. We can now state the former theorem in the setting of $\ \overline{\mathbb{K}}$-Hilbert modules.

 \begin{corollary}
 \label{hoffinf1}
	 
	 Let  $T=[(T_\epsilon)]:{\mathcal{G}}_H\rightarrow {\mathcal{G}}_H$. Suppose that $\ T$ is a self-adjoint Hilbert-Schmidt operator, i.e. $T=(T_{\varepsilon})$ with each $ \  T_{\varepsilon}$ a selfadjoint  Hilbert-Schmidt operator, $\ T_0\in supp(T)$ and $\ e=e^2$ is such that $\ e\cdot T\approx e\cdot T_0$. If  $\ t_0 = \{t_1,t _2,\dots\}$  is an enumeration of the eigenvalues of $\ T_0$, then there exists a permutation  net $\pi$  such that 
	\begin{equation}
		e\cdot\Bigg[\sum_{n=1}^\infty |t_n-t_{\pi_{\alpha}(n)}(\alpha)|^2\Bigg]^{1/2}\leq||e\cdot(T-T_0)||+e\cdot\alpha\approx 0
	\end{equation}       
	
	\vspace{0.5cm}
	\noindent Moreover $\ t(\alpha) \in l_2({\overline{\mathbb{R}}})$ and  $t^{\pi}_n \longrightarrow t_0$.
\end{corollary}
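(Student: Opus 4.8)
The plan is to reduce the statement to a parameterwise application of Theorem~\ref{hoffinf}, followed by multiplication by the idempotent $e$. First I would record that the hypothesis $e\cdot T\approx e\cdot T_0$, i.e. $\|e\cdot(T-T_0)\|_\infty\to 0$, forces $T_0$ to be a self-adjoint Hilbert-Schmidt operator: on the subset of $I$ cut out by $e$ the net $(T_\varepsilon)$ converges to $T_0$ in the Hilbert-Schmidt norm, and both self-adjointness and the Hilbert-Schmidt property pass to such limits. Consequently $T_0$ and each $T_\varepsilon$ are normal Hilbert-Schmidt operators, so Theorem~\ref{hoffinf} is applicable to the pair $(T_0,T_\varepsilon)$ for every $\varepsilon$.

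Next, for each fixed $\varepsilon$ I would invoke Theorem~\ref{hoffinf}, taking $\{t_n\}$ to be the eigenvalues of $T_0$ and $\{t_n(\varepsilon)\}$ those of $T_\varepsilon$, and choosing the real additive slack at parameter $\varepsilon$ to be $\varepsilon$ itself (the observation following that theorem allows any null function here, but the gauge already suffices). This produces a permutation $\pi_\varepsilon\in Sym(\mathbb{N})$ with $[\sum_n|t_n-t_{\pi_\varepsilon(n)}(\varepsilon)|^2]^{1/2}\leq\|T_\varepsilon-T_0\|+\varepsilon$, and assembling the slacks over all parameters yields exactly the standard gauge $\alpha$. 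Collecting the $\pi_\varepsilon$ gives a permutation net $\pi=(\pi_\varepsilon)$, and I would then build $t^\pi(\alpha)$ precisely as in the paragraph preceding the corollary, writing $t^\pi(\alpha)=\sum_j e_{nj}\cdot t_j(\alpha)$ with $e_{nj}$ the idempotent attached to $S_{nj}=\{\varepsilon:\pi_\varepsilon(n)=j\}$. Choosing the representatives $T_\varepsilon$ in a fixed ball keeps the eigenvalue nets moderate, so $t(\alpha),t^\pi(\alpha)\in l_2(\overline{\mathbb{R}})$ and this reassembly is a well-defined generalized sequence.

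The final step is to multiply the parameterwise inequality by $e(\varepsilon)\in\{0,1\}$. Since $e(\varepsilon)\,\|T_\varepsilon-T_0\|=\|e(\varepsilon)(T_\varepsilon-T_0)\|$, passing to generalized numbers gives $e\cdot[\sum_n|t_n-t_{\pi_\alpha(n)}(\alpha)|^2]^{1/2}\leq\|e\cdot(T-T_0)\|+e\cdot\alpha$, which is the displayed estimate. Its right-hand side is $\approx 0$ because $\|e\cdot(T-T_0)\|\approx 0$ by hypothesis and $e\cdot\alpha\approx 0$ since $\alpha$ is a positive infinitesimal; hence the left-hand side is infinitesimal, which is exactly the asserted $l_2(\overline{\mathbb{R}})$-convergence $t^\pi_n\to t_0$ (along the subset determined by $e$).

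The step I expect to be the main obstacle is the bookkeeping that makes $t^\pi(\alpha)$ a bona fide element of $l_2(\overline{\mathbb{R}})$: one must check that the parameterwise choices $\pi_\varepsilon$ organize into genuine idempotents $e_{nj}$, that the coordinatewise series $\sum_j e_{nj}\cdot t_j(\alpha)$ converges and is moderate, and that the $l_2(\overline{\mathbb{R}})$-norm of the reassembled sequence is the generalized counterpart of the scalar $l_2$-norms appearing at each parameter. All the analytic content is carried by Theorem~\ref{hoffinf}; the remaining work is to verify that this idempotent-indexed reassembly of permuted eigenvalues is consistent, just as in the finite-dimensional Theorem~\ref{supp}.
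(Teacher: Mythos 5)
Your proposal is correct and follows exactly the route the paper intends: the paper leaves the corollary without an explicit proof, but the paragraph preceding it (parameterwise application of Theorem~\ref{hoffinf} with the slack taken to be $\varepsilon$, assembly of the $\pi_\varepsilon$ into a permutation net, and the idempotent decomposition $t^{\pi}(\alpha)=\sum_j e_{nj}\cdot t_j(\alpha)$) is precisely the construction you carry out, followed by multiplication by $e$. Your added remarks --- that the Hilbert--Schmidt norm interpretation of $e\cdot T\approx e\cdot T_0$ is what makes $T_0$ itself Hilbert--Schmidt, and that the representatives must be chosen in a ball for moderateness of the eigenvalue nets --- are exactly the details the paper glosses over.
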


Denote  by $\ {\mathcal{B}}({\mathcal{G}}_H)$  the elements $T=(T_\epsilon): {\mathcal{G}}_H\rightarrow {\mathcal{G}}_H$  for  which there exists a representative  $(T_\epsilon)_\epsilon$ of  $\ T$ and a compact subset   $K\subset {\mathcal{B}}(H)$ such that  $T_\epsilon\in K$  for $\  \epsilon<\eta$, for some   $\eta\in I$. We call such a $\ T$ a {\it compactly supported element}.  Note that if  $\ T\in {\mathcal{B}}({\mathcal{G}}_H)$ then $\ supp(T)\neq\emptyset$.  Note that \cite[Lemma 10.1]{V3} applies in this case.

\begin{theorem}
\label{bounded}
Let  $T= [(T_\epsilon)]\in \ {\mathcal{B}}({\mathcal{G}}_H)$   be a  selfadjoint   operator   such that each $T_\epsilon$ is selfadjoint. Then  $supp(\nu(T))=\{\nu(T_0):T_0\in supp(T)\}$.  

\end{theorem}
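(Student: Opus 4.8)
The plan is to transcribe the finite-dimensional argument of Theorem~\ref{proba-geral} into the Hilbert-module setting, letting the compact-support hypothesis $T\in\mathcal{B}(\mathcal{G}_H)$ and the norm-continuity of the operator exponential play the roles that finite-dimensionality played for free before. Throughout, for the fixed orthonormal pair $u,v\in H$ I write $g(B)=|\langle u, \exp(iB)v\rangle|$ for $B\in\mathcal{B}(H)$, so that $\nu(T)=[\varepsilon\to g(T_\varepsilon)]$ and, for a fixed selfadjoint $T_0$, $\nu(T_0)=g(T_0)\in[0,1]$. Two facts carry the whole proof. The first is the idempotent-compatibility $e\cdot\nu(T)=\nu(eT)$ for every $e=e^2\in\mathcal{B}(\overline{\mathbb{K}})$; this holds for exactly the reason it did in the finite-dimensional case, since an idempotent acts as a characteristic net $\chi_S$, while $\exp(i\cdot 0)=\mathrm{Id}$ and $g(\mathrm{Id})=|\langle u, v\rangle|=0$ by orthogonality of $u,v$. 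The second is a continuity statement: if $T\approx T_0$ for a fixed selfadjoint $T_0$, then $\nu(T)\approx\nu(T_0)$.

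First I would establish the continuity statement, which is the genuinely infinite-dimensional point. On the compact set $K\subset\mathcal{B}(H)$ containing a representative of $T$, the map $B\mapsto\exp(iB)$ is norm-continuous, since its power series converges uniformly on the bounded set $K$; hence $g$ is norm-continuous on $K$. Thus $T\approx T_0$, i.e. $\|T_\varepsilon-T_0\|\to 0$ in operator norm, gives $g(T_\varepsilon)\to g(T_0)$, that is $\nu(T)\approx\nu(T_0)$. Because each $T_\varepsilon$ is selfadjoint, $\exp(iT_\varepsilon)$ is unitary and $0\leq g(T_\varepsilon)\leq 1$, so $\nu(T)$ is moderate and $supp(\nu(T))\subset[0,1]$ carries no infinities; likewise the limit points $T_0\in supp(T)$ are selfadjoint, so each $\nu(T_0)$ is a genuine transition-probability value. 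Combined with idempotent-compatibility this yields the Hilbert-space analogue of Proposition~\ref{probabil}: if $supp(T)=\{T_0\}$ then $supp(\nu(T))=\{\nu(T_0)\}$.

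With these in hand the two inclusions go exactly as in Theorem~\ref{proba-geral}. For $\supseteq$: given $T_0\in supp(T)$, pick $e$ with $e\cdot T\approx e\cdot T_0$; then $e\cdot\nu(T)=\nu(eT)\approx\nu(eT_0)=e\cdot\nu(T_0)$, so $\nu(T_0)\in supp(\nu(T))$. For $\subseteq$: given $a\in supp(\nu(T))$, choose $e$ with $e\cdot a\approx e\cdot\nu(T)=\nu(eT)$; since $T\in\mathcal{B}(\mathcal{G}_H)$ is compactly supported, $eT$ is again compactly supported and $supp(eT)\neq\emptyset$, so I would pick $T_0\in supp(eT)\subseteq supp(T)$ together with a refinement $f$ satisfying $ef=f$ and $f\cdot T_0\approx f\cdot T$; then $f\cdot a\approx f\cdot\nu(T)=\nu(fT)\approx\nu(fT_0)=f\cdot\nu(T_0)$, and since $a$ and $\nu(T_0)$ are real constants associated through a nonzero idempotent, $a=\nu(T_0)$.

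The main obstacle is the compactness bookkeeping that was invisible in finite dimensions: one must work exclusively in the operator-norm topology so that, first, $K$ is norm-compact and every sequence $\varepsilon_n\to 0$ inside a subset realizing a nonzero idempotent has a norm-limit point in $K$ — this is precisely what forces $supp(T)$ and $supp(eT)$ to be nonempty, and is the exact reason the hypothesis $T\in\mathcal{B}(\mathcal{G}_H)$ cannot be dropped — and, second, the exponential is norm-continuous there, which is what transfers norm-convergence of operators to convergence of transition-probability values. The one step I would treat carefully is extracting from $T_0\in supp(eT)$ a genuine sub-idempotent $f$ with $f\cdot T\approx f\cdot T_0$, since it amounts to selecting a subset $S'$ along which $T_\varepsilon\to T_0$ in norm; norm-compactness of $K$ guarantees such an $S'$. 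I expect the Hoffman--Wielandt material of Corollary~\ref{hoffinf1} to be unnecessary for this support identity — it is the eigenvalue-level refinement — just as the finite-dimensional Theorem~\ref{proba-geral} did not invoke the spectral decomposition of Theorem~\ref{supp}.
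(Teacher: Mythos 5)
Your proposal is correct and follows essentially the same route as the paper's proof: the inclusion $\supseteq$ via an idempotent $e$ with $e\cdot T\approx e\cdot T_0$ together with norm-continuity of the exponential, and the inclusion $\subseteq$ via a refining idempotent $f\leq e$ with $f\cdot T\approx f\cdot T_0$, whose existence is guaranteed by the compact-support hypothesis. You merely make explicit two points the paper leaves tacit, namely that $e\cdot\nu(T)=\nu(eT)$ because $g(0)=|\langle u,v\rangle|=0$, and that Corollary~\ref{hoffinf1} is not needed here.
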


\begin{proof} Let  $T_0\in supp(T)$ and $\ e =e^2$ such that $\ e\cdot T\approx e\cdot T_0$.   Since all operators involved are bounded  and the exponencial map is continuous, we have that   $\ e\cdot\nu(T)= e\cdot\nu(e\cdot T)\approx e\cdot\nu(e\cdot T_0)=e\cdot \nu(T_0)$.  Conversely, let  $\ \omega\in supp(\nu(T))$  and $ \ e =e^2$ such that $\ e\cdot\omega\approx e\cdot \nu(T)$. Choose $\ f$ such that $\ f\cdot e =f$ and $\ f\cdot T \approx T_0$, for  some $\ T_0$. Then we have that $\ f\cdot\omega\approx f \cdot \nu(T)=\nu(f\cdot T)\approx \nu(f\cdot T_0) =f\cdot\nu(T_0)$. It follows that $\ \omega =\nu(T_0)$, completing the  proof. \end{proof}


\section{Examples}

In this section we  considere examples some of which   were already considered in \cite{unpublish} without however explicit calculations.  These examples are not covered by the results of the previous sections. Before we continue we make the following definition. An element $\lambda \in {\overline{\mathbb{R}}}$ which satisfies $supp(\lambda)=\emptyset$ we shall call a {\it pure infinity}. Note that this is equivalent to $\ \lim\limits_{\varepsilon \longrightarrow \infty}\lambda (\varepsilon)=\infty$.

 In a 2-dimensional Hilbert space let   $H(t)=\left(\begin{array}{ll}
      \frac{g}{\epsilon}&0 \\ 0 & \frac{g}{\epsilon^2}
            \end{array} \right)$ $=\left(\begin{array}{ll}
      g\alpha^{-1}&0 \\ 0 & g\alpha^{-2}
            \end{array} \right)$,  where $\ g\in{\mathbb{R}}$. The solution of the ODE  $$S^{\prime}(t)=iH(t) S(t)$$
            
            \noindent   is

            $$\Bigg[\varepsilon \longrightarrow S_{\varepsilon}(t)= \left(\begin{array}{ll}
      \exp\bigg({i\frac{g}{\epsilon}(t-t_0)}\bigg)& \ \ \ \ \ \ \ 0 \\ 0&\exp\bigg({i\frac{g}{\epsilon^2}(t-t_0)\bigg)}
            \end{array} \right)\Bigg]$$

            \noindent Hence, for orthogonal unitary vectors $\ u=(u_1, u_2)$ and $\ v=(v_1, v_2)$ we have   that 
            
$$\langle u\mid S_\epsilon(t)v\rangle=u_1 v_1\exp\bigg({i\frac{g}{\epsilon}(t-t_0)}\bigg)+u_2v_2\exp\bigg({i\frac{g}{\epsilon^2}(t-t_0)}\bigg)$$

\noindent In \cite{unpublish} is it asserted that the support of $\langle u,S_\epsilon(t)v\rangle $ is not unitary, i.e., $ \langle u,S_\epsilon(t)v\rangle$ is not associated to any real number. But clearly  $[\varepsilon \longrightarrow \langle u,S_\epsilon(t)v\rangle] $ is the sum of the composition of   a.p. functions and pure infinities. Hence it does have a mean value.  Hence it is most likely that $|\langle u,S_\epsilon(t)v\rangle |$ also does have a mean value. We proceed to compute this value. If write $\ a=u_1v_1,  b=u_2v_2$  then $\ a+b=0$ and we have that 

$$  | \langle u,S_\epsilon(t)v\rangle| =2|a|\cdot\bigg |sin\bigg(\frac{1-\varepsilon}{\varepsilon^2}g(t-t_0)\bigg)\bigg|$$

\noindent It follows that $\ M( | \langle u,S_\epsilon(t)v\rangle |) =2|a|\cdot M\Bigg(\bigg |sin\bigg(\frac{1-\varepsilon}{2\varepsilon^2}(t-t_0)\bigg)\bigg|\Bigg)=\frac{4}{\pi}\cdot |a|$, where we used Theorem~\ref{meanval1} in the last equality.  We refer the reader to  \cite[Section 8]{Colomarxiv} (see also \cite[Theorem 5]{unpublish}).

We look at another  example from \cite{ unpublish, transition},  considering  the  $2\times 2$ symmetric matrix\\ $A = \Bigg[\varepsilon\longrightarrow\left(\begin{array}{ll}
      \frac{a}{\epsilon^{n_1}+\epsilon^{2n_1}}& \frac{c}{\epsilon-2\epsilon^2}\\ \frac{c}{\epsilon-2\epsilon^2}&\ \frac{b}{\epsilon^{n_2}}
            \end{array} \right)\Bigg]$ $= \left(\begin{array}{ll} \frac{a}{\alpha^{n_1}+\alpha^{2n_1}}& \frac{c}{\alpha-2\alpha^2}\\ \frac{c}{\alpha-2\alpha^2}&\ \frac{b}{\alpha^{n_2}}
            \end{array} \right)$.  If  $\ u=(u_1, u_2), \ v= (v_1, v_2) $ and one computes

$$ \frac{1}{N}\sum_{j=1}^N|\langle (u_1, u_2)\mid  \exp(-iH)(v_1,  v_2)\rangle|$$

\noindent by choosing $\epsilon$ at random between  0 and some very small value $\eta$; $<\dots>$ is the standard Hermitian scalar product in $\mathbb{C}^n$   and uses the matlab routine \textit{expm} to calculate the exponential of a matrix its seems that a  transition probability should exist, for $\ a=1, b=1.3,  c=0.4, n_1=n_2=1$ and is around $0.9251$. This simulation is reported in \cite{unpublish, transition}. Note however that this simulation only indicates that the support of the generalized transition probability is not empty. However, since values of $\ \varepsilon$ were randomly chosen, it does suggest that the support has a unique element and hence the existence of the  transition probability. 

 We first consider $\  u=\bigg(\frac{\sqrt{2}}{2}, \frac{\sqrt{2}}{2}\bigg), v= \bigg(\frac{\sqrt{2}}{2}, -\frac{\sqrt{2}}{2}\bigg)$,  $\ A =\Bigg[\varepsilon\longrightarrow \left(\begin{array}{ll}
      \frac{1}{\epsilon+\epsilon^{2}}& \frac{1}{\epsilon-2\epsilon^2}\\ \frac{1}{\epsilon-2\epsilon^2}&\ \ \ \frac{1}{\epsilon}
            \end{array} \right)\Bigg]$ and prove the existence of the transition probability for any orthogonal pair and give its actual value.    We start calculating   the eigenvalues of $\ A$: 
 
 $$\lambda(\varepsilon) =  \frac{1}{2}\Bigg[ \frac{2\varepsilon+\varepsilon^2}{\varepsilon^2+\varepsilon^3} \pm \sqrt{\bigg(\frac{2\varepsilon+\varepsilon^2}{\varepsilon^2+\varepsilon^3}\bigg)^2 - 4\bigg[\frac{1}{\varepsilon^2+\varepsilon^3} -\frac{1}{(\varepsilon-2\varepsilon^2)^2}  \bigg]}\Bigg]$$
 
 \noindent From this it follows that

 $$\alpha \lambda  = \Bigg[\varepsilon\longrightarrow  \frac{1}{2}\Bigg[ \frac{2+\varepsilon }{1+\varepsilon } \pm \sqrt{\bigg(\frac{2+\varepsilon }{1+\varepsilon }\bigg)^2 - 4\bigg[\frac{1}{1+\varepsilon } -\frac{1}{(1-2\varepsilon)^2}  \bigg]}\Bigg]\Bigg]$$
 
 \noindent and hence $\alpha\cdot \lambda \approx \frac{1}{2}[2\pm 2]$.  We denote by $\ \lambda_1$ the eigenvalue with the $\ +$ sign chosen and by $\ \lambda_2$ the one with the $\ -$ sign  chosen.  From this and the use of a graphic calculator one proves that $\|\lambda_1\|=e$ and $\|\lambda_2\|<e^{-18}<1$. Hence $\ \lambda_1$ is a pure infinity and $\  \lambda_2$ is an infinitesimal.  We also have that 
 
 $$\alpha(\lambda_1-\lambda_2) = \sqrt{\bigg(\frac{2 +\varepsilon}{1+\varepsilon}\bigg)^2 - 4\bigg[\frac{1}{1+\varepsilon} -\frac{1}{(1-2\varepsilon)^2}  \bigg]} \approx \sqrt{2}$$
 
  \noindent proving that $\ \lambda_1-\lambda_2$ is a pure infinity.  A basis of eigenvectors  of  $\ A$ is  $\{(1,k), (-k, 1)\}$, where 
 
 $$\ k(\varepsilon)=   (\varepsilon-2\varepsilon^2)(\lambda_1-(\varepsilon+\varepsilon^2)^{-1})=$$  $$ (\varepsilon-2\varepsilon^2) \frac{1}{2}\Bigg[ \frac{1}{\varepsilon}  +\sqrt{\bigg(\frac{2\varepsilon+\varepsilon^2}{\varepsilon^2+\varepsilon^3}\bigg)^2 - 4\bigg[\frac{1}{\varepsilon^2+\varepsilon^3} -\frac{1}{(\varepsilon-2\varepsilon^2)^2}  \bigg]}\Bigg]=$$   $$ \frac{1}{2}\Bigg[1+ \sqrt{\bigg(\frac{2+\varepsilon }{1+\varepsilon }\bigg)^2 - 4\bigg[\frac{1}{1+\varepsilon } -\frac{1}{(1-2\varepsilon)^2}  \bigg]}\Bigg] \approx 1.5:=k_0 $$
 
 \noindent Hence $\  M =\frac{1}{\sqrt{1+k^2}} \left(\begin{array}{ll}
     1& -k\\ k&1    \end{array} \right)$ conjugates  $\ A$ into diagonal form $\ D=  \left(\begin{array}{ll}
     e^{i\lambda_1}& 0\\ 0&e^{i\lambda_2}    \end{array} \right)$ and $ \ M\approx M_0 =  \frac{1}{\sqrt{1+k_0^2}} \left(\begin{array}{ll}
     1& -k_0\\ k_0&1    \end{array} \right)$. From this it follows that 
     
     $$\langle u\mid exp(iA)v\rangle =\langle Mu\mid DMv\rangle=: g_1g_3e^{i\lambda_1}+g_2g_4e^{i\lambda_2}$$

     \noindent where $\ Mu=(g_1,g_2), Mv=(g_3,g_4)$,  $g_i\approx y_i\in \mathbb{R}, \ i=1,4$, $g_1g_3+g_2g_4=0$ and $\ \{e^{i\lambda_1}, e^{i\lambda_2}\}$ consists of pure vibrations.   We already saw that   $\ \lambda_1$ is a pure infinity.       As in the previous example, one gets that 
     
     $$\ |\langle u\mid exp(iA)v\rangle| = 2|g_1g_3|\cdot \bigg|sin\bigg(\frac{\lambda_1-\lambda_2}{2}\bigg) \bigg|$$
     
     \noindent and hence  the transition probability is $$\frac{4 }{\pi}\cdot |g_1(0)g_3(0)|=\frac{4 }{\pi}\cdot \bigg|\frac{1}{\sqrt{10}}\bigg(\frac{\sqrt{2}}{2}-k_0\frac{\sqrt{2}}{2}\bigg)\bigg(\frac{\sqrt{2}}{2}+k_0\frac{\sqrt{2}}{2}\bigg)\bigg|=\frac{2(k_0^2-1)}{(k_0^2+1)\pi}  \approx 0.244853758603$$

Clearly   these two examples show how to solve the generic case of a $\ 2\times 2$ generalized  matrix. We now  compute the actual value reported in   \cite{unpublish, transition}.  In this case,  $A =\Bigg[\varepsilon\longrightarrow \left(\begin{array}{ll}\frac{1}{\epsilon+\epsilon^{2}}& \frac{0.4}{\epsilon-2\epsilon^2}\\ \frac{0.4}{\epsilon-2\epsilon^2}&\ \ \frac{1.3}{\epsilon^{2}}
            \end{array} \right)\Bigg]$ with eigenvalues

            $$\lambda(\varepsilon) =  \frac{1}{2}\Bigg[ \frac{1.3+2.3\varepsilon}{\varepsilon^2+\varepsilon^3} \pm \sqrt{\bigg(\frac{1.3+2.3\varepsilon}{\varepsilon^2+\varepsilon^3}\bigg)^2 - 4\bigg[\frac{1.3}{\varepsilon^3+\varepsilon^4} -\frac{0.16}{(\varepsilon-2\varepsilon^2)^2}  \bigg]}\Bigg]$$

 \noindent From this it follows that

 $$\alpha^{2.6}\cdot \lambda(\varepsilon) =  \frac{1}{2}\Bigg[ \varepsilon^{.06}\cdot\frac{1.3+2.3\varepsilon }{1+\varepsilon } \pm \sqrt{\varepsilon^{1.2}\bigg(\frac{1.3+2.3\varepsilon }{1+\varepsilon }\bigg)^2 - 4\varepsilon^{0.2}\bigg[\frac{1.3(1-2\varepsilon)2-0.16(\varepsilon+\varepsilon^2)}{(1+\varepsilon)(1-2\varepsilon)}\bigg]}  \Bigg]$$
 
 \noindent and hence $\alpha^{2.6}\cdot \lambda \approx 0$.  We denote by $\ \lambda_1$ the eigenvalue with the $\ +$ sign chosen and by $\ \lambda_2$ the one with the $\ -$ sign  chosen.  Then  $\alpha^{2.5}\cdot \lambda_1 \not\approx 0$. And hence $\  \|\lambda_1\|\geq e^{2.5}$ and $ \ \lambda_1$ is a pure infinity. For $\ \lambda_2$, using a graphic calculator,  we find that $\ \alpha^{-18.2}\cdot \lambda_2\approx 0$ but $\ \alpha^{-18.2}\cdot \lambda_2\not\approx 0$. It follows that $\alpha_2$ is an infinitesimal of norm less the $\ e^{-18.2}$.

 $$\alpha^2( \lambda_1-\lambda_2)  = \Bigg[\varepsilon \longrightarrow  \sqrt{\bigg(\frac{1.3+2.3\varepsilon}{1+\varepsilon}\bigg)^2 - 4\bigg[\frac{1.3\varepsilon}{1+\varepsilon} -\frac{0.16\varepsilon^2}{(1-2\varepsilon)^2}  \bigg]\ } \Bigg]  \approx 1$$

\noindent proving that $\ \lambda_1-\lambda_2$ is a pure infinity.  A basis of eigenvectors  of  $\ A$ is  $\{(1,k), (-k, 1)\}$, where 
 
 $$\ k(\varepsilon)=   \frac{5}{2}(\varepsilon-2\varepsilon^2)(\lambda_1-(\varepsilon+\varepsilon^2)^{-1})=$$
 $$  \frac{5}{2}(\varepsilon-2\varepsilon^2)\cdot \frac{1}{2}\Bigg[ \frac{1.3}{\varepsilon^2}+ \sqrt{\bigg(\frac{1.3+2.3\varepsilon}{\varepsilon^2+\varepsilon^3}\bigg)^2 - 4\bigg[\frac{1.3}{\varepsilon^3+\varepsilon^4} -\frac{0.16}{(\varepsilon-2\varepsilon)^2}  \bigg]}\Bigg]$$

 $$  \frac{5}{4}(1-2\varepsilon)     \Bigg[ \frac{1.3}{\varepsilon}+ \sqrt{\bigg(\frac{1.3+2.3\varepsilon}{\varepsilon+\varepsilon^2}\bigg)^2 - 4\bigg[\frac{1.3}{\varepsilon+\varepsilon^2} -\frac{0.16}{(1-2\varepsilon)^2}  \bigg]}\Bigg]$$

 \noindent It follows that $\alpha k\approx 2.6$ and thus $\ k$   is a pure infinity.  Consequently,  $\  M =\frac{1}{\sqrt{1+k^2}} \left(\begin{array}{ll}
     1& -k\\ k&1    \end{array} \right)$ conjugates  $\ A$ into diagonal form $\ D=  \left(\begin{array}{ll}
     e^{i\lambda_1}& 0\\ 0&e^{i\lambda_2}    \end{array} \right)$ and $ \ M\approx M_0 =  \left(\begin{array}{ll}
     0& -1\\ 1& 0    \end{array} \right)$. From this it follows that 
     
     $$\langle u\mid exp(iA)v\rangle =\langle Mu\mid DMv\rangle=: g_1g_3e^{i\lambda_1}+g_2g_4e^{i\lambda_2}$$

     \noindent where $\ Mu=(g_1,g_2), Mv=(g_3,g_4)$,  $g_i\approx y_i\in \mathbb{R}, \ i=1,4$, $g_1g_3+g_2g_4=0$ and $\ \{e^{i\lambda_1}, e^{i\lambda_2}\}$ consists of pure vibrations. If follows from Corollary~\ref{gapf} that $\langle u\mid exp(iA)v\rangle$ has a mean value. Once gain   $\ \lambda_1$ is a pure infinity and, consequently, 
     
     $$\ |\langle u\mid exp(iA)v\rangle| = 2|g_1g_3|\cdot \bigg|sin\bigg(\frac{\lambda_1-\lambda_2}{2}\bigg)\bigg| $$
     
     \noindent   concluding  that the transition probability is $$\frac{4 }{\pi}\cdot |g_1(0)g_3(0)|=\frac{4 }{\pi} \bigg|\bigg(-\frac{\sqrt{2}}{2}\bigg)\bigg( \frac{\sqrt{2}}{2}\bigg)\bigg|=\frac{2}{\pi}\approx    0.636619772368$$

     This value is different from the one found in \cite{unpublish, transition}. From our calculations, in the dimension $\ 2$ case, it follows that if the support of the generalized transition probability has more dan one element  then the same will be  true for the element $\ |g_1\cdot g_3|$.

         Consider    $A = \Bigg[\varepsilon\longrightarrow \left(\begin{array}{ll}
      \frac{1}{\sqrt{\epsilon}+\epsilon^{2 }}& \frac{2}{\epsilon-2\epsilon^2}\\ \frac{2}{\epsilon-2\epsilon^2}&\ \ \frac{1}{\epsilon}
            \end{array} \right)\Bigg]$ and let us  calculate the mean value $|\langle u\mid \exp(-iA)v\rangle|$.
     
    The eigenvalues of $\ -A$ are
 
 $$\lambda(\varepsilon) =  \frac{1}{2}\Bigg[ \frac{1+\sqrt{\varepsilon}+\varepsilon\sqrt{\varepsilon}}{\varepsilon(1+\varepsilon\sqrt{\varepsilon})} \pm \sqrt{\bigg( \frac{1+\sqrt{\varepsilon}+\varepsilon\sqrt{\varepsilon}}{\varepsilon(1+\varepsilon\sqrt{\varepsilon})}\bigg)^2 - 4\bigg[\frac{\sqrt{\varepsilon}(1-2\varepsilon)^2-\varepsilon\sqrt{\varepsilon}-1}{\varepsilon^2(1+\varepsilon\sqrt{\varepsilon})(1-2\varepsilon)^2}\bigg]}\  \Bigg]$$
 
 \noindent From we get that

  $$\alpha\lambda = \Bigg[\varepsilon\longrightarrow \frac{1}{2}\Bigg[ \frac{1+\sqrt{\varepsilon}+\varepsilon\sqrt{\varepsilon}}{1+\varepsilon\sqrt{\varepsilon}} \pm \sqrt{\bigg( \frac{1+\sqrt{\varepsilon}+\varepsilon\sqrt{\varepsilon}}{1+\varepsilon\sqrt{\varepsilon}}\bigg)^2 - 4\bigg[\frac{\sqrt{\varepsilon}(1-2\varepsilon)^2-\varepsilon\sqrt{\varepsilon}-1}{(1+\varepsilon\sqrt{\varepsilon})(1-2\varepsilon)^2}\bigg]} \ \Bigg]\Bigg]$$
 
 \noindent and hence $\alpha\cdot \lambda \approx \frac{1}{2}[1\pm \sqrt{5}]$  We denote by $\ \lambda_1$ the eigenvalue with the $\ +$ sign chosen and by $\ \lambda_2$ the one with the $\ -$ sign  chosen.  From this it follows  that $\ \lambda_1$ and $\ \lambda_2$  are   pure infinities. Also 
 
 $$\  \alpha(\lambda_1-\lambda_2)\approx \sqrt{5}$$

 \noindent  and hence $\lambda_1-\lambda_2$  is an infinity whose norm is $\ \geq e^{-1}$.   A basis of eigenvectors  of  $\ A$ is  $\{(1,k), (k, -1)\}$, where 
 
 $$\ k(\varepsilon)=   \frac{(\varepsilon-2\varepsilon^2)}{2}\bigg(\lambda_1- \frac{1}{\sqrt{\epsilon}+\epsilon^{2 }}\bigg)=$$ $$ \frac{1-2\varepsilon}{2}\Bigg[ 1 + \sqrt{\bigg( \frac{1+\sqrt{\varepsilon}+\varepsilon\sqrt{\varepsilon}}{1+\varepsilon\sqrt{\varepsilon}}\bigg)^2 - 4\bigg[\frac{\sqrt{\varepsilon}(1-2\varepsilon)^2-\varepsilon\sqrt{\varepsilon}-1}{(1+\varepsilon\sqrt{\varepsilon})(1-2\varepsilon)^2}\bigg]} \ \Bigg] \approx \frac{1+\sqrt{5}}{2}=:k_0$$

 \noindent Hence $\  M =\frac{1}{\sqrt{1+k^2}} \left(\begin{array}{ll}
     1& -k\\ k&1    \end{array} \right)$ conjugates  $\ A$ into diagonal form $\ D=  \left(\begin{array}{ll}
     e^{i\lambda_1}& 0\\ 0&e^{i\lambda_2}    \end{array} \right)$ and $ \ M\approx M_0 =  \frac{1}{\sqrt{1+k^2_0}} \left(\begin{array}{ll}
     1& -k_0\\ k_0&1    \end{array} \right)$. From this it follows that 
     
     $$\langle u\mid exp(iA)v\rangle =\langle Mu\mid DMv\rangle=: g_1g_3e^{i\lambda_1}+g_2g_4e^{i\lambda_2}$$

     \noindent where $\ Mu=(g_1,g_2), Mv=(g_3,g_4)$,  $g_i\approx y_i\in \mathbb{R}, \ i=1,4$, $g_1g_3+g_2g_4=0$ and $\ \{e^{i\lambda_1}, e^{i\lambda_2}\}$ consists of pure vibrations. If follows from Corollary~\ref{gapf} that $\langle u\mid exp(iA)v\rangle$ has a mean value.  Note  that $\ \lambda_1-\lambda_2$ is a pure infinity.  As in the previous example, one gets that 
     
     $$\ |\langle u\mid exp(iA)v\rangle| = 2|g_1g_3|\cdot \bigg|sin\bigg(\frac{\lambda_1-\lambda_2}{2}\bigg) \bigg|$$
     
     \noindent As before, we conclude that the transition probability is $$\frac{4 }{\pi}\cdot |g_1(0)g_3(0)|=\frac{4}{\pi}\cdot \bigg|\frac{1}{1+k_0^2}\bigg(\frac{\sqrt{2}}{2}+k_0\frac{\sqrt{2}}{2}\bigg)\bigg(\frac{\sqrt{2}}{2}-k_0\frac{\sqrt{2}}{2}\bigg)\bigg|=\frac{2(k_0^2-1)}{\pi(k_0^2+1)} \approx 0.284705017367$$

\noindent The reported value in \cite{unpublish, transition} is $\ 0.1539$.


We now consider the general finite dimensional case. Proceeding in the same way for an $\ n\times n$ symmetric  matrix $\ A$, with  $\ Spec(A)=\{\lambda_1,\cdots,\lambda_n\}$,  one gets that 

\begin{equation}
\label{phasor}  |\langle u\mid exp(iA)v\rangle| =\sqrt{\|a\|^2 +2\sum\limits_{i< j}a_ia_jcos(\lambda_i-\lambda_j)} $$
$$  |\langle u\mid exp(iA)v\rangle| =  2\sqrt{-\sum\limits_{i < j}a_ia_jsin^2\bigg(\frac{\lambda_i-\lambda_j}{2}\bigg)} $$
$$  |\langle u\mid exp(iA)v\rangle|\leq  2 \bigg(\sum\limits_{i < j}\sqrt{|a_i|\cdot |a_j|\ }\cdot\bigg| sin\bigg(\frac{\lambda_i-\lambda_j}{2}\bigg)\bigg|\bigg)$$
$$ a=(a_1,\cdots, a_n)\in ({\overline{\mathbb{C}}_{as}})^n\ \mbox{and}\ \sum\limits_i a_i=0, \|a\|^2 <1\end{equation}

\noindent If the set $\ \{\lambda_i-\lambda_j \ :\  \lambda_i \neq \lambda_j  \in Spec(A)\}$ consists of pure infinities and infinitesimals and   $\ supp(a)=\{a_0\}\in {\mathbb{R}^n}$ then  $\ |\langle u\mid exp(iA)v\rangle|^2 $ has a mean value (similar to the case $\ n=2$).   One can use  Phasor addition to reduz this sum and be able to see if $\ |\langle u\mid exp(iA)v\rangle|$ has a mean value. Probably $\ \{\lambda_i-\lambda_j \ :\  \lambda_i ,\lambda_j  \in Spec(A)\}$ consisting  of pure infinities and infinitesimals generically  imply all other conditions.
 
 The occurrence of phasor addition and the relations $\  a=(a_1,\cdots, a_n)\in ({\overline{\mathbb{C}}_{as}})^n$, $ \ \sum\limits_i a_i=0$, $\  \|a\|^2 <1$ in Equation~\ref{phasor}  might be of interest. We shall call it the {\it  perfect destructive  interference condition}.

 We shall now prove that the perfect destructive interference condition also holds  for  the transition probability of normal matrices. In fact, let $\ U\in M_n(\overline{\mathbb K})$ be a normal matrix, i.e., $\ UU^*=U^*U$. We may write $\ U = A+iB$ where  $\ A=\frac{1}{2}(U+U^*)$ and $\ B=\frac{1}{2i}(U-U^*)$ are selfadjoint commuting matrices. Let $\ M$ be a unitary matrix such that $ MAM^*=D_1$ is a diagonal matrix, $\ \lambda\in Spec(A)$ and $\ W= V_{\lambda}$. Since $\ W$ is generated by orthogonal free vectors,   it is in fact generated by some of the columns of $\ M$,  it is a free  $\ \overline{\mathbb{K}}-$module.   Since $\ AB =BA$, it follows that $ \ B$ leaves $\ W$ invariant and  $\ B_{|_W}$  is a symmetric operator.   $\ W$ being  a free  $\ \overline{\mathbb{K}}-$module and $\ B_{|_W}$  being  symmetric, implies that  we may decompose $\ W$  as a direct  sum of eigen modules of $\ B$. Hence, modifying some  columns of $\ M$ if necessary, we may suppose that $ \ MBM^* =D_2$ is also a diagonal matrix and thus $ \ MUM^*=D=diag(\lambda_1,\cdots, \lambda_n)$ is diagonal with  $\ MM^*=Id$. Let $\ u, v$ be orthogonal vectors of $\ {\mathbb K}^n$, then $\ \langle exp(iU)u\mid v\rangle = \langle M^*exp(iD)Mu\mid v\rangle=\langle exp(iD)Mu\mid Mv\rangle=\sum\limits_k exp(i\lambda_k)g_k\overline{f_k}$, with  $\ \sum\limits_k g_k f_k = \langle Mu\mid Mv\rangle =\langle u\mid v\rangle =0$, proving the claim. It follows that the results of Section~\ref{finitedim} also holds for normal matrices.   In \cite{unpublish, transition} the following expression is considered: 
 
 $$\bigg|\frac{1}{\sqrt{6}}\bigg[\exp\bigg(\frac{i}{\varepsilon}\bigg)-2\exp\bigg(\frac{i}{\varepsilon^2}\bigg)+(1+\varepsilon)\exp\bigg(\frac{i}{\sqrt{\varepsilon}}\bigg)\bigg]\bigg|$$
 
 \noindent Since this expression  does not satisfy the perfect destructive  interference condition, it does not represent a transition probability coming from a normal matrix.

 \begin{theorem}
 \label{dominado1}    
 Let $\  f=\sqrt{\sum\limits_i g_i \cdot (f_i\circ\mu_i)^2\ }$, where $ g=(g_1,\cdots, g_n)\in ({\overline{\mathbb{C}}_{as}})^n$ or $\ |g_i|\leq C^2 \ \forall \ i$ for some real constant $\ C>0$, the $\ \mu_i$'s are  pure infinities or  infinitesimals  and the  $\ f_i$'s are a.p.  functions.  Then $\ supp(\nu(f))\neq \emptyset$.
 \end{theorem}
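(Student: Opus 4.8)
The plan is to follow the template of Theorem~\ref{dominado}: I will show that $\nu(f)$ admits a representative net that is bounded in modulus by a fixed real constant, and then invoke the very definition of the support of a generalized number to conclude that this support is non-empty. Recall that, writing $f=[(f_\varepsilon)]$ with $f_\varepsilon=\sqrt{\sum_i g_i(\varepsilon)\,f_i(\mu_i(\varepsilon))^2}$, one has $\nu(f)=\big[\eta\mapsto\tfrac1\eta\int_0^\eta f_\varepsilon\,d\varepsilon\big]$, so it suffices to produce a uniform bound on $|f_\varepsilon|$ for $\varepsilon$ near $0$.

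First I would bound each factor. By the Fundamental Theorem of A.P.F.\ (Theorem~\ref{fund}) every a.p.f.\ $f_i$ is a bounded function, say $\|f_i\|_\infty=:N_i<\infty$; moreover the composite $f_i\circ\mu_i$ is well defined whether $\mu_i$ is a pure infinity ($\mu_i(\varepsilon)\to\infty$) or an infinitesimal ($\mu_i(\varepsilon)\to 0$), and in either case $|f_i(\mu_i(\varepsilon))|\le N_i$ for all $\varepsilon$. Next I would bound the coefficients: in the first alternative $g_i\in\overline{\mathbb{C}}_{as}$ is associated to a standard constant, hence $|g_i(\varepsilon)|$ is bounded for $\varepsilon$ small; in the second alternative $|g_i|\le C^2$ is given outright. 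In both situations there is a real constant $B_i$ with $|g_i(\varepsilon)|\le B_i$ for $\varepsilon$ near $0$.

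Combining these estimates gives, for $\varepsilon$ sufficiently small, $\big|\sum_i g_i(\varepsilon)f_i(\mu_i(\varepsilon))^2\big|\le\sum_i B_iN_i^2=:K$, a fixed real constant, and therefore $|f_\varepsilon|\le\sqrt{K}$, where I take moduli so as to cover the case in which the radicand is genuinely complex. Passing to the running average yields $|\nu(f)(\eta)|\le\tfrac1\eta\int_0^\eta|f_\varepsilon|\,d\varepsilon\le\sqrt{K}$ for all small $\eta$, so $\nu(f)$ is moderate with a representative net confined to the disc of radius $\sqrt{K}$. By Bolzano--Weierstrass there is a sequence $\eta_n\to 0$ along which $\nu(f)(\eta_n)$ converges to some $q_0$ with $|q_0|\le\sqrt{K}$; by the definition of the support of a point this $q_0$ lies in $supp(\nu(f))$, whence $supp(\nu(f))\neq\emptyset$.

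The argument is short precisely because the substantive input --- that an almost periodic function is bounded --- is supplied by Theorem~\ref{fund}, so there is no delicate cancellation or mean-value computation to perform; unlike the determination of the exact transition probabilities in the examples, here only the existence of a limit point of a bounded net is at stake. The one step requiring genuine care is the passage through moduli: when the $g_i$ are truly complex the bound $|f_\varepsilon|\le\sqrt{K}$ and the ensuing compactness argument must take place in $\overline{\mathbb{C}}$ rather than $\overline{\mathbb{R}}$, using the extension of the notion of support to $\overline{\mathbb{C}}^n$ recorded in Section~2. I would therefore state the boundedness estimate for $|f_\varepsilon|$ explicitly before invoking compactness, to make clear that the selection of the convergent subsequence is of the net of complex numbers $\big(\nu(f)(\eta)\big)$ and not of its real and imaginary parts separately.
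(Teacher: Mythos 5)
Your proof is correct, and its skeleton is the same as the paper's: bound the radicand, conclude that the averaged net $\eta\mapsto\frac1\eta\int_0^\eta f$ stays in a fixed compact set, and extract a limit point, which by the definition of $supp$ is an element of $supp(\nu(f))$. The difference is in how the bound is obtained. The paper writes $f=|f|=\sqrt{\big|\sum_i g_i(f_i\circ\mu_i)^2\big|}\le C\sum_i|f_i\circ\mu_i|$ and then quotes Theorem~\ref{meanval1} and Theorem~\ref{dominado}, i.e.\ it routes the boundedness of the averages through the convergence $\frac1\eta\int_0^\eta f_i(\mu_i(\epsilon))\,d\epsilon\to M(|f_i|)$ and the estimate $2M(|f_i|)+1$; strictly speaking Theorem~\ref{dominado} is stated for a single dominating a.p.f.\ composed with a single pure infinity, so its application to the sum $\sum_i|f_i\circ\mu_i|$ with distinct $\mu_i$'s is really an application of its proof rather than of its statement. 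You instead use only the sup-norm bound $\|f_i\|_\infty<\infty$ supplied by Theorem~\ref{fund}, which makes the argument self-contained, works uniformly for the pure-infinity and infinitesimal cases, and sidesteps the mean-value machinery entirely; the price is that you get the cruder bound $\sqrt{K}$ rather than one tied to the mean values, but for the mere non-emptiness of the support that costs nothing. Your explicit remark that the compactness argument takes place in $\overline{\mathbb{C}}$ (taking the modulus of the radicand) is a point the paper passes over with the silent identification $f=|f|$.
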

 
 \begin{proof} We have that $\  f=|f|=\sqrt{\bigg|\sum\limits_i g_i \cdot (f_i\circ\mu_i)^2\bigg|\  }\leq C\bigg(\sum\limits_i |f_i\circ\mu_i|\bigg)$. The result follows by  Theorem~\ref{meanval1} and  Theorem~\ref{dominado}.
  \end{proof}
  
  It follows from the previous theorem  that $\bigg|\frac{1}{\sqrt{6}}\bigg[\exp\bigg(\frac{i}{\varepsilon}\bigg)-2\exp\bigg(\frac{i}{\varepsilon^2}\bigg)+(1+\varepsilon)\exp\bigg(\frac{i}{\sqrt{\varepsilon}}\bigg)\bigg]\bigg|$ has generalized transition probabilities and the same   holds for every normal matrix $\ A$ whose eigenvalues  are either pure infinities or infinitesimals. The last assertion follows from Equation~\ref{phasor}, Theorem~\ref{dominado} and Theorem~\ref{dominado1}.  With this, we gave an affirmative answer for the existence of generalized transition probabilities in the finite dimensional case. The transition probability defined by Colombeau-Gsponer exists if and only the support of the generalized transition probabilities has a unique element.

\vspace{0.5cm}

\noindent {\bf{Acknowledgements:}} This paper is part of the second author Ph.D.  thesis done under the guidance of the first author. He is grateful to  IME-USP and the Centro de Ci\^{e}ncias de Balsas-UFMA for their hospitality and the possibility to carry out his Ph.D. as programmed. The first author is grateful to J. Aragona and J.F. Colombeau for previous collaborations resulting in \cite{unpublish} and \cite{transition} and to J.F. Colombeau for introducing him into the subject contained in  \cite{Colomarxiv, sponer}.


\begin{thebibliography}{<50>}

\bibitem{antos}  Antosik, P.,  Mikusinski, J., Sikorski, R., {\it Theory of Distributions: The sequential approach},  Elsevier, 1957.
\bibitem{AraBia}  Aragona, J.,   Biagioni. H.A., {\it An intrinsic definition of the Colombeau algebra of generalized functions}, Analysis Mathematica 17,1991, p. 75-132.
\bibitem{Varna}  Aragona, J.,  Catuogno, P.,   Colombeau,  J.F.,  Juriaans,  S.O.,    Olivera, Ch., {\it  Multiplication of distributions in Mathematical Physics},  V.K. Dobrev eds Lie Theory and its Applications in Physics. Springer Proceedings in Math. and Statistics 191, 2017.
\bibitem{AraFerJu}  Aragona, J.,   Fernandez, R.,   Juriaans, S.O., {\it  A discontinuous Colombeau differential calculus},  Monatsh. Math. 144, 2005,pp. 13-29.

\bibitem{unpublish} Aragona, J., Colombeau, J.F., Juriaans, S.O., {\it Multiplication of distributions and nonperturbative  transition probabilities in QFT}, unpublshed (2017).


\bibitem{transition}  Aragona, J.,  Colombeau, J.F., Catuogno, P.,  Juriaans, S.O., Olivera, C.,   {\it Multiplication of Distributions and Nonperturbative Calculations of Transition Probabilities}, Quantum Theory And Symmetries, 393--401, 2017, Springer.

\bibitem{OJRO} Aragona, J., Fernandez, R., Juriaans, S. O., 
  Oberguggenberger, M., {\it Differential calculus and integration of
    generalized functions over membranes}, Monatshefte 
   f\''{u}r Mathematik (2012).
   
   \bibitem{afj2}
Aragona, J.,  Fernandez, R., Juriaans, S.O., 
\newblock {\it The sharp topology on the full Colombeau algebra of generalized functions}
\newblock  (2005), Integral Transforms and Special Functions vol. 17, Nos. 2-3, February-March 2006, 165-170.

 
 \bibitem{afj1} Aragona, J., Fernandez, R., Juriaans, S.O., {\it Natural topologies on Colombeau algebras},  Topol. Methods Nonlinear Anal. 34(1), 161-180 (2009)


 
\bibitem{agj} Aragona, J., Garcia, A.~R.~G.,  Juriaans, S.~O.,
  {\it Algebraic theory of Colombeau's generalized numbers}, Journal
  of Algebra 384 (2013),  194-211. 


\bibitem{agj1} Aragona, J., Garcia, A.~R.~G.,  Juriaans, S.~O.,
  {\it Generalized solutions of nonlinear parabolic equation with
    generalized functions as initial data}, Nonlinear Analysis 71(2009),
   5187-5207.


\bibitem{AJ}
Aragona, J., Juriaans, S.O., 
\newblock {\it Some Structural Properties of the Topological Ring of Colombeau Generalized Numbers}
\newblock Comm. Alg. 29(5), (2001), 2201-2230.


   
   \bibitem{JJO} Aragona, J., Juriaans, S.~O., Oliveira, O.~R.~B, 
  Scarpal\'{e}zos, D., \emph{Algebraic and geometric theory of the
    topological ring of Colombeau generalized functions},
Proceedings of the Edinburgh Mathematical Society (Series 2) 51 (2008),
545-564.


\bibitem{AS}
Aragona, J.,  Soares, M., 
\newblock {\it An Existence Theorem for an Analytic first order PDE in the Framework of Colombeau's theory}
\newblock  Monatsh. Math. 134, (2001), 9-17.
 
\bibitem{Besi}   Besicovitch, A.S., {\it  Almost Periodic Functions},  Cambridge University Press, 1954.


\bibitem{bhatia}  Bhatia, R., {\it Perturbation bounds for matrix eigenvalues},  SIAM, 2007.
\bibitem{bhatiainf}  Bhatia, R.,  {\it The Hoffman-Wielandt inequality in infinite dimensions}, Proceedings of the Indian Academy of Sciences-Mathematical Sciences, vol.104(3), 483--494, 1994. 

\bibitem{Bia}   Biagioni, H.A.,  {\it A Nonlinear Theory of Generalized Functions} Lecture Notes in Math. 1421. Springer, 1990.

 
 \bibitem{Bohr}   Bohr, H., {\it  Almost Periodic Functions},  Chelsea Pub Company, New York, 1947.
 
\bibitem {chris}   Christyakov, V.V., {\it  The Colombeau generalized nonlinear analysis and the Schwartz linear distribution theory}, J. Math. Sci. 93, 1999, p. 3-40.

\bibitem{Colom1}  Colombeau,  J.F., {\it  A multiplication of distributions},  J. Math. Anal. Appl. 94, 1, 1983, p. 96-115.
\bibitem{Colom2}  Colombeau,  J.F., {\it  A general multiplication of distributions},  Comptes rendus Acad. Sci. Paris 296,1983, pp. 357-360.
\bibitem{Colom3} Colombeau,  J.F., {\it New Generalized Functions and Multiplication of Distributions},  Noth-Holland-Elsevier, 1984. 
\bibitem{Colom4} Colombeau,  J.F., {\it Elementary Introduction to New Generalized Functions},   Noth-Holland-Elsevier, 1985. 
 
 \bibitem{Colom5} Colombeau,  J.F., {\it  Multiplication of Distributions}, Lecture Notes in Math. 1532, SpringerVerlag, Berlin-Heidelberg-New York, 1992.

\bibitem{Colom6} Colombeau,  J.F., {\it Multiplication of distributions},  Bull. Amer. Math. Soc., 23, 2, 1990, p. 251-268.
\bibitem{Colom7} Colombeau,  J.F., {\it  Nonlinear generalized functions.}, Sao Paulo J.  Math. Sci. 7, 2013, 2,pp; 201-239.

\bibitem{Colom8} Colombeau,  J.F., {\it  Mathematical problems on generalized functions and the canonical Hamiltonian formalism},  arXiv.org 0708.3425.


 \bibitem{Colom9} Colombeau, J.F.,
\newblock   New Generalized Functions and Multiplication of Distributions, 
\newblock {\it North Holland}, Amsterdam 1984.


 \bibitem{Colomarxiv}  Colombeau, J.F., {\it Mathematical problems on generalized functions and the canonical Hamiltonian formalism}, arXiv,  preprint arXiv:0708..3425,  2007.

\bibitem{sponer1} Colombeau, J.F.,  Gsponer, A.,  {\it The Heisenberg-Pauli canonical Hamiltonian formalism of quantum field theory in the rigorous mathematical setting of nonlinear generalized functions} (part 1),  arXiv.org 0807.0289. 2008.

 
\bibitem {Egorov}  Egorov, Y., {\it A theory of generalized functions},  Russian Math. Surveys 45, 5, 1990, p. 1-49.

 
 
	
 \bibitem{walter} Garcia, A.R.G.,  Juriaans, S.O.,  Rodrigues, W.,  Silva, J.C.,   {\it Off-Diagonal Condition},  Monatsh. Math., to appear (2022).
 
   \bibitem{spjornal} Garcia, A.R.G., Juriaans, S. O., Oliveira, J., Rodrigues, W. M., {\it  A Non-linear Theory of Generalized Functions}, S\~ao Paulo Journal of Mathematical Sciences, Springer, Published online (2021).
   
   
   
\bibitem{walter1}   Garcia, A.R.G., Juriaans, S. O., Oliveira, J., Rodrigues, W. M., {\it Sets of Uniqueness for holomorphic Nets }, ResearchGate,   DOI: 10.13140/RG.2.2.36425.98405, 2020.

\bibitem{garetto} Garetto, C.,   {\it Topological Structures in Colombeau Algebras: Topological $\tilde{\mathbb C}$-modules and Duality Theory}, Acta Applicandae Mathematica, vol.88(1), 81-123, 2005. 

\bibitem{garetto1} Garetto, C.,  Vernaeve, H.,   {\it {Hilbert  ${\tilde{\mathbb C}}$-modules: structural properties and applications to variational problems}, Transactions of the American Mathematical Society,  vol.363(4), 2047-2090, 2011.


\bibitem{gkos1} Grosser, M.,   Farkas, E.,   Kunzinger, M.,  Steinbauer, R., {\it On the Foundations of Nonlinear Generalized Functions I and II},  Memoirs of the AMS 153, 2001.

\bibitem{gkos2} Grosser, M.,   Farkas, E.,   Kunzinger, M.,  Steinbauer, R.,  {\it Geometric Theory of Generalized Functions with Applications to General Relativity},  Kluwer 2001.


\bibitem{gkos3} Grosser, M., Kunzinger, M., Oberguggenberger, M., Steinbauer, R.,
\newblock {\it Geometric Theory of Generalized Functions with Applications to General Relativity, }
\newblock  Kluwer Acad. Publ. vol 537, 2001.
 
\bibitem{gkos4} Grosser M, Kunzinger M., Steinbauer R., Vickers J.A.,
\newblock {\it  A Global Theory of Algebras of Generalized Functions}, 
\newblock  Adv. Math., 166 (1), (2002), 50-72.



\bibitem {sponer} Gsponer, A., {\it A concise introduction to Colombeau generalized functions and their application in classical electrodynamics},  European J. Phys. 30, 2009, 1, pp. 109-126.

 

\bibitem{hosk}  Hoskins, R.F.,    Sousa Pinto, J., {\it  Distributions, Ultradistributions and other Generalized Functions},  Ellis-Horwood, New York-London, 1994.


\bibitem{hoffman} Hoffman, A.J,  Wielandt, H.W,  {\it The variation of the spectrum of a normal matrix}, Duke Math. J. (20), 1, 37-39, 1953.
 

\bibitem{horman1}  Hormann, G., Konjik, S.,  Kunzinger, M.,  {\it Symplectic modules over Colombeau-generalized numbers}, Comm. Algebra, 42(8), 3558--3577, 2014.


 \bibitem{juroliv} Juriaans, S.O.,   Oliveira, J.,  {\it Fixed Point Theorems for Hypersequences and the Foundation of Generalized Differential Geometry I: The Simplified Algebra}, arXiv preprint arXiv:2205.00114, 2022, DOI: 10.13140/RG.2.2.32732.05761.
         
\bibitem{inter}  Juriaans, S.O., {\it A Fixed Point Theorem for Internal Sets},  Research Gate, DOI: 10.13140/RG.2.2.16367.30884, 2022.

\bibitem{kastler}  Kastler, D., {\it  Introduction a l'Electrodynamique Quantique.}, Dunod, Paris, 1961.
 

   
 
 \bibitem{ku}
Kunzinger, M.,
\newblock {\it Lie Transformation Groups in Colombeau algebras,}
\newblock  Doctoral Thesis, University of Viena, 1996.


\bibitem{ko}
Kunzinger, M., Oberguggenberger M.,
\newblock {\it Characterization of Colombeau Generalized Functions by their Point Value,}
\newblock  Math. Nachr. 203, (1999), 147-157.


 \bibitem{ku1}  Kunzinger, M., Oberguggenberger, M., {\it  Characterization of Colombeau generalized functions by their point values},  Math. Nachr. 203, 147-157 (1999)


\bibitem{ku2} Kunzinger, M.,  Steinbauer, R., {\it A rigorous solution concept for geodesics and geodesic deviation equations in impulsive gravitational waves}, J. Math. Phys. 40 (1999), 1479-1489.


\bibitem{ku3} Farkas, E., Grosser, M, Kunzinger, M.,  Steinbauer, R., {\it On the foundations of nonlinear generalized functions I, II}, Mem. Am. Soc. 153(729) (2001).

\bibitem{ku4} M, Kunzinger, M.,  Steinbauer, R., {\it Generalized Pseudo-Riemannian Geometry}, T.A.M.S., Vol. 354, Number 10, 4179-4199.




\bibitem{mayer} Mayerhofer, E. , {\it On Lorentz geometry in algebras of generalized functions},  Proc. R. Soc. Edinb. Sect. A, Math., 138(4):843-871, 2008.

 
\bibitem{Nedel}  Nedeljkov,  M.,  Pilipovic, S.,  Scarpalezos. D., {\it The Linear Theory of Colombeau Generalized Functions},  Pitman Research Notes in Math. 1998.

 
\bibitem{mog}
 Oberguggenberger, M.
\newblock {\it Multiplication of Distributions and Applications to Partial Differential Equations},
\newblock  Pitman, 1992.

\bibitem{opd}
Oberguggenberger, M.,  Pilipovic S.,  Scarpalezos D.,
\newblock {\it Local Properties of Colombeau Generalized Functions},
\newblock  Math. Nachr. 256, (2003), 88-99.



\bibitem{op}
Oberguggenberger,  M.,  Pilipovic S., Valmorin V., 
\newblock {\it Global Representatives of Colombeau holomorphic Generalized Functions}, 
\newblock  preprint,  2005.
 
 
\bibitem{OV}   Oberguggenberger, M., Vernaeve, H.,  {\it  Internal sets and internal functions in Colombeau theory},  J. Math. Anal. Appl. 341, 649-659 (2008).


\bibitem{Ober}  Oberguggenberger, M., {\it Multiplication of Distributions and Partial Differential Equations},  Pitman Research Notes in Math. 259, Longman, Harlow, 1992.
\bibitem{Penrose}   Penrose, R., {\it  The Road of Reality. A Complete Guide to the Laws of the Universe}  Jonathan Cape, 2004.


 
  \bibitem{rob} Robinson, A., {\it Non-standard Analysis}, North-Holland, Amsterdam, (1966).
  

\bibitem{ros1} Rosinger, E.~E.,  {\it Distributions and nonlinear
     partial differential equations}, Lect. Notes Math. 684, Springer,
   Berlin (1978).

\bibitem{ros2} Rosinger, E.~E., {\it Nonlinear partial differential
     equations. Sequential and weak solutions}, North Holland,
   Amesterdam (1980).

\bibitem{ros3} Rosinger, E.~E.,  {\it Generalized solutions of
     nonlinear partial differential equations}, North Holland,
   Amesterdan (1987).
   
\bibitem{ros4} Rosinger, E.~E.,  {\it Which are the maximal ideals}, Prespacetime Journal, vol. 2,  Issue 2, pg.  144-156, 2011. 

 \bibitem{ros5} Rosinger, E.~E., \emph{Non-linear partial differential
     equations. An algebraic view of generalized solutions}, North
   Holland, Amestedam (1990).
   


 
\bibitem{S1}  Scarpalezos, D.,  {\it Topologies dans les espaces de nouvelles fonctions generalis\'ees de Colombeau. ${{\overline{\mathbb{C} }}}$ topologiques}, 
  Universit\'e Paris 7, 1993

\bibitem{S2}  Scarpalezos, D.,  {\it Colombeau's generalized functions: topological structures micro local properties. A simplified point of view,}  CNRS-URA212,
 Universit\'e Paris 7, 1993.
 
 \bibitem{S3} Scarpalezos, D., \emph{Colombeau's generalized functions: Topological structures; Microlocal properties. A simplified point of
view. Part I}, Bull. Cl. Sci. Math. Nat. Sci. Math 121 (25) (2000), 89-114.


\bibitem{schmudgen} Schm\"{u}dgen, K., {\it Unbounded Self-adjoint Operators on Hilbert Space}, Graduate Texts in Mathematics 265, Springer Verlag, 2012.

\bibitem{Schwartznote}   Schwartz, L., {\it  Sur l 'impossibilit\'e de la multiplication des distributions},  Comptes Rendus Acad. Sci. Paris 239, 1954, p. 847-848.

\bibitem{Schwartzlivre}  Schwartz, L., {\it   Th\'eorie des Distributions}, Hermann, Paris, 1966.
\bibitem{SteinVi}   Steinbauer, S., Vickers, J.A.,   {\it  The use of Generalized Functions and Distributions in General Relativity},  Class. Quant. Grav. 23, pp. R91-113, 2006.
\bibitem{Vi} Vickers, J.A., {\it Distributional geometry in general relativity},  J. Geom. Phys. 62,3,2012, p. 692-705.


  \bibitem{tod} Todorov, T.D., {\it Pointwise Value  and Fundamental Theorem in the Algebra of Asymptotic Functions}, arXiv (2006). 

\bibitem{todver} Todorov, T.D., Vernaev, H., {\it Full algebra of generalized functions  and non-standard asymptotic analysis}, Log Anal, (2008) 205-234.



\bibitem{Weinberg}  Weinberg, S. {\it  The Quantum Theory of Fields} Vol. 1, Cambridge, 1995.

 
  
  \bibitem{ver}  Vernaeve, H., {\it  Ideals in the ring of Colombeau generalized numbers}, 
Communications in Algebra, 38, p 2199 - 2228, 2010.

 

\bibitem{ver2}   Vernaeve, H., {\t Banach $\widetilde{\mathbb C}$-algebras}  https://doi.org/10.48550/arxiv.0811.1742}, arXiv, 2008.

		
\bibitem{V1} Vernaeve, H.,  {\it  Optimal embeddings of Distributions into Algebras}, 
Proc. Edinburgh Math. Soc. (2003) 46, 373-378.
		

\bibitem{V2} Vernaeve, H.,  {\it   Isomphisms  of Algebras of Colombeau Generalized Functions}, Monatsh. Math. 162, 225-237 (2011).
 
 \bibitem{V3} Vernaeve, H.,  {\it   Banach $\widetilde{\mathbb{C}}$-algebras}, arXiv, 0811.1742v1[mathFA] 11 Nov (2008).
 

 \bibitem{wolfram} Wolfram, S., {\it A New Kind of Science}, Wolfram Media, Inc., 2002.


\end{thebibliography}
\end{document}